\tikzset{cross/.style={cross out, draw=black, minimum size=2*(#1-\pgflinewidth), inner sep=0pt, outer sep=0pt}, cross/.default={2pt}}
\renewcommand\section{\@startsection{section}{1}%
  \z@{-.5\linespacing\@plus-.7\linespacing}{.5\linespacing}%
  {\normalfont\scshape\centering}}
\renewcommand\subsection{\@startsection{subsection}{2}%
  \z@{-.5\linespacing\@plus-.7\linespacing}{.5\linespacing}%
  {\normalfont\bfseries}}
\renewcommand\subsubsection{\@startsection{subsubsection}{3}%
  \z@{.5\linespacing\@plus.7\linespacing}{-.5em}%
  {\normalfont\itshape}}
\newtheorem{thm}{Theorem}[section]
\newtheorem{cor}[thm]{Corollary}
\newtheorem{lem}[thm]{Lemma}
\newtheorem{prop}[thm]{Proposition}
\theoremstyle{definition}
\newtheorem{remark}{Remark}
\numberwithin{equation}{section}
\newcommand{\re}{\mathop{\mathrm{Re}}}
\newcommand{\im}{\mathop{\mathrm{Im}}}
\newcommand{\fdet}[2][]{{\mathrm{det}}_{#1}\!\left(#2\right)}
\newcommand{\prob}[2][]{\mathscr P_{#1}\! \ifstrempty{#2}{}{\left\{#2\right\}}}
\newcommand{\myexp}[1]{\exp{\!\left( {#1} \right)}}
\newcommand{\GammaF}[1]{\Gamma\! \left(#1\right)}
\newcommand{\R}{\mathbb{R}}
\newcommand{\C}{\mathbb{C}}
\newcommand{\Rpl}{\mathbb{R}_{>0}}
\newcommand{\Y}{\mathbb{Y}}
\newcommand{\T}{\mathbb{T}}
\newcommand{\A}{\mathbb{A}}
\newcommand{\N}{\mathbb{N}}
\newcommand{\Conf}[1]{\mathrm{Conf}\left(#1\right)}
\newcommand{\RSK}{\mathrm{RSK}}
\newcommand{\df}{\overset{\mathrm{def}}{=}}
\newcommand{\Hardedge}{\mathscr{H}}
\newcommand{\Trunc}{\mathscr{T}}
\newcommand{\TruncLog}{\mathscr{T}^{\mathrm{log}}}
\newcommand{\Crit}{\mathfrak{C}}
\newcommand{\Time}{\mathfrak{T}}
\newcommand{\Gf}[5]{G^{#1}_{#2}\!\left(
    \begin{matrix}
      #3\\
      #4
    \end{matrix}
    \biggl| #5\right)
}
\newcommand{\Type}[1]{\mathrm{Type}\left(#1\right)}
\newcommand{\Row}[1]{\mathrm{Row}\left(#1\right)}
\newcommand{\Col}[1]{\mathrm{Col}\left(#1\right)}
\newcommand{\Shape}[1]{\mathrm{Sh}\left(#1\right)}
\newcommand{\Schur}{\mathcal{S}}
\newcommand{\Tabl}[1]{\mathcal{#1}}
\newcommand{\Path}[1]{\mathrm{#1}}
\newcommand{\Geom}{\mathbf{Geom}}
\newcommand{\Exp}{\mathbf{Exp}}
\newcommand{\limntoinf}{\underset{n \to \infty}{\longrightarrow}}
\newcommand{\limnutoinf}{\underset{\nu \to \infty}{\longrightarrow}}
\newcommand{\limalptoinf}{\underset{\alpha \to \infty}{\longrightarrow}}
\newcommand{\limalptoinffd}{\underset{\alpha \to \infty}{\overset{\mathrm{fd}}{\longrightarrow}}}
\renewcommand{\kappa}{\varkappa}
\renewcommand{\phi}{\varphi}
\renewcommand{\epsilon}{\varepsilon}
\begin{document}
\title{Last-passage percolation and product-matrix ensembles}
\author{Sergey Berezin}
\address{(Sergey Berezin) Department of Mathematics, Katholieke Universiteit Leuven, Celestijnenlaan 200 B, bus 2400, Leuven B-3001, Belgium; St. Petersburg Department of V.A. Steklov Mathematical Institute of RAS, Fontanka 27, St. Petersburg, 191023, Russia}
\email{sergey.berezin@kuleuven.be, berezin@pdmi.ras.ru}
\author{Eugene Strahov}
\address{(Eugene Strahov) Department of Mathematics, The Hebrew University of Jerusalem, Givat Ram, Jerusalem 91904, Israel}
\email{strahov@math.huji.ac.il}
\keywords{Last-passage percolation, products of random matrices, determinantal processes, critical kernel}
\begin{abstract}
	We introduce and study a model of directed last-passage percolation in planar layered environment. This environment is represented by an array of random exponential clocks arranged in blocks, for each block the average waiting times depend only on the local coordinates within the block. The last-passage time, the total time needed to travel from the source to the sink located in a given block, maximized over all the admissible paths, becomes a stochastic process indexed by the number of blocks in the array. We show that this model is integrable, particularly the probability law of the last-passage time process can be determined via a Fredholm determinant of the kernel that also appears in the study of products of random matrices. Further, we identify the scaling limit of the last-passage time process, as the sizes of the blocks become infinitely large and the average waiting times become infinitely small. Finite-dimensional convergence to the continuous-time critical stochastic process of random matrix theory is established.
\end{abstract}

\maketitle
\tableofcontents
\section{Introduction}
Within the last several decades, random matrix theory has become a universal tool for explaining a broad and ever-growing range of phenomena in combinatorics, statistical mechanics, and other areas of research. The Airy stochastic process, introduced by Pr\"{a}hofer and Spohn~\cite{PrahoferSpohn}, serves as a canonical example of such universality. While this process can be thought of as the top curve in a particular scaling limit of the Dyson Brownian motion---which is ultimately related to the Gaussian unitary ensemble---its importance extends far beyond. Indeed, the same process also emerges in the asymptotic study of seemingly unrelated models such as tandem queues in queuing theory, last-passage percolation and polynuclear growth in integrable probability, and others. We refer the reader to Quastel and Remenik~\cite{QuastelRemenik} for a review about the Airy stochastic process, to Johansson~\cite{JohansssonDiscretePolynuclearGrowth} for various results on convergence of the discrete polynuclear growth processes to the Airy stochastic process, and to the book by Baik, Deift, and Suidan~\cite{BaikDeiftSuidanBook}, where several problems in combinatorics and statistical mechanics are analyzed with the tools of random matrix theory.

One of the  recent achievements in studying random matrices is the discovery that the product-matrix ensembles are integrable, see Akemann and Burda~\cite{Akemann1}, Akemann, Kieburg, and Wei~\cite{AkemannKieburgWei}, and Akemann, Ipsen, and Kieburg~\cite{AkemannIpsenKieburg}. For example, recall that a Ginibre matrix has standard i.i.d. complex Gaussian variables as its entries, and consider a sequence of independent copies of such matrices~$\left\{G_k \right\}_{k \in \N}$, the size of~$G_k$ being~$(n+\nu_k-1)\times(n+\nu_{k-1}-1)$, where $\nu_0=1$ and~$\nu_k \geq 1$ for~$k \in \N$. Then, the squared singular values of the partial product~$Y_k = G_k\times\cdots\times G_1$ form a determinantal point process on~$\Rpl$. In particular, if~$k=1$, one finds the classical Laguerre unitary ensemble. The discovery of integrability motivated one of the present paper's authors to introduce a class of processes called \textit{(multi-time) product-matrix point processes}, see Strahov~\cite{StrahovD}. To give an example of such a process, we denote by~$y_j^{(k)}$, $j=1,\ldots,n$, the squared singular values of~$Y_k$ defined above. Then, the random set of all pairs~$(k,\lambda^{(k)}_j)$, where~ $j=1, \ldots,n$ and $k \in \N$, forms a determinantal point process on~$\N \times \Rpl$ called the \textit{Ginibre product-matrix point process}. We note that product-matrix point processes on~$\N \times \Rpl$ can be viewed as time-dependent or \textit{dynamical} point processes on~$\Rpl$, with time represented by natural numbers.

Borodin, Gorin, and Strahov~\cite{BorodinGorinStrahov} found that certain product-matrix point processes can be understood as scaling limits of Schur (and even more general) point processes of combinatorics. In particular, it was shown that the product-matrix point process related to truncated Haar-distributed unitary matrices appears naturally in the study of the Young diagrams associated to random skew plane partitions. We will call this process the \textit{truncated-unitary product-matrix point process}. This find introduced a new angle in understanding why processes originating outside random matrix theory manifest as scaling limits of random matrix models. In the present paper, we will leverage the very same combinatorial connection to study a novel model of directed last-passage percolation, which we named the \textit{directed last-passage percolation in layered environment}.

A general simple directed last-passage percolation model can be described as follows. Consider a lattice featuring two distinguished points, the source and the sink, and a set of independent random clocks, each clock attached to its own vertex. A particle is placed at the source and moves in a prescribed direction towards the sink, being halted at each vertex until the attached random clock expires. The total (random) time to travel from the source to the sink, maximized over all the admissible paths, is called the \textit{last-passage (percolation) time}. We note in passing that the first-passage percolation model, while similar in name, instead operates with the minimum over all the admissible paths and requires very different tools for its analysis.

In our particular version of the general model, we describe the layered environment as an array~$A^{(k)}$ of size~$n \times L_k$ comprised of blocks~$\left(B^{(1)},\ldots, B^{(k)}\right)$ of independent exponential random variables such that each entry~$B_{i,j}^{(\ell)}$ has intensity~$\nu_\ell+i+j-2$, where~$\nu_\ell \ge 1$ and~$\ell=1, \ldots, k$, and represents random clocks attached to the vertex~$(i,j)$ of the lattice~$\mathbb{Z} \times \mathbb{Z}$. The source and the sink are~$(1,1)$ and~$(n, L_k)$, respectively, and the particle is only allowed to make steps increasing only one of its coordinates and exactly by one. Such steps are often depicted as down/right, having in mind that the y-axis of the lattice is flipped to follow the standard indexing of rows and columns in arrays. Note that within each block~$B^{(\ell)}$ the parameter~$\nu_\ell$ is constant, which corresponds to a particular layer of the environment and gives the name to the model. Since the sink is placed in the $k$th block, the last-passage time~$\Time(k)$ becomes a discrete-time stochastic process~$\left(\Time(k), k \in \N\right)$ called the \textit{last-passage time process}.

The first main results of the present paper is Theorem~\ref{thm21}, which provides a formula for the distribution of the last-passage time process~$\Time(k)$ in terms of a Fredholm determinant with respect to the kernel of the truncated-unitary product-matrix point process.

Our second main result is Theorem~\ref{thm22}, which establishes convergence of the appropriately rescaled last-passage time process to what we call the (continuous-time) \textit{critical stochastic process}. Finite-dimensional distributions of this process are governed by a Fredholm determinant with respect to the \textit{extended critical kernel}, which can be understood as a multi-time analogue of the critical kernel defined in Liu, Wang, and Wang~\cite[Equation (1.15)]{LiuWangWang}. The critical kernel arises in the study of the soft edge of the singular spectrum of a product of i.i.d Ginibre matrices under the assumption that the size of the matrices and the number of factors both grow to infinity in such a way that their ratio is a finite strictly-positive constant. Recalling that the Airy stochastic process is connected to the \textit{top curve} of the Dyson Brownian motion and that the last-passage time is related to the \textit{maximum} over all the admissible paths, we can view the critical stochastic process as an analog of the Airy stochastic process. We note that the critical stochastic process and the extended critical kernel are also related to the Brownian motion on~$GL(N,\C)$, as shown in Ahn~\cite{Ahn1, Ahn2}. Additionally, it is worth mentioning that Borodin and P\'{e}ch\'{e}~\cite{BorodinPeche} relate the Airy stochastic process with a scaling limit of a directed percolation model in a quadrant. The book by Baik, Deift, and Suidan~\cite[Section~10.3]{BaikDeiftSuidanBook} describes special cases of the directed last-passage percolation model in terms of the ensemble related to Wishart random matrices and in terms of the Meixner ensemble. More recent papers on integrable models of last-passage percolation are devoted to the two-time distribution in directed last-passage percolation with geometric weights, see Johansson~\cite{JohanssonTwoTimes}, and to the multi-time joint distribution, see Johansson and Rahman~\cite{JohanssonRahman} and references therein. Despite of the fact that last-passage percolation models are fundamental examples of integrable models, to the best of our knowledge, the relation with product-matrix processes has not been observed before.

Our third main result is a series of theorems, Theorem~\ref{thm23}, Theorem~\ref{thm24}, and Theorem~\ref{thm25}. The first theorem is a multi-time generalization of a theorem in Kuijlaars, Zhang~\cite[Theorem~5.3]{KuijlaarsZhang}. We establish convergence of the extended kernel corresponding to the Ginibre product-matrix point process towards the \textit{extended hard-edge kernel}, which generalizes that in~\cite{KuijlaarsZhang}. The second theorem shows that the same extended kernel can be obtained from the extended kernel of the truncated-unitary product-matrix point process. The last theorem shows that the extended critical kernel can be viewed as a scaling limit of the extended hard-edge kernel. This result can be interpreted as a hard-to-soft edge transition. A similar effect is well known for the Bessel kernel as its parameter is sent to infinity, see Borodin and Forrester~\cite[Section~4]{BorodinForrester}. This completes the circle by tying together the percolation in the layered environment and the product-matrix point processes. It is worth pointing out that for a particular set of parameters, the hard-edge (extended) kernel becomes the celebrated Bessel kernel, which governs the Bessel point process. This kernel is also intimately related to directed last-passage percolation, as was first observed by Forrester, and is discussed in Johansson~\cite{JohanssonLastPassagePaper}.

The rest of the paper is organized as follows. In \textbf{Section~\ref{sect_2}}, we introduce the necessary basics about directed last-passage percolation and state two of our main results, Theorem~\ref{thm21} and Theorem~\ref{thm22}, in Subsection~\ref{sect_21} and Subsection~\ref{Section2.2}, respectively. Further, in Paragraph~\ref{sect_231} and Paragraph~\ref{subsect:trunc}, we introduce basic matrix-product point processes, the Ginibre and truncated-unitary product-matrix point process, respectively. We formulate two more of our results, Theorem~\ref{thm23} and Theorem~\ref{thm24} for the hard-edge scaling limit. In Paragraph~\ref{subsubsect:crit_trans}, we give the statement of our last main result, Theorem~\ref{thm25} about a hard-to-soft edge transition to the extended critical kernel. In \textbf{Section~\ref{sect_3}}, we provide background information on the general last-passage percolation problem and introduce last-passage percolation in layered environment. Subsection~\ref{subsect3.1} covers combinatorial (non-stochastic) aspects of last-passage percolation, Subsection~\ref{subsect3.2} focuses on geometrically distributed random clocks, and Subsection~\ref{subsect3.3} on exponentially distributed random clocks. One of the main theorems, Theorem~\ref{thm21}, is proven in Subsection~\ref{sect_34}. In \textbf{Section~\ref{sect_4}}, we prove Theorems~\ref{thm22}--\ref{thm25}. The essential asymptotic tools, such as Lemmas~\ref{lemma44} -- \ref{lemma49} and Proposition~\ref{PropositionMeijFirth}, are introduced in Subsection~\ref{sect_41}. The proofs of Theorem~\ref{thm22} -- \ref{thm25} are given in Subsections~\ref{sect_42} -- \ref{sect_45}, respectively.

\section{Main results}
\label{sect_2}
\subsection{Directed last passage percolation in layered environment}
\label{sect_21}
For~$k \in \N$, introduce an array of random clocks
\begin{equation}
  \label{eq:Barrayblk}
  B^{(k)}=\left(B_{i,j}^{(k)}\right)_{\substack{i=1,\ldots, n\\j = 1,\ldots,\ell_k}}
\end{equation}
of size~$n \times \ell_k$, where~$n, \ell_1, \ell_2, \ldots \in \mathbb{N}$ are natural numbers such that~$\ell_1 \ge n$. Take the entries of the~$B^{(k)}$ to be independent random variables distributed according to the exponential law
\begin{equation}
  \label{eq:exp_laws}
  B_{i,j}^{(k)} \sim \Exp(\nu_k+i+j-2),
\end{equation}
that is,
\begin{equation}
  \prob{B_{i,j}^{(k)}\le x}=1-e^{-\left(\nu_k+i+j-2\right)x},\quad x>0,
\end{equation}
with~$\nu_k \in \N$. Consider a sequence of random arrays
\begin{equation}
  \label{eq:Aarray}
  \renewcommand*{\arraystretch}{1.5}
  A^{(k)}=
  \begin{pmatrix}
    B^{(1)} &\dots &B^{(k)}
  \end{pmatrix},
  \quad k \in \N,
\end{equation}
made up of blocks~$B^{(1)}, B^{(2)}, \ldots$, which are assumed to be independent. Clearly, $A^{(k)}$ is of size~$n \times L_k$, where~$L_k=\ell_1+\ldots+\ell_k$. Note that, even though~$A^{(k)}$ and~$B^{(k)}$ may alternatively be called random matrices, we reserve this term for use in a different context.

We are going to treat the~$A^{(k)}_{i,j}$ as the weights associated with points of~$\mathbb{Z} \times \mathbb{Z}$. Let~$\Pi_{n, k}$ be the collection of all directed paths from~$(1,1)$ to~$(n,L_k)$, going down and to the right (South/East or down/right paths). As it is customary, we flip the y-axis to be consistent with the usual indexing of arrays. Each path~$\Path{p} \in \Pi_{n, k}$ is a sequence,
\begin{equation}
  \mathrm{p}=\left\{\left(i_s,j_s\right)\right\}_{s=1}^{L_k+n-2},
\end{equation}
where~$(i_1,j_1)=(1,1)$, $(i_{L_k+n-2}, j_{L_k+n-2})=(n,L_k)$, and $\left(i_{s+1},j_{s+1}\right)-\left(i_s,j_s\right) \in \{(0,1),(1,0)\}$. An illustration is given in Fig.~\ref{FigureArray}. 
\begin{figure}[ht!]
  \centering
  \begin{tikzpicture}[scale=1.7]
    \begin{scope}
      \draw [->] (-1.5,1.3) -- (-1,1.3) node [yshift=7, xshift=-3]{$j$};
      \draw [->] (-1.5,1.3) -- (-1.5,0.9) node [yshift=7, xshift=-4]{$i$};
      \foreach \y in {-1,-0.5,...,1}{
        \foreach \x in {-1,-0.5,...,2.5}{
          \node[draw,circle,inner sep=1pt,fill] at (\x,\y) {};
        }
        \draw [densely dotted] (2.7,\y) -- (3.3,\y);

	\foreach \x in {3.5,4,...,5}{
          \node[draw,circle,inner sep=1pt,fill] at (\x,\y) {};
        }
      }

      \draw [decorate,decoration={brace,amplitude=8}] (-1.1,1.1) -- (1.1,1.1) node [midway, yshift=15] {$\ell_1$};
      \draw [decorate,decoration={brace,amplitude=8}] (1.4,1.1) -- (2.6,1.1) node [midway, yshift=15] {$\ell_2$};
      \draw [decorate,decoration={brace,amplitude=8}] (3.4,1.1) -- (5.1,1.1) node [midway, yshift=15] {$\ell_k$};
      \draw [decorate,decoration={brace,amplitude=8}] (-1.15,-1.1) -- (-1.15,1.1) node [midway, xshift=-15] {$n$};
      \node at (0.12,0.25) {$B^{(1)}$};
      \node at (2.12,0.25) {$B^{(2)}$};
      \node at (4.4,0.25) {$B^{(k)}$};
      \draw [loosely dotted] (-1.07,-1.07) rectangle (1.07,1.07);
      \draw [loosely dotted] (1.43,-1.07) rectangle (2.57,1.07);
      \draw [loosely dotted] (3.43,-1.07) rectangle (5.07,1.07);
    \end{scope}
    \begin{scope}[compass style/.style={color=black}, color=black, decoration={markings,mark= at position 0.6 with {\arrow{stealth}}}]
      \draw[postaction=decorate] (-1,1) -- (-0.5,1);
      \draw[postaction=decorate] (-0.5,1) -- (-0.5,0.5);
      \draw[postaction=decorate] (-0.5,0.5) -- (0,0.5);
      \draw[postaction=decorate] (0,0.5) -- (0.5,0.5);
      \draw[postaction=decorate] (0.5,0.5) -- (0.5,0);
      \draw[postaction=decorate] (0.5,0) -- (1,0);
      \draw[postaction=decorate] (1,0) -- (1.5,0);
      \draw[postaction=decorate] (1.5,0) -- (2,0);
      \draw[postaction=decorate] (2,0) -- (2,-0.5);
      \draw[postaction=decorate] (2,-0.5) -- (2.5,-0.5);
      \draw (2.5,-0.5) -- (2.61,-0.5);
      \draw (3.39,-0.5) -- (3.5,-0.5);
      \draw[postaction=decorate] (3.5,-0.5) -- (4,-0.5);
      \draw[postaction=decorate] (4,-0.5) -- (4,-1);
      \draw[postaction=decorate] (4,-1) -- (4.5,-1);
      \draw[postaction=decorate] (4.5,-1) -- (5,-1);
    \end{scope}
  \end{tikzpicture}
  \caption{An illustration of directed last-passage percolation in layered environment.}
  \label{FigureArray}
\end{figure}
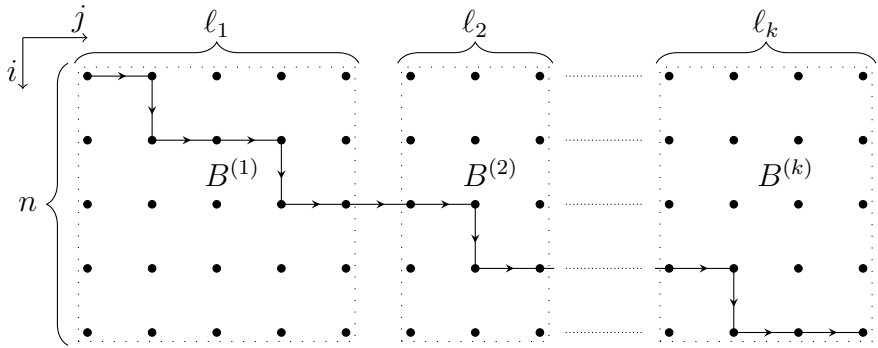
The \textit{last-passage time}~$\Time(k)$ corresponding to the random array~$A^{(k)}$ is defined by
\begin{equation}
  \label{lpp_def}
  \Time(k) \df \max\limits_{\mathrm{p}\in\Pi_{n, k}} \sum\limits_{(i,j)\in\mathrm{p}}A^{(k)}_{i,j},\quad k \in \N.
\end{equation}

One can interpret the quantity~\eqref{lpp_def} as the time, maximized over all admissible paths, that it takes for a particle following South/East paths to reach~$(n,L_k)$ from~$(1,1)$, provided that after jumping to the site~$(i,j)$ this particle experiences a delay of~$A^{(k)}_{i,j}$. The intensity parameter~$\nu_m$ is constant within each block~$B^{(m)}$, $m=1,\ldots, k$, thus we have a layered type of environment, layers corresponding to the blocks~$B^{(m)}$, see Fig.~\ref{FigureArray}.

Our next result gives an explicit formula for the distribution of the \textit{last-passage time process}~$(\Time(k),\ k \in \N)$. Recall that the Meijer $G$-function is defined by
\begin{equation}
  \label{MeijerGFunction}
  \Gf{m,n}{p,q}{a_1, \ldots, a_p}{b_1, \ldots, b_q}{z}=\frac{1}{2\pi i} \int\limits_{\gamma}\frac{\prod\limits_{j=1}^m\GammaF{b_j+\eta}\prod\limits_{j=1}^n\GammaF{1-a_j-\eta}}{\prod\limits_{j=m+1}^q\GammaF{1-b_j-\eta}\prod\limits_{j=n+1}^p\GammaF{a_j+\eta}} z^{-\eta}\, d\eta;
\end{equation}
the choice of the contour~$\gamma$ depends on the parameters~$m,n,p,q$ and can be made in several ways. In particular, all poles of~$\GammaF{b_j+s}$, $j=1,\ldots,m$, should be to the left of~$\gamma$ while those of~$\GammaF{1-a_k-s}$, $k=1,\ldots,n$, to the right. Further details can be found in Luke~\cite{Luke}. We have the following theorem.

\begin{thm}
  \label{thm21}
  Let~$r_1, \ldots, r_q \in \N$ be pairwise distinct. Then, the finite-dimensional distributions of the stochastic process~$\left(\Time(k),\ k \in \N \right)$ associated with the arrays~\eqref{eq:Aarray} of the exponential random variables~\eqref{eq:exp_laws} are given by
\begin{equation}
  \label{lpp_findimkern_fredh}
  \prob{\Time(r_k)\leq s_{k},\ k=1,\ldots,N} = \fdet{I-\chi_f K_{n,\vec{\nu},\vec{\ell}} \chi_f}_{L_2(\{r_1,\ldots,r_N\}\times\Rpl)}, \quad N \in \N,
\end{equation}
where~$s_k \in \Rpl$ and the kernel is given by
  \begin{equation}
    \label{ker_trunc_prod_log}
    \begin{aligned}
	    &K_{n,\vec{\nu},\vec{\ell}} (q,x;r,y) =-\Gf{r-q,0}{r-q, r-q}{\nu_{q+1}+\ell_{q+1}-1,  \ldots, & \nu_{r}+\ell_{r}-1}{\nu_{q+1}-1\hfill, \ldots, & \nu_{r}-1\hfill}{e^{x-y}} \mathbf{1}_{r>q}\\
	    &+\int\limits_{S_{\sigma}} \frac{d\sigma}{2\pi i}\int\limits_{S_{\zeta}^{(n)}} \frac{d\zeta}{2 \pi i}\, \frac{\prod\limits_{j=0}^{r}\GammaF{\nu_j+\sigma}}{\prod\limits_{j=0}^{q}\GammaF{\nu_j+\zeta}} \frac{\prod\limits_{j=0}^{q}\GammaF{\nu_j+\ell_j+\zeta}}{\prod\limits_{j=0}^{r}\GammaF{\nu_j+\ell_j+\sigma}} \frac{e^{- x \zeta + y\sigma }}{\sigma-\zeta},\ \, x,y \in (0,+\infty),
    \end{aligned}
  \end{equation}
  where by definition~$\nu_0=1$ and $\ell_0 = -n$, the contours of integration are specified in Fig.~\ref{contour2}, and~$\chi_f$ is a multiplication by~$f(q,x)$ defined as follows,
\begin{equation}
  \label{eq:thm1_gfunc}
  f(q,x)= \sum \limits_{j=1}^N \delta_{q,r_j} 1_{(s_j,+\infty)}(x) 
\end{equation}
with~$\delta_{q, r_j}$ being the Kronecker delta.
\begin{figure}[ht!]
	\centering
	\begin{tikzpicture}[scale=2.5]
		\begin{scope}[compass style/.style={color=black}, color=black, decoration={markings,mark= at position 0.5 with {\arrow{stealth}}}]

			\draw (-1.9,0) -- (0.625,0);
			\draw[->] (0.875,0) -- (1.875,0);

			\draw (0.765,0) node {$\cdots$};

			\draw (1.5,0) node [cross] {};
			\draw (1,0) node [cross] {};
			\draw (0.5,0) node [cross] {};
			\draw (0,0) node [cross] {};
			\draw (-0.5,0) node [cross] {};
			\draw (-1,0) node [cross] {};
			\draw (-1.5,0) node [cross] {};

			\node at (1.5,-0.075) {\scalebox{0.5}{$n$}};
			\node at (1.0,-0.075) {\scalebox{0.5}{$n-1$}};
			\node at (0.5,-0.075) {\scalebox{0.5}{$1$}};
			\node at (0.05,-0.075) {\scalebox{0.5}{$0$}};
			\node at (-0.535,-0.075) {\scalebox{0.5}{$-1$}};
			\node at (-1.035,-0.075) {\scalebox{0.5}{$-2$}};
			\node at (-1.535,-0.075) {\scalebox{0.5}{$-3$}};

			\draw[->] (0,-0.525) -- (0,0.525);
			\draw[thick, postaction=decorate] (-0.25,0.25) -- (-0.25, -0.25);
			\draw[thick, postaction=decorate] (-0.25,-0.25) -- (1.25,-0.25);
			\draw[thick, postaction=decorate] (1.25,-0.25) -- (1.25, 0.25);
			\draw[thick, postaction=decorate] (1.25,0.25) -- (-0.25,0.25);

			\node at (0.3,+0.4) {$S_{\zeta}^{(n)}$};
			\draw[thick, postaction=decorate] (-0.375,-0.25) -- (-0.375,0.25);
			\draw[thick, postaction=decorate] (-0.375,0.25) -- (-1.75,0.25);
			\draw[thick, postaction=decorate] (-1.75,-0.25) -- (-0.375,-0.25);

			\node at (-1,+0.4) {$S_{\sigma}$};
		\end{scope}
	\end{tikzpicture}  
	\caption{The contour~$S_{\sigma}$ and~$S_{\zeta}^{(n)}$.} 
	\label{contour2}
\end{figure}
\end{thm}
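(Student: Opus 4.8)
The plan is to obtain \eqref{lpp_findimkern_fredh} from the corresponding statement for geometrically distributed clocks, where the Robinson--Schensted--Knuth (RSK) correspondence exposes a determinantal structure, and then to degenerate the geometric model to the exponential one by sending a $q$-parameter to $1$. Fix $q\in(0,1)$ and replace each clock $B^{(m)}_{i,j}\sim\Exp(\nu_m+i+j-2)$ by a geometric variable with $\prob{B^{(m)}_{i,j}=t}=(1-q^{\nu_m+i+j-2})\,q^{\,t(\nu_m+i+j-2)}$ for $t\in\{0,1,2,\ldots\}$. With $q=e^{-\epsilon}$ one has $\epsilon B^{(m)}_{i,j}\to\Exp(\nu_m+i+j-2)$ in distribution as $\epsilon\downarrow 0$, hence $\epsilon\,\Time_{\mathrm{geom}}(k)\to\Time(k)$ and, more generally, the rescaled finite-dimensional laws of the geometric last-passage time process converge to those of $(\Time(k),\ k\in\N)$. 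The factorization $q^{\nu_m+i+j-2}=q^{\,i-1}\cdot q^{\,\nu_m+j-1}$ exhibits the clock in block $m$ as carrying row weight $a_i=q^{\,i-1}$ (the same in every block) and column weight $b^{(m)}_j=q^{\,\nu_m+j-1}$, which is exactly the data consumed by the Schur-process machinery.

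Applying column-by-column RSK to $A^{(k)}$ along the lines of Subsection~\ref{subsect3.1}, the shape $\lambda^{(k)}$ of the insertion tableau obtained after exhausting the columns of blocks $1,\ldots,k$ satisfies $\Time_{\mathrm{geom}}(k)=\lambda^{(k)}_1$ (Greene-type identity), and the chain of shapes $\lambda^{(1)}\subseteq\lambda^{(2)}\subseteq\cdots$, together with the intermediate shapes recorded during the insertions, is distributed as a Schur process with the single-variable specializations above, the block decomposition corresponding to a skew plane-partition geometry whose interlacing profile is fixed by $n$ and $\vec{\ell}$; the conventions $\nu_0=1$, $\ell_0=-n$ in \eqref{ker_trunc_prod_log} encode the $n$-variable row specialization as a ``zeroth block''. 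Such a Schur process is a determinantal point process on $\N\times\mathbb{Z}$ with an explicit double-contour-integral correlation kernel $K^{(q)}$, and the event $\{\Time_{\mathrm{geom}}(r_k)\le s_k,\ k=1,\ldots,N\}$ is exactly the event that the associated configuration has no point to the right of $s_k$ in each time slice $r_k$; therefore its probability equals the Fredholm determinant $\fdet{I-\chi K^{(q)}\chi}$ of $K^{(q)}$ restricted to these slices and gap regions.

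It remains to identify $K^{(q)}$ with a $q$-analogue of \eqref{ker_trunc_prod_log} and to let $q\uparrow 1$. This is precisely the regime isolated by Borodin, Gorin, and Strahov~\cite{BorodinGorinStrahov}: rewriting $K^{(q)}$ as a ratio of products of $q$-Gamma functions and sending $q=e^{-\epsilon}\uparrow 1$ with positions rescaled by $\epsilon$, the discrete configuration on $\epsilon\mathbb{Z}$ converges to a determinantal process on $\Rpl$ whose kernel is that of the truncated-unitary product-matrix point process. Matching the parameters $(n,\vec{\nu},\vec{\ell})$ and carrying out the change of variables that converts the multiplicative lattice variables into the additive ones---this produces the factors $e^{x-y}$ and $e^{-x\zeta+y\sigma}$, turns the degenerating $q$-Gamma ratios into the Meijer $G$-function, and deforms the contours into the $S_{\sigma}$, $S_{\zeta}^{(n)}$ of Fig.~\ref{contour2}---one arrives at $K_{n,\vec{\nu},\vec{\ell}}$ with $\chi$ becoming $\chi_f$. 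Combined with $\epsilon\,\Time_{\mathrm{geom}}(r_k)\to\Time(r_k)$, this yields \eqref{lpp_findimkern_fredh}.

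The main obstacle is this final passage to the limit: one must promote the pointwise convergence of the rescaled kernels $K^{(q)}$ to $K_{n,\vec{\nu},\vec{\ell}}$ to a convergence strong enough for the Fredholm determinants to converge---in practice, convergence in trace norm on $L_2(\{r_1,\ldots,r_N\}\times\Rpl)$ after a gauge (conjugation) transformation that leaves the determinant unchanged---which in turn requires uniform exponential tail estimates on $K^{(q)}$ as $x,y\to\infty$, obtained by a steepest-descent analysis on the contours $S_{\sigma}$ and $S_{\zeta}^{(n)}$. By comparison the combinatorial input is routine, though it must be set up with care in the multi-block, multi-time setting (verifying that column-wise RSK on the layered array genuinely produces the stated Schur process and that the block-boundary shapes carry the extended kernel above); once the kernel is in hand in the form \eqref{ker_trunc_prod_log}, the remainder is bookkeeping.
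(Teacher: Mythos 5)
Your combinatorial skeleton coincides with the paper's: approximate the exponential clocks by geometric ones, run RSK so that the last-passage time becomes $\lambda^{(k)}_1$ of a Schur process (Theorem~\ref{thm:percvialambda1}), and ultimately identify the answer with the truncated-unitary product-matrix kernel of Borodin--Gorin--Strahov. The place where you genuinely diverge is where the $q\to1$ (in the paper, $N\to\infty$) limit is taken, and this is also where your argument has a real gap. You first write the gap probability of the \emph{discrete} Schur process as a Fredholm determinant of a kernel $K^{(q)}$ and then propose to pass to the limit \emph{inside} the determinant; you yourself flag that this requires trace-norm convergence after a gauge conjugation, together with uniform exponential tail estimates on $K^{(q)}$ obtained by steepest descent on $S_\sigma$, $S_\zeta^{(n)}$ --- but none of this is carried out, and it is not supplied by the reference either: \cite{BorodinGorinStrahov} gives convergence of the Schur-process correlation kernels to the product-matrix kernel, which by itself does not yield convergence of gap probabilities over the unbounded sets $(s_j,+\infty)$. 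As written, the central analytic step of your proof is a promissory note.

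The paper avoids this issue entirely by taking the limit one level earlier. It computes the $N\to\infty$ limit of the \emph{joint densities} of the whole shape vector $(\Lambda^{(1)},\ldots,\Lambda^{(q)})$ (Proposition~\ref{prop:fidi_lambda_conv}), using uniform-on-compacts asymptotics of Schur polynomials and a Riemann-sum argument, and recognizes the limit measure \eqref{eq:prop_Mq} as exactly the one shown in \cite{BorodinGorinStrahov} to induce the determinantal process $\TruncLog_{n,\vec{\nu},\vec{\ell}}$ with kernel \eqref{ker_trunc_prod_log}. Since $\prob{\Time(r_k)\le s_k,\ k=1,\ldots,N}$ is then an exact gap probability of a determinantal process with almost surely finitely many particles (a finite-rank kernel), the identity \eqref{lpp_findimkern_fredh} is the standard gap-probability formula applied to the limiting process itself; no convergence of Fredholm determinants, trace-norm estimates, or steepest-descent tail bounds are needed anywhere. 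If you want to keep your route, you must actually prove the kernel convergence in a form strong enough for determinants (e.g.\ Hilbert--Schmidt factorization plus dominated convergence on the gap sets), which is substantially more work than the density argument; alternatively, adopt the paper's order of operations. One smaller remark: the multi-time coherence you call routine (that the shapes at different block boundaries form a single Schur process) is exactly the content of Lemma~\ref{lemma:commutativity} and Proposition~\ref{prop:prop3.3} --- it is short but it does need to be stated and proved, since RSK insertions do not commute naively with truncation.
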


\begin{remark}
  \label{rem:th1rem1}
  More explicitly,  the Fredholm determinant in~\eqref{lpp_findimkern_fredh} can be written as
  \begin{equation}
    \label{eq:th1_fdredh}
    \begin{aligned}
      &\prob{\Time(k)\leq s_{k},\ k=1,\ldots,N} \\
      &=1+\sum \limits_{m=1}^\infty \frac{(-1)^m}{m!}\sum\limits_{\substack{\theta_j \in \{r_1,\ldots,r_N\}\\j=1, \ldots, m}} \int \limits_{s_{r_1}}^{+\infty} dx_1\cdots \int \limits_{s_{r_m}}^{+\infty} dx_m\, \det{\Big(K_{n,\vec{\nu},\vec{\ell}}(\theta_u, x_u;\theta_v,x_v)\Big)}_{u,v=1}^m,
    \end{aligned}
  \end{equation}
  which reduces to a finite sum because the corresponding determinantal point process almost surely has a finite number of particles, that is to say, the kernel is that of a finite-rank operator. 
\end{remark}
\begin{remark}
	It is a remarkable fact that since the left-hand side vanishes if at least one of the~$s_k$ is zero, so does the expression on the right-hand side, which is not easy to see directly.
\end{remark}

The kernel~\eqref{ker_trunc_prod_log} is a transformed version of the kernel that describes squared singular values of a product of truncated unitary matrices, as it will be explained at the end of Section~\ref{subsect3.3}. Therefore, Theorem~\ref{thm21} links the last-passage time process~$(\mathfrak{T}(k), k\in\mathbb{N})$ with truncated-unitary product-matrix point processes studied in Borodin, Gorin, and Strahov~\cite{BorodinGorinStrahov}.

\subsection{Convergence to critical stochastic process}
\label{Section2.2}
The goal of this section is to define the critical stochastic process and to establish finite-dimensional convergence of the scaled~$\left(\Time(k), k\in\N\right)$ to this critical stochastic process.

By definition, the \textit{critical stochastic process}~$\left(\Crit(t), t>0\right)$ is the unique a.s. continuous stochastic process whose finite-dimensional distributions are given by 
\begin{equation}
	\prob{\Crit(t_k)\leq s_k,\; k=1,\ldots,N} =\fdet{I-\chi_f K_\Crit \chi_f}_{L_2\left(\left\{t_1,\ldots,t_N\right\}\times\R_{+}\right)},
\end{equation}
where~$s_k \in \R$, the operator~$\chi_{f}$ is a multiplication by $f(q,x)$ defined in~\eqref{eq:thm1_gfunc}, and~$K_\Crit$ is the \textit{extended critical kernel}
	\begin{equation}
		\label{eq:lim_kern}
		\begin{aligned}
			K_\Crit(\tau,x; t, y) = &-\frac{1_{t>\tau}}{\sqrt{2 \pi (t-\tau)}} e^{-\frac{1}{2}\frac{(x-y)^2}{t-\tau}}\\
			&+ \int \limits_{{S}_{\sigma}}\frac{d\sigma}{2\pi i} \int \limits_{{S}_{\zeta}} \frac{d\zeta}{2 \pi i} \  \frac{e^{\frac{t\sigma^2}{2}}\GammaF{-\zeta}}{e^{\frac{\tau \zeta^2}{2}}\GammaF{-\sigma}} \frac{e^{-x\zeta+y\sigma}}{\sigma-\zeta}, \quad x,y \in \R,
		\end{aligned}
	\end{equation}
	with the contours of integration shown in Fig.~\ref{contour3}. Note that unlike in the previous section, the~$r_j$ are positive real numbers and not necessarily integers, and~$x$ and~$y$ can be both positive or negative.

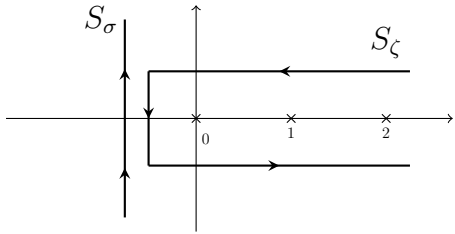
\begin{figure}[ht!]
	\centering
	\begin{tikzpicture}[scale=2.5]
		\begin{scope}[compass style/.style={color=black}, color=black, decoration={markings,mark= at position 0.5 with {\arrow{stealth}}}]
			\draw[->] (-1.,0) -- (1.35,0);
			\draw[->] (0,-0.6) -- (0,0.6);

			\draw[thick, postaction=decorate] (-0.25,-0.25) -- (1.125,-0.25);
			\draw[thick, postaction=decorate] (1.125,0.25) -- (-0.25,0.25);
			\draw[thick, postaction=decorate] (-0.25,0.25) -- (-0.25,-0.25);

			\node at (1,+0.4) {${S}_{\zeta}$};

			\draw (0,0) node [cross] {};
			\draw (0.5,0) node [cross] {};
			\draw (1,0) node [cross] {};
			
			\node at (1.0,-0.075) {\scalebox{0.5}{$2$}};			
			\node at (0.5,-0.075) {\scalebox{0.5}{$1$}};
			\node at (0.05,-0.1075) {\scalebox{0.5}{$0$}};

			\draw[thick, postaction=decorate] (-0.375,-0.525) -- (-0.375,0);
			\draw[thick, postaction=decorate] (-0.375,0) -- (-0.375,0.525);

			\node at (-0.5,0.525) {$S_{\sigma}$};
		\end{scope}
	\end{tikzpicture}
	\caption{The integration contours~${S}_\sigma$ and~${S}_{\zeta}$. The contour~$S_\sigma$ crosses the real axis at~$-c$, where~$c>0$, and does not intersect~$S_\zeta$.}
	\label{contour3}
\end{figure}
The existence and uniqueness of this process follows from the fact that it can be interpreted as the top curve of a scaling limit of the Dyson Brownian motion with drift. For details, see~\cite[Theorem~1.3 and Theorem~3.2]{Ahn1}. Also, note that up to the sign, the first term in~\eqref{eq:lim_kern} is the transition density of the standard Wiener process. This is a manifestation of the fact that the process~$\Crit(t)$ is related to the Brownian motion on~$\mathrm{GL}(N,\mathbb{C})$ (see Ahn~\cite{Ahn1}). While our kernel looks different from that of~\cite[Formula (3.2)]{Ahn1}, they are gauge equivalent after a simple change of variables, as it is not difficult to check.

Suppose~$n \df n(\alpha)$, $\vec{\nu} \df (\nu(\alpha), \nu(\alpha), \ldots)$, and $\vec{\ell} \df (\ell(\alpha), \ell(\alpha), \ldots)$, where~$n(\alpha)$, $\nu(\alpha)$, and~$\ell(\alpha)$ are sequences of positive integers indexed by~$\alpha \in \N$. To signify the dependence of the last-passage percolation time in~\eqref{lpp_def} on~$n, \vec{\nu}$, and~$\vec{\ell}$, which now depend on~$\alpha$, we will write~$\Time_\alpha(k)$ instead of $\Time(k)$.

Below we  need the following notion. Given two sequences~$\{u(\alpha)\}_{\alpha \in \N}$ and~$\{v(\alpha)\}_{\alpha \in \N}$ of positive numbers, we will write $u(\alpha) \ll v(\alpha)$ if for sufficiently large~$\alpha$ there exists~$\delta \in (0,1)$ such that~$(u(\alpha))^{1 + \delta} \le v(\alpha)$ (and thus for all smaller~$\delta$ the inequality holds as well). Note that if~$u(\alpha) \to \infty$ as~$\alpha \to \infty$ and~$u(\alpha) \ll v(\alpha)$, then~$u(\alpha) = o(v(\alpha))$.

\begin{thm}
	\label{thm22}
	Let~$\alpha \to \infty$ in such a way that~$n(\alpha), \nu(\alpha), \ell(\alpha) \to \infty$ and~$n(\alpha) \ll \nu(\alpha) \ll \ell(\alpha)$. Introduce a scaled version of the last-passage time process,

\begin{equation}
	\label{eq:thm1_transf}
	\begin{aligned}
	\Crit_\alpha(t) =&\Time_\alpha([t \nu(\alpha)]) - \log{n(\alpha)}\\
	&- [t \nu(\alpha)] \left(\log{\frac{\nu(\alpha) + \ell(\alpha)}{\nu(\alpha)}} + \frac{\ell(\alpha)}{2\nu(\alpha) (\nu(\alpha) + \ell(\alpha))} \right)+\frac{1}{2n(\alpha)},\quad t>0. 
	\end{aligned}
\end{equation}
Then, the convergence of stochastic processes in finite-dimensional distributions takes place,
	\begin{equation}
		(\Crit_\alpha(t), t>0) \limalptoinffd (\Crit(t), t>0), 
	\end{equation}
	that is,
	\begin{equation}
		\label{eq:thm1_fidi}
		\prob{\Crit_\alpha(t_k) \leq s_{k},\ k=1,\ldots,N}  \underset{\alpha \to \infty}{\longrightarrow}\fdet{I-\chi_f K_\Crit \chi_f}_{L_2(\{t_1,\ldots,t_N\}\times\Rpl)},
	\end{equation}
	where~$t_k \in \Rpl$ are pairwise distinct, $s_k \in \R$, and~$f$ is given by
	\begin{equation}
		f(q,x)= \sum \limits_{j=1}^N \delta_{q,t_j} 1_{(s_j,+\infty)}(x) 
	\end{equation}
	with~$\delta_{q,t_j}$ being the Kronecker delta.
\end{thm}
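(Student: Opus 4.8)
The plan is to take the exact Fredholm-determinant formula of Theorem~\ref{thm21} as the starting point and transport it through the affine rescaling~\eqref{eq:thm1_transf}, thereby reducing the statement to a kernel-convergence problem together with a uniform domination estimate. First, fix pairwise distinct $t_1,\dots,t_N>0$ and set $r_k \df [t_k\nu(\alpha)]$; for $\alpha$ large these are pairwise distinct positive integers. The map in~\eqref{eq:thm1_transf} is, for each fixed $\alpha$ and each fixed $t$, an affine shift $x\mapsto x - c_\alpha(k)$ of the real line with $k=[t\nu(\alpha)]$ and $c_\alpha(k)=\log n(\alpha)+k\big(\log\frac{\nu(\alpha)+\ell(\alpha)}{\nu(\alpha)}+\frac{\ell(\alpha)}{2\nu(\alpha)(\nu(\alpha)+\ell(\alpha))}\big)-\frac{1}{2n(\alpha)}$. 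Hence $\{\Crit_\alpha(t_k)\le s_k\}=\{\Time_\alpha(r_k)\le s_k+c_\alpha(r_k)\}$, and Theorem~\ref{thm21} applied with these arguments rewrites the left-hand side of~\eqref{eq:thm1_fidi} as $\fdet{I-\chi_f \widetilde K_\alpha \chi_f}_{L_2(\{t_1,\dots,t_N\}\times\R_+)}$, where $\widetilde K_\alpha(q,x;r,y)= g_\alpha(q,x)\,K_{n,\vec\nu,\vec\ell}(q,x+c_\alpha(q);r,y+c_\alpha(r))\,g_\alpha(r,y)^{-1}$ and $g_\alpha$ is an auxiliary gauge factor that I choose to tidy up the integrand (Fredholm determinants being invariant under such conjugation).

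Second, and this is the analytic heart, I will show $\widetilde K_\alpha(\tau\nu,x;t\nu,y)\to K_\Crit(\tau,x;t,y)$ pointwise, uniformly on compacts in $(x,y)$. The kernel~\eqref{ker_trunc_prod_log} has two pieces. For the double-contour-integral piece I substitute rescaled spectral variables near $\zeta,\sigma\approx\nu(\alpha)$ (the exact normalization being dictated by~\eqref{eq:thm1_transf}) and run a steepest-descent/Laplace analysis of the ratio of Gamma products $\prod_{j}\GammaF{\nu_j+\sigma}/\prod_j\GammaF{\nu_j+\zeta}\cdot\prod_j\GammaF{\nu_j+\ell_j+\zeta}/\prod_j\GammaF{\nu_j+\ell_j+\sigma}$ via Stirling; the hierarchy $n(\alpha)\ll\nu(\alpha)\ll\ell(\alpha)$ is precisely what makes the subleading terms vanish and leaves the quadratic exponents $e^{t\sigma^2/2}/e^{\tau\zeta^2/2}$, while the anomalous $j=0$ term ($\nu_0=1$, $\ell_0=-n$) survives to produce the factor $\GammaF{-\zeta}/\GammaF{-\sigma}$ of~\eqref{eq:lim_kern}; the shift constants $c_\alpha(k)$ are absorbed exactly at the saddle, accounting for the $\log n(\alpha)$ and $\tfrac{1}{2n(\alpha)}$ corrections. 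This is where Lemmas~\ref{lemma44}--\ref{lemma49} and Proposition~\ref{PropositionMeijFirth} do the work. For the $\mathbf 1_{r>q}$ Meijer-$G$ piece note that $r-q=[t\nu(\alpha)]-[\tau\nu(\alpha)]\to\infty$, so $G^{r-q,0}_{r-q,r-q}$ is a Meijer $G$-function with a growing number of equally spaced parameters; a local-central-limit-type asymptotic (again Proposition~\ref{PropositionMeijFirth}), after the shifts $c_\alpha$, turns it into the Gaussian heat kernel $\tfrac{1}{\sqrt{2\pi(t-\tau)}}e^{-(x-y)^2/2(t-\tau)}$, matching the first term of~\eqref{eq:lim_kern}.

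Third, I upgrade pointwise kernel convergence to convergence of the Fredholm determinants. Because of the finite-rank/finite-particle structure recorded in Remark~\ref{rem:th1rem1}, rather than trace-norm continuity I use the series expansion~\eqref{eq:th1_fdredh} and dominated convergence: from deforming the $\zeta$- and $\sigma$-contours off the real axis I extract an $\alpha$-independent bound $|\widetilde K_\alpha(q,x;r,y)|\le\Phi(x)\Psi(y)$ with Gaussian/exponential decay on each $\{r_j\}\times(s_j,\infty)$, apply Hadamard's inequality to each $m\times m$ determinant in~\eqref{eq:th1_fdredh}, and pass to the limit term by term. The limiting series is~\eqref{eq:th1_fdredh} for $K_\Crit$, i.e. the right-hand side of~\eqref{eq:thm1_fidi}. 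Existence and a.s.\ continuity of the limit process $\Crit$ need not be reproven here, being already available via~\cite{Ahn1}, so finite-dimensional convergence is all that is required.

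I expect the main obstacle to be the steepest-descent analysis in the three-scale regime $n(\alpha)\ll\nu(\alpha)\ll\ell(\alpha)$: one must simultaneously keep the anomalous $j=0$ term alive (it produces $\GammaF{-\zeta}/\GammaF{-\sigma}$), balance the shift constants $c_\alpha(k)$ against the saddle so that every subleading correction in~\eqref{eq:thm1_transf} is absorbed exactly, and control the contour tails uniformly in $\alpha$ so as to obtain the domination needed in the third step. The many-parameter Meijer-$G$ local limit is delicate but essentially self-contained once Proposition~\ref{PropositionMeijFirth} is established.
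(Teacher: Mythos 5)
Your overall architecture matches the paper's proof: start from Theorem~\ref{thm21}, absorb the centering \eqref{eq:thm1_transf} into the kernel as a shift of the spatial arguments plus a gauge conjugation, prove convergence of the resulting kernel to (a gauge-equivalent version of) $K_\Crit$ using the Gamma-function estimates of Lemmas~\ref{lemma44}--\ref{lemma49}, and treat the $\mathbf{1}_{r>q}$ Meijer-$G$ term through its Mellin--Barnes representation, whose integrand $h_\alpha(\eta;[t\nu]-[\tau\nu])$ tends to $e^{(t-\tau)\eta^2/2}$ --- your ``local CLT'' picture of the heat-kernel limit is exactly this. The structural step you only implicitly assume is the factorization $\frac{1}{\sigma-\zeta}=-\int_0^{+\infty}e^{u(\sigma-\zeta)}\,du$, which splits the double integral into $\int_0^\infty\phi_\alpha(x+u)\psi_\alpha(y+u)\,du$ and is what makes Proposition~\ref{PropositionMeijFirth} applicable. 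Where you genuinely differ is the last step: you pass to the limit in the Fredholm series \eqref{eq:th1_fdredh} via Hadamard plus dominated convergence, whereas the paper conjugates by $e^{-x(T-\kappa-\tau)+y(T+\kappa-t)}$, moves to the weighted measure $e^{-\kappa x}\,dx$ and invokes Lemma~3.4.5 of \cite{AndersonGuionnetZeitouni}, reducing everything to uniform convergence of the kernel on products of half-lines. Your route is standard and workable, but note that you will need essentially the same conjugation/weighting anyway: the unconjugated kernel admits no bound of the form $\Phi(x)\Psi(y)$ decaying in both variables (the factor $e^{-x\zeta}$ grows in $x$ on the portion of $S_\zeta^{(n)}$ with $\re{\zeta}<0$, and the heat-kernel term does not factor at all), so the ``auxiliary gauge factor'' you leave unspecified is doing real work.

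One point in your sketch would go astray if taken literally: there is no steepest-descent analysis with ``rescaled spectral variables near $\zeta,\sigma\approx\nu(\alpha)$''. In the paper the contours stay fixed ($S_\sigma$ is merely straightened into a vertical line), the spectral variables remain of order one --- which is precisely why the $j=0$ factors can converge to $\GammaF{-\zeta}/\GammaF{-\sigma}$, as you yourself state a sentence later --- and the Gaussian exponents arise from the pointwise limit $[t\nu]\left(F_\nu(\sigma)-F_{\nu+\ell}(\sigma)\right)\to t\sigma^2/2$ of Lemma~\ref{lemma44}(a). The genuinely hard part, which your plan defers to ``Lemmas~\ref{lemma44}--\ref{lemma49} do the work'', is the uniform-in-$\alpha$ domination along these fixed contours: the paper splits $S_\sigma$ into five regimes according to the size of $|\im{\sigma}|$ relative to $\nu^{1\pm\delta}$ and $(\nu+\ell)^{1\pm\delta}$, and it is exactly there (not at a saddle point) that the hypothesis $n\ll\nu\ll\ell$ is consumed. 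With that correction your plan coincides in substance with the paper's argument, the only real variation being the series-expansion route to the determinant limit.
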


\begin{remark}
	The condition~$n(\alpha) \ll \nu(\alpha) \ll \ell(\alpha)$ can be relaxed, this however will complicate the proof considerably.
\end{remark}

\subsection{Critical stochastic process and hard-edge limits of product-matrix ensembles}
\label{sect_23}

The main purpose of this section is to relate the critical stochastic process~$(\mathfrak{C}(t), t>0)$ with product-matrix ensembles. It turns out that the critical kernel~\eqref{eq:lim_kern} arises in a particular scaling limit of the singular spectrum of a product of i.i.d Ginibre and of truncated unitary matrices. We will present two theorems describing the hard-edge scaling limit in the Ginibre and truncated-unitary case, together with a theorem that shows a hard-to-soft edge transition. Theorem~\ref{thm23} establishes converges of the extended Ginibre product-matrix kernel to the extended hard-edge kernel, a multi-time generalization of the kernel from Theorem~5.3 in Kuijlaars and Zhang~\cite{KuijlaarsZhang}.
Theorem~\ref{thm24} shows that the very same extended hard-edge kernel emerges in the hard-edge scaling limit for the truncated-unitary product-matrix process. Finally, Theorem~\ref{thm25} states that the extended critical kernel~\eqref{eq:lim_kern} is a scaling limit of the extended hard-edge kernel of Theorem~\ref{thm23} and Theorem~\ref{thm24}.

\subsubsection{Product of Ginibre matrices and its hard-edge scaling limit}
\label{sect_231}
We start by recalling the definition of the \textit{Ginibre product-matrix point process} and its kernel. Let $\{G_k\}_{k \in \N}$ be a sequence of independent Ginibre random matrices of size~$(n+\nu_k-1)\times (n+\nu_{k-1}-1)$, where~$\nu_k \in \N$ and~$\nu_0=1$. Once again, the entries of a Ginibre matrix are i.i.d standard complex Gaussian random variables. Consider matrices~$Y_k^*Y_k$ of size~$n \times n$, where~$Y_k = G_k\times \cdots \times G_1$. Their eigenvalues~$\lambda^{(k)}_j$, the squared \textit{singular values} of~$Y_k$, are almost surely distinct and form a set
\begin{equation}
	\{(k,\lambda^{(k)}_j)|\, {j=1, \ldots,n,\, k \in \N}\}.
\end{equation}
This random set induces a determinantal probability measure on configurations~$\Conf{\N \times \Rpl}$, called the \textit{Ginibre product-matrix point process}~$\mathscr{G}_{n, \vec{\nu}}$. It is proven in Strahov~\cite{StrahovD} that the corresponding extended kernel reads
\begin{equation}
  \label{TimeDependentGinibreKernel}
  \begin{aligned}
    &K_{n,\vec{\nu}}^{\mathscr{G}}(q,x;r,y)=-\frac{1_{r>q}}{x}\Gf{r-q,0}{0,r-q}{-}{\nu_{q+1}-1, \ldots, \nu_{r}-1}{\frac{y}{x}}\\
    &+\int\limits_{S_{\sigma}} \frac{d\sigma}{2\pi i}
    \int\limits_{S_{\zeta}^{(n)}} \frac{d\zeta}{2 \pi i}\ \frac{\prod \limits_{j=0}^{r} \GammaF{\sigma+\nu_j}}{\prod\limits_{j=0}^{q} \GammaF{\zeta+\nu_j}}
    \frac{\GammaF{\zeta-n+1}}{\GammaF{\sigma-n+1}}\frac{x^\zeta y^{-\sigma-1}}{\sigma-\zeta},\quad x,y>0,\ q, r \in \N,
  \end{aligned}
\end{equation}
where the integration contours are shown in Fig.~\ref{contour2}. The double integral in~\eqref{TimeDependentGinibreKernel} is well defined due to the asymptotic behavior of the gamma function that ensures fast convergence. The latter also allows for deforming~$S_\sigma$ into a vertical straight line, if necessary.

\begin{thm}
  \label{thm23}
  For every~$q,r \in \N$,
  \begin{equation}
	  \label{eq:thm1_1}
	  \frac{1}{n}  K_{n,\vec{\nu}}^{\mathscr{G}}\left(q,\frac{x}{n};r,\frac{y}{n}\right)\limntoinf  K^{\Hardedge}_{\vec{\nu}}(q,x;r,y)
  \end{equation}
  uniformly for~$x, y$ in compact subsets of~$\Rpl$, where
  \begin{equation}
    \label{eq:Meij_hardedge}
    \begin{aligned}
	    &K^{\Hardedge}_{\vec{\nu}}(q,x;r,y)=-\frac{1_{r>q}}{x}\Gf{r-q,0}{0,r-q}{-}{\nu_{q+1}-1, \ldots, \nu_{r}-1}{\frac{y}{x}}\\
	    &+\int\limits_{S_{\sigma}} \frac{d\sigma}{2 \pi i}
	    \int\limits_{S_{\zeta}} \frac{d\zeta}{2 \pi i}\ \frac{\prod \limits_{j=1}^{r} \GammaF{\sigma+\nu_j}}{\prod\limits_{j=1}^{q} \GammaF{\zeta+\nu_j}} \frac{\GammaF{-\zeta}}{\GammaF{-\sigma}} \frac{x^\zeta y^{-\sigma-1}}{\sigma-\zeta},\quad x,y>0,\ q, r \in \N,
    \end{aligned}
  \end{equation}
  and the contours~$S_\sigma, S_\zeta$ are specified in Fig.~\ref{contour4}. 

\end{thm}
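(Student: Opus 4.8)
The plan is to treat the two terms of the kernel $K_{n,\vec\nu}^{\mathscr{G}}$ in \eqref{TimeDependentGinibreKernel} separately, and to reduce the convergence of its double-integral part to a pointwise limit of the integrand together with a dominated-convergence estimate, following and mildly generalizing the single-time argument of Kuijlaars and Zhang~\cite{KuijlaarsZhang}. First I would dispose of the Meijer-$G$ term: under $x\mapsto x/n$, $y\mapsto y/n$ together with the overall factor $1/n$, it is scale-invariant, because the factor $1/(x/n)$ produces an $n$ that cancels the prefactor while the argument $y/x$ of the $G$-function is unchanged; thus this term already equals the first term of $K^{\mathscr{H}}_{\vec\nu}$ in \eqref{eq:Meij_hardedge}, with nothing to prove.

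For the double integral, after the substitution the three powers of $n$ collapse into a single factor $n^{\sigma-\zeta}$; separating the $j=0$ Gamma factors (recall $\nu_0=1$) and applying the reflection formula $\GammaF{z}\GammaF{1-z}=\pi/\sin\pi z$ twice yields the exact identity
\[ \frac{\GammaF{\sigma+1}}{\GammaF{\zeta+1}}\cdot\frac{\GammaF{\zeta-n+1}}{\GammaF{\sigma-n+1}}=\frac{\GammaF{-\zeta}}{\GammaF{-\sigma}}\cdot\frac{\GammaF{n-\sigma}}{\GammaF{n-\zeta}}. \]
Hence the rescaled double-integral part of $K_{n,\vec\nu}^{\mathscr{G}}$ carries exactly the integrand of the double-integral part of $K^{\mathscr{H}}_{\vec\nu}$ multiplied by $R_n(\sigma,\zeta):=\GammaF{n-\sigma}\,\GammaF{n-\zeta}^{-1}\,n^{\sigma-\zeta}$. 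Two reductions put $R_n$ under control. First, $R_n$ adds no poles in $\sigma$ --- indeed $\GammaF{n-\sigma}/\GammaF{-\sigma}=\prod_{k=0}^{n-1}(k-\sigma)$ is a polynomial --- so the contour $S_\sigma$ may be taken $n$-independent, namely the one in Fig.~\ref{contour4}. Second, in the variable $\zeta$ the only poles of the rewritten integrand lie at $\zeta=0,1,2,\dots$ (the pole at $\zeta=\sigma$ staying off the relevant region), and the ones at $\zeta=n,n+1,\dots$ are killed by the zeros of $1/\GammaF{n-\zeta}$; therefore, using the rapid decay of the integrand, the contour $S_\zeta^{(n)}$ encircling $0,\dots,n-1$ can be opened up to the $n$-independent contour $S_\zeta$ of Fig.~\ref{contour4} encircling all of $\Nzero$. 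On these fixed contours one then lets $n\to\infty$: the elementary ratio asymptotics $\GammaF{n+a}/\GammaF{n+b}=n^{a-b}(1+O(1/n))$ give $R_n(\sigma,\zeta)\to1$ uniformly on compact subsets of $S_\sigma\times S_\zeta$, so the integrand converges pointwise to that of $K^{\mathscr{H}}_{\vec\nu}$, and dominated convergence delivers \eqref{eq:thm1_1}, locally uniformly in $x,y$.

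The main obstacle I anticipate is the uniform-in-$n$ integrable majorant required on the unbounded tails of $S_\sigma$ and $S_\zeta$, where $|R_n|$ itself is not bounded. The mechanism is that, along contours kept a fixed positive distance from $\mathbb{Z}$, the Gamma factors of the limiting integrand decay super-exponentially --- $\prod_{j=1}^r\GammaF{\sigma+\nu_j}/\GammaF{-\sigma}$ as $\re\sigma\to-\infty$ and $\GammaF{-\zeta}/\prod_{j=1}^q\GammaF{\zeta+\nu_j}$ as $\re\zeta\to+\infty$ --- while the extra factor contributed by $R_n$ grows no faster than $\exp\!\big(O((\re\sigma)^2/n)\big)$ (crudely: at most polynomially, via $\GammaF{n-\sigma}/\GammaF{-\sigma}=\prod_{k=0}^{n-1}(k-\sigma)$ and $\GammaF{-\zeta}/\GammaF{n-\zeta}=1/\prod_{k=0}^{n-1}(k-\zeta)$); so for all $n$ beyond some fixed $n_0$ the product is integrable and the tail contributions are uniformly small, and this is exactly the place where the restriction of $x,y$ to compact subsets of $\Rpl$ enters. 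For a single time and $q=r$ this majorant bound is \cite[Theorem~5.3]{KuijlaarsZhang}; the passage to the extended kernel and to an arbitrary index pair $(q,r)\in\N^2$ brings nothing new, since the argument above is carried out for each fixed $(q,r)$.
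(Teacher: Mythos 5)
Your proposal is correct and follows essentially the same route as the paper: the Meijer-$G$ term is scale-invariant, your reflection-formula identity is precisely~\eqref{eq:gamma_complement}, and the limit is obtained from the pointwise convergence $\GammaF{n-\sigma}\,n^{\sigma-\zeta}/\GammaF{n-\zeta}\to 1$ together with dominated convergence and gamma-function tail bounds, exactly as in the argument borrowed from Kuijlaars--Zhang. The only deviations are technical: you open up $S_\zeta^{(n)}$ to the fixed contour $S_\zeta$ before taking $n\to\infty$ (legitimate, since the poles at $\zeta=n,n+1,\ldots$ are cancelled by $1/\GammaF{n-\zeta}$ and the integrand decays), whereas the paper estimates directly on $S_\zeta^{(n)}$ via Lemma~\ref{lemma47}, and for the uniform-in-$n$ majorant the bounds should be stated for the balanced factors $\GammaF{n-\sigma}n^{\sigma}/\GammaF{n}$ and $\GammaF{n}/(\GammaF{n-\zeta}n^{\zeta})$ (the paper's Lemmas~\ref{lemma46}--\ref{lemma48}), since your per-variable groupings $\prod_{k=0}^{n-1}(k-\sigma)\,n^{\sigma}$ and $n^{-\zeta}/\prod_{k=0}^{n-1}(k-\zeta)$ individually carry compensating $\GammaF{n}$-sized factors and are not uniformly bounded in $n$ on their own.
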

The \textit{extended hard-edge kernel}~\eqref{eq:Meij_hardedge} defines a determinantal point process~$\Hardedge_{\vec{\nu}}$ on~$\N \times \Rpl$ that we call the \textit{hard-edge point process}.
The theorem generalizes~\cite[Theorem~5.3]{KuijlaarsZhang} to the multi-time kernels and can be interpreted as the convergence of the corresponding determinantal point processes. It is worth pointing out that in~\cite{KuijlaarsZhang} the contour~$S_\sigma$ is chosen differently. However, we need the contour as in Fig.~\ref{contour4} to ensure convergence for~$r=1$.

\begin{figure}[ht!]
	\centering
	\begin{tikzpicture}[scale=2.5]
		\begin{scope}[compass style/.style={color=black}, color=black, decoration={markings,mark= at position 0.5 with {\arrow{stealth}}}]
			\draw[->] (-1.9,0) -- (1.35,0);
			\draw[->] (0,-0.6) -- (0,0.6);
			\draw[thick, postaction=decorate] (-0.25,-0.25) -- (1.125,-0.25);
			\draw[thick, postaction=decorate] (1.125,0.25) -- (-0.25,0.25);
			\draw[thick, postaction=decorate] (-0.25,0.25) -- (-0.25,-0.25);

			\node at (1,+0.4) {${S}_{\zeta}$};

			\draw (-1.5,0) node [cross] {};
			\draw (-1,0) node [cross] {};
			\draw (-0.5,0) node [cross] {};
			\draw (0,0) node [cross] {};
			\draw (0.5,0) node [cross] {};
			\draw (1,0) node [cross] {};

			\node at (1.0,-0.075) {\scalebox{0.5}{$2$}};
			\node at (0.5,-0.075) {\scalebox{0.5}{$1$}};
			\node at (0.05,-0.1075) {\scalebox{0.5}{$0$}};
			\node at (-0.575,-0.075) {\scalebox{0.5}{$-1$}};
			\node at (-1.075,-0.075) {\scalebox{0.5}{$-2$}};
			\node at (-1.575,-0.075) {\scalebox{0.5}{$-3$}};

			\draw[thick, postaction=decorate] (-0.375,-0.25) -- (-0.375,0.25);
			\draw[thick, postaction=decorate] (-0.375,0.25) -- (-1.75,0.25);
			\draw[thick, postaction=decorate] (-1.75,-0.25) -- (-0.375,-0.25);

			\node at (-1,+0.4) {$S_{\sigma}$};

		\end{scope}
	\end{tikzpicture}  
	\caption{The integration contours~${S}_\sigma$ and~${S}_{\zeta}$ in~\eqref{eq:Meij_hardedge}.The contour~$S_\sigma$ crosses the real axis at~$-c$, where~$c>0$, and does not intersect~$S_\zeta$. All the poles~$\{ -\nu_0, -\nu_0-1, \ldots; -\nu_1, -\nu_1-1, \ldots\}$  in the~$\sigma$-plane lie inside the domain bounded by~$S_\sigma$.}
	\label{contour4}
\end{figure}
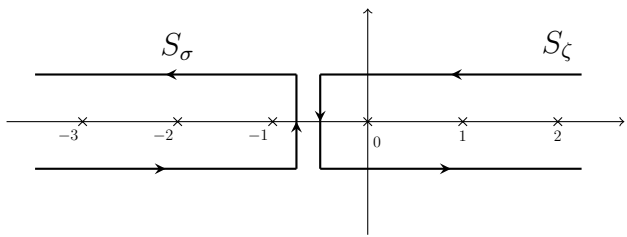

\subsubsection{Product of truncated unitary matrices and its hard-edge scaling limit}
\label{subsect:trunc}
Let~$\{U_k\}_{k \in \mathbb{N}}$ be random independent matrices, each~$U_k$ is of size~$m_k \times m_k$, uniformly distributed over the unitary group~$\mathbf{U}(m_k)$. Denote by~$T_k$ the~$(n+\nu_k-1) \times (n+\nu_k-1)$-truncation of~$U_k$, 
\begin{equation}
	T_k=
	\begin{pmatrix}
		(U_k)_{1,1} & \cdots & (U_k)_{1,n+\nu_k-1}\\ 
		\vdots  &  & \vdots \\
		(U_k)_{n+\nu_k-1,1} & \cdots & (U_k)_{n+\nu_k-1,n+\nu_k-1}
	\end{pmatrix},
\end{equation}
where~$n, \nu_k \in \N$ and we set~$\nu_0=1$. To stay in the realm of absolutely continuous distributions, further we assume that
\begin{equation}
	m_0 \df 0, \quad \ell_0 \df -n,
\end{equation}
\begin{equation}
	\ell_1 \df m_1 -(n+ \nu_1-1) \ge n,
\end{equation}
and
\begin{equation}
	\label{Condition1}
	\ell_k \df m_k-(n+\nu_k-1) \ge 1, \quad k=2,3,\ldots.
\end{equation}
see~\cite[Theorem~4.2.1]{Collins_phd} for more details.

The product~$Y_k = T_k\times \cdots \times T_1$ is a random matrix of size~$(n+\nu_k-1) \times n$. The random set~$\{(k,\widetilde{\lambda}^{(k)}_j)$, $j=1, \ldots,n$, $k \in \mathbb{N}\}$, where the~$\widetilde{\lambda}^{(k)}_j$ are  squared singular values of~$Y_k$ (eigenvalues of~$Y_k^*Y_k$), posses a determinantal structure and gives rise to a determinantal point process~$\Trunc_{n,\vec{\nu}, \vec{\ell}}$ on~$\mathbb{N} \times (0,1)$. This fact was established by Borodin, Gorin, and Strahov in~\cite{BorodinGorinStrahov}, and explicit formulas were given for the corresponding multi-time kernel~$K^{\Trunc}_{n,\vec{\nu},\vec{\ell}}\,(q,x;r,y)$,
  \begin{equation}
    \label{ker_trunc_prod}
    \begin{aligned}
	    &K_{n,\vec{\nu},\vec{\ell}}^{\Trunc}\,(q,x;r,y)=-\frac{1_{r>q}}{x}\Gf{r-q,0}{r-q, r-q}{\nu_{q+1}+\ell_{q+1}-1, & \ldots, & \nu_{r}+\ell_{r}-1}{\nu_{q+1}-1\hfill, & \ldots, & \nu_{r}-1\hfill}{\frac{y}{x}} \\
	    &+\int\limits_{S_{\sigma}} \frac{d\sigma}{2\pi i}\int\limits_{S_{\zeta}^{(n)}} \frac{d\zeta}{2 \pi i}\, \frac{\prod\limits_{j=0}^{r}\GammaF{\nu_j+\sigma}}{\prod\limits_{j=0}^{q}\GammaF{\nu_j+\zeta}} \frac{\prod\limits_{j=0}^{q}\GammaF{\nu_j+\ell_j+\zeta}}{\prod\limits_{j=0}^{r}\GammaF{\nu_j+\ell_j+\sigma}} \frac{x^{\zeta} y^{-\sigma-1}}{\sigma-\zeta},\ \, x,y \in (0,1),
    \end{aligned}
  \end{equation}
  where the integration contours~$S_\sigma$ and~$S_\zeta^{(n)}$ are specified in Fig.~\ref{contour2}. Again, it is worth pointing out that this kernel is a transformed version of the kernel~\eqref{ker_trunc_prod_log}. For more details, see the end of Section~\ref{subsect3.3}.

  In comparison to~\eqref{TimeDependentGinibreKernel}, the kernel~\eqref{ker_trunc_prod} contains an extra factor,
  \begin{equation}
	  \frac{\prod\limits_{j=1}^{q}\GammaF{\nu_j+\ell_j+\zeta}}{\prod\limits_{j=1}^{r}\GammaF{\nu_j+\ell_j+\sigma}},
  \end{equation}
 which will affect our further analysis in a significant way.

\begin{thm}
  \label{thm24}
  Let~$n(\alpha)$ and~$\ell_{j}(\alpha)$, $j \in \mathbb{N}$, be sequences that go to infinity as~$\alpha \to \infty$, in such a way that for each~$j \in \N$ one has~$n(\alpha) \ll \ell_j(\alpha)$. Set
  \begin{equation}
	  f_\alpha(q, x) = \left(q,\frac{x}{n(\alpha) \prod\limits_{j=1}^q (\ell_j(\alpha) + \nu_j(\alpha))}\right).
  \end{equation}

  Then, for every~$q,r \in \N$,
 \begin{equation}
	 \label{eq:thm_hard_eq1}
	 \frac{(n(\alpha))^{-1} \prod\limits_{j=1}^{r} \GammaF{\nu_j(\alpha) + \ell_j(\alpha)}}{\prod\limits_{j=1}^r (\ell_j(\alpha) + \nu_j(\alpha)) \prod\limits_{j=1}^{q} \GammaF{\nu_j(\alpha) + \ell_j(\alpha)}}  {K}^\Trunc_{n(\alpha),\vec{\nu},\vec{\ell}(\alpha)}(f_\alpha(q,x); f_\alpha(r,y)) \limalptoinf K^{\Hardedge}_{\vec{\nu}}(q,x;r,y),
 \end{equation}
 uniformly for~$x,y$ in compact subsets of~$\Rpl$, where~$K^{\Hardedge}_{\vec{\nu}}(q,x;r,y)$ is the kernel~\eqref{eq:Meij_hardedge}.
\end{thm}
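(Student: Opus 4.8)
The plan is to substitute the explicit formula~\eqref{ker_trunc_prod} for $K^{\Trunc}_{n,\vec\nu,\vec\ell}$ into the left-hand side of~\eqref{eq:thm_hard_eq1}, perform the dilations prescribed by $f_\alpha$, multiply by the stated prefactor, and then isolate the $\alpha$-dependent part of the integrand and show it converges to the matching part of~\eqref{eq:Meij_hardedge}, uniformly along the contours and for $x,y$ in compact subsets of $\Rpl$. The calculation runs in close parallel with the proof of Theorem~\ref{thm23}: the only genuinely new feature of $K^{\Trunc}_{n,\vec\nu,\vec\ell}$ compared with $K^{\mathscr{G}}_{n,\vec\nu}$ is the extra factor $\prod_{j=1}^{q}\GammaF{\nu_j+\ell_j+\zeta}\big/\prod_{j=1}^{r}\GammaF{\nu_j+\ell_j+\sigma}$, and the dilation constants $n\prod_{j=1}^{q}(\ell_j+\nu_j)$ together with the prefactor are chosen precisely so that this extra factor drops out in the limit. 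From the proof of Theorem~\ref{thm23} I would reuse the treatment of the $j=0$ terms (where $\nu_0=1$, $\ell_0=-n$): namely the reflection-formula identity $\GammaF{\zeta-n+1}\big/\GammaF{\sigma-n+1}\cdot n^{\sigma-\zeta}\to\sin(\pi\sigma)\big/\sin(\pi\zeta)$ as $n\to\infty$ (via $\GammaF{n-\sigma}/\GammaF{n-\zeta}\sim n^{\zeta-\sigma}$), and the algebraic identity $\GammaF{1+\sigma}\sin(\pi\sigma)\big/\bigl(\GammaF{1+\zeta}\sin(\pi\zeta)\bigr)=\GammaF{-\zeta}\big/\GammaF{-\sigma}$.

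For the double-integral term, after the dilations $x\mapsto x\big/\bigl(n\prod_{j=1}^{q}(\ell_j+\nu_j)\bigr)$, $y\mapsto y\big/\bigl(n\prod_{j=1}^{r}(\ell_j+\nu_j)\bigr)$ and multiplication by the prefactor, the $\sigma,\zeta$-integrand of~\eqref{ker_trunc_prod} takes the form $\frac{\prod_{j=1}^{r}\GammaF{\nu_j+\sigma}}{\prod_{j=1}^{q}\GammaF{\nu_j+\zeta}}\,\Phi_\alpha(\sigma,\zeta)\,\frac{x^{\zeta}y^{-\sigma-1}}{\sigma-\zeta}$, where, collecting the prefactor, the powers of $n$ and of the dilation constants produced by $x^\zeta y^{-\sigma-1}$, the $j=0$ gamma ratios, and the extra factor, one checks that
\[
\Phi_\alpha(\sigma,\zeta)=\frac{\GammaF{1+\sigma}}{\GammaF{1+\zeta}}\,\frac{\GammaF{\zeta-n+1}}{\GammaF{\sigma-n+1}}\,n^{\sigma-\zeta}\ \prod_{j=1}^{q}\frac{\GammaF{\nu_j+\ell_j+\zeta}}{\GammaF{\nu_j+\ell_j}\,(\ell_j+\nu_j)^{\zeta}}\ \prod_{j=1}^{r}\frac{\GammaF{\nu_j+\ell_j}\,(\ell_j+\nu_j)^{\sigma}}{\GammaF{\nu_j+\ell_j+\sigma}}.
\]
Since $\GammaF{w+a}\big/\bigl(w^{a}\GammaF{w}\bigr)\to1$ as $w\to\infty$, each of the two products tends to $1$, while by the identities recalled above the first three factors tend to $\GammaF{-\zeta}\big/\GammaF{-\sigma}$. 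This is exactly the gamma factor in the double integral of~\eqref{eq:Meij_hardedge}, the remaining $\alpha$-independent piece $\prod_{j=1}^{r}\GammaF{\nu_j+\sigma}/\prod_{j=1}^{q}\GammaF{\nu_j+\zeta}$ and $x^{\zeta}y^{-\sigma-1}/(\sigma-\zeta)$ being already in place.

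For the first term, the dilations turn the argument $y/x$ of the Meijer $G$-function into $z_\alpha\df(y/x)\prod_{j=q+1}^{r}(\ell_j+\nu_j)^{-1}$ and, together with the $1/x$ factor and the prefactor, produce $-\,\frac{1_{r>q}}{x}\cdot\frac{\prod_{j=q+1}^{r}\GammaF{\nu_j+\ell_j}}{\prod_{j=q+1}^{r}(\ell_j+\nu_j)}\cdot G$. In Mellin--Barnes form, $G=\frac{1}{2\pi i}\int_{\gamma}\prod_{j=q+1}^{r}\frac{\GammaF{\nu_j-1+\eta}}{\GammaF{\nu_j+\ell_j-1+\eta}}\,z_\alpha^{-\eta}\,d\eta$; substituting $\GammaF{\nu_j+\ell_j-1+\eta}\sim\GammaF{\ell_j}\,\ell_j^{\,\nu_j-1+\eta}$ and $z_\alpha^{-\eta}=(y/x)^{-\eta}\prod_{j=q+1}^{r}(\ell_j+\nu_j)^{\eta}$, the spurious powers cancel up to factors tending to $1$, the surviving normalisation $\prod_{j=q+1}^{r}\GammaF{\nu_j+\ell_j}\big/\prod_{j=q+1}^{r}\bigl((\ell_j+\nu_j)\GammaF{\ell_j}\,\ell_j^{\nu_j-1}\bigr)$ tends to $1$, and one is left with $-\frac{1_{r>q}}{x}\cdot\frac{1}{2\pi i}\int_{\gamma}\prod_{j=q+1}^{r}\GammaF{\nu_j-1+\eta}\,(y/x)^{-\eta}\,d\eta$, which is the first term of~\eqref{eq:Meij_hardedge}. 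Alternatively one can appeal to the known asymptotics of the Meijer $G$-function as its upper parameters tend to infinity.

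Both interchanges of limit and integral are then justified by dominated convergence: Stirling's formula yields super-exponential decay of the $\alpha$-independent gamma factors in the imaginary direction along $S_\sigma$, $S_\zeta$, and $\gamma$, which furnishes a dominating function, while the $\alpha$-dependent ratios appearing above converge pointwise and are bounded along the contours uniformly in $\alpha$ (for $\gamma$ this uses that $\re\eta$ stays bounded, so that $|(1+\nu_j/\ell_j)^{\eta}|$ is bounded). One must also pass from the contour $S_\zeta^{(n)}$ of Fig.~\ref{contour2}, which encircles a growing set of poles of $\GammaF{\zeta-n+1}$, to the fixed contour $S_\zeta$ of Fig.~\ref{contour4}, which encircles the poles $\zeta=0,1,2,\dots$ of $\GammaF{-\zeta}$; this is handled by comparing residue expansions, the residues at poles outside any fixed window contributing a vanishing tail controlled by the same bounds. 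I expect this last step---producing $\alpha$-uniform dominating functions that absorb the extra factor and carrying the contours through the limit---to be the main technical obstacle, the algebraic manipulations being routine once the bookkeeping is set up.
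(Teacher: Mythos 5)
Your algebraic bookkeeping is correct and matches the paper's: after the dilations and the prefactor, the $j=0$ factors combine via the complement formula exactly as in~\eqref{eq:gamma_complement} to produce $\GammaF{-\zeta}/\GammaF{-\sigma}$, and the extra truncation factor is normalized into $\prod_{j}\GammaF{\nu_j+\ell_j+\zeta}\big/\bigl(\GammaF{\nu_j+\ell_j}(\nu_j+\ell_j)^{\zeta}\bigr)$ and its $\sigma$-reciprocal, which is the same reduction the paper performs (the paper additionally factorizes $\frac{1}{\sigma-\zeta}=-\int_0^1 u^{\zeta-\sigma-1}du$ and invokes Proposition~\ref{PropositionMeijFirth} to decouple the double integral into $\phi_\alpha,\psi_\alpha$ and to get uniformity in $x,y$, but that is a packaging difference, not the crux). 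The genuine gap is in your justification of the limit--integral interchange. You assert that the $\alpha$-dependent ratios ``converge pointwise and are bounded along the contours uniformly in $\alpha$'' and defer the rest to dominated convergence; but on the contour $S_\sigma$, which wraps around the whole negative real axis, this uniform boundedness is simply false. For $\re{\sigma}$ between $-(\nu_j+\ell_j)$ and $-\nu_j$ the combined factor $\GammaF{\nu_j+\ell_j}\GammaF{\nu_j+\sigma}/\GammaF{\nu_j+\ell_j+\sigma}$ grows like $e^{c\ell_j}$ (this is exactly the content and the sharpness of Lemma~\ref{lemma49}), and for $\re{\sigma}$ far below $-(\nu_j+\ell_j)$ the ratio $\GammaF{\nu_j+\ell_j}(\nu_j+\ell_j)^{\sigma}/\GammaF{\nu_j+\ell_j+\sigma}$ itself is unbounded in $\alpha$. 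Consequently there is no single $\alpha$-uniform dominating function on all of $S_\sigma$ obtained from ``boundedness of the ratios''; what is actually needed, and what the paper does, is to split $S_\sigma$ at the points $-(\nu_{q+1}+\ell_{q+1})\ge\cdots\ge-(\nu_r+\ell_r)$, apply dominated convergence only on the piece $\re{\sigma}\ge-(\nu_{q+1}+\ell_{q+1})$ (where Lemma~\ref{lemma47}, after a change of variables, does give a uniform bound, and Lemma~\ref{lemma48} controls the $n$-dependent factor), and then show by explicit estimates that the integrals over the remaining pieces vanish: the $e^{c\ell_j}$ growth from Lemma~\ref{lemma49} must be beaten by the decay coming from $(\nu_p+\ell_p)^{\sigma}$ and from $\GammaF{\nu_p+\sigma}$ via the Euler reflection formula. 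The same issue arises for your Mellin--Barnes treatment of the first term: on a fixed vertical line the bound $|\GammaF{a}/\GammaF{a+iy}|\le e^{\pi|y|/2}$ is the best $\alpha$-uniform statement available, and when $r-q=1$ it exactly cancels the decay of the single limiting factor $\GammaF{\nu_{q+1}-1+\eta}$, so a naive majorant is not integrable; the paper avoids this by writing the first term as an integral over the bent contour $S_\sigma$ and running the same splitting argument there. Since you yourself flag precisely this step as ``the main technical obstacle'' without supplying an argument, and since the uniform-boundedness claim you would rely on is false in the relevant region, the proposal does not yet constitute a proof.

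A secondary, smaller point: the passage from $S_\zeta^{(n)}$ to the fixed contour $S_\zeta$ is indeed benign (on $S_\zeta^{(n)}$ one has $|\re{\zeta}|\le n\ll\ell_j$, so Lemma~\ref{lemma44}(c) and Lemma~\ref{lemma47} give genuinely $\alpha$-uniform Gaussian-in-$\re{\zeta}$ bounds and dominated convergence applies directly), so your ``residue comparison'' detour is unnecessary; the hard side is the $\sigma$-contour, as described above.
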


Ultimately, since the gauge factor on the left-hand side of~\eqref{eq:thm_hard_eq1} does not change the law of~$\Trunc_{n,\vec{\nu}, \vec{\ell}}$, this theorem states the convergence of the corresponding determinantal point processes.

\subsubsection{Transition to extended critical kernel}
\label{subsubsect:crit_trans}
Our final result illustrates the phenomenon known as the \textit{hard-to-soft edge transition}. In a simple case of the Bessel and Airy point processes, e.g., see Borodin and Forrester~\cite[Section~4]{BorodinForrester}.

Suppose~$\vec{\nu} =(\nu,\nu,\ldots)$, and set
\begin{equation}
	K^{\Hardedge}_{\nu}(q,x;r,y) \df K^{\Hardedge}_{\vec{\nu}}(q,x;r,y).
\end{equation}
We have the following theorem.
\begin{thm}
	\label{thm25}
	Fix~$\tau, t>0$, and introduce a scaled version of the extended hard-edge kernel,
\begin{equation}
	\label{eq:thm25_eq1}
	\widehat{K}_{\nu}(\tau, x; t, y) = \frac{e^{[t \nu]\left(\log{\nu}-\frac{1}{2\nu}\right)-y}}{(\GammaF{\nu})^{[t\nu]-[\tau \nu]}} K_\nu^{\Hardedge}\left([\tau \nu],e^{[\tau \nu]\left(\log{\nu} - \frac{1}{2\nu}\right) - x};[t \nu],e^{[t \nu]\left(\log{\nu} - \frac{1}{2\nu}\right)-y}\right).
\end{equation}
Then, the following convergence takes place
	\begin{equation}
		\label{eq:th5_eq2}
		\widehat{K}_{\nu}(\tau, x; t, y) \limnutoinf {K}_{\Crit}(\tau, x; t, y),
	\end{equation}
	uniformly for~$x$ and~$y$ in compact subsets of~$\Rpl$. The kernel on the right-hand side of~\eqref{eq:th5_eq2} is the extended critical kernel~\eqref{eq:lim_kern}.
\end{thm}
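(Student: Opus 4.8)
The extended hard‑edge kernel, and hence $\widehat K_\nu$, is a sum of the Meijer $G$‑function term and a double contour integral, and the plan is to treat these two pieces separately: the first will produce the Gaussian transition density in~\eqref{eq:lim_kern}, the second its integral part. Both are governed by one elementary estimate, the second‑order expansion
\[
  \log\frac{\GammaF{\nu+w}}{\GammaF{\nu}}=c_\nu\,w+\frac{w^{2}}{2\nu}+O\!\left(\tfrac{1}{\nu^{2}}\right),
  \qquad
  c_\nu:=\log\nu-\tfrac{1}{2\nu}=\psi(\nu)+O\!\left(\tfrac{1}{\nu^{2}}\right),
\]
valid uniformly for $w$ in compacts. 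The conjugation factor and the change of variables in~\eqref{eq:thm25_eq1} are tuned precisely so that, once everything is substituted, all powers of $\GammaF{\nu}$ cancel, every $w$‑linear term with coefficient $c_\nu$ telescopes away, and the surviving quadratic term, raised to the power $[t\nu]$ (resp.\ $[\tau\nu]$), converges to the factor $e^{t\sigma^{2}/2}$ (resp.\ $e^{\tau\zeta^{2}/2}$) of the critical kernel.

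For the transition term I would write the Meijer $G$‑function through its Mellin--Barnes integral, so that, with $m:=[t\nu]-[\tau\nu]$, that term equals a prefactor times $-1_{m>0}(2\pi i)^{-1}\int_{\gamma}(\GammaF{\nu-1+w})^{m}(y_\nu/x_\nu)^{-w}\,dw$, where $x_\nu=e^{[\tau\nu]c_\nu-x}$, $y_\nu=e^{[t\nu]c_\nu-y}$. Collecting the prefactor of~\eqref{eq:thm25_eq1} and substituting $w\mapsto w+1$, every exponential and every power of $\GammaF{\nu}$ telescopes and the term collapses to $-1_{m>0}(2\pi i)^{-1}\int_{\gamma'}(\GammaF{\nu+w}/\GammaF{\nu})^{m}e^{-w(mc_\nu+x-y)}\,dw$. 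Taking $\gamma'$ to be the imaginary axis, the expansion above (with $m/\nu\to t-\tau$, the term $mc_\nu w$ annihilating the linear part) gives the pointwise limit $e^{(t-\tau)w^{2}/2-w(x-y)}$ of the integrand, while the identity $|\GammaF{\nu+iu}/\GammaF{\nu}|^{2}=\prod_{k\ge0}\bigl(1+u^{2}/(\nu+k)^{2}\bigr)^{-1}$ gives the uniform, integrable bound $|\GammaF{\nu+iu}/\GammaF{\nu}|^{m}\le e^{-c(t-\tau)u^{2}}$ needed for dominated convergence. The limiting Gaussian integral evaluates to $\bigl(2\pi(t-\tau)\bigr)^{-1/2}e^{-(x-y)^{2}/2(t-\tau)}$, and since $1_{m>0}\to1_{t>\tau}$ one recovers exactly the first term of~\eqref{eq:lim_kern}.

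For the correlation term the same telescoping rewrites the double‑integral part of $\widehat K_\nu$ as
\[
  \int_{S_\sigma}\!\frac{d\sigma}{2\pi i}\int_{S_\zeta}\!\frac{d\zeta}{2\pi i}\,
  \left(\frac{\GammaF{\sigma+\nu}}{\GammaF{\nu}}\right)^{[t\nu]}\!e^{-[t\nu]c_\nu\sigma}
  \left(\frac{\GammaF{\nu}}{\GammaF{\zeta+\nu}}\right)^{[\tau\nu]}\!e^{[\tau\nu]c_\nu\zeta}\,
  \frac{\GammaF{-\zeta}}{\GammaF{-\sigma}}\,\frac{e^{-x\zeta+y\sigma}}{\sigma-\zeta}.
\]
The $\zeta$‑contour of Fig.~\ref{contour4}, encircling $\{0,1,2,\dots\}$, does not depend on $\nu$ and is already the contour $S_\zeta$ of Fig.~\ref{contour3}. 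The $\sigma$‑contour of Fig.~\ref{contour4} is a Hankel‑type loop around the poles $\{-\nu,-\nu-1,\dots\}$ of $(\GammaF{\sigma+\nu})^{[t\nu]}$, and the plan is to deform it to the line $\{\re\sigma=-c\}$ of Fig.~\ref{contour3}, showing that the part of the loop running out past the receding poles contributes negligibly and that on $\{\re\sigma=-c\}$ the integrand is dominated uniformly in $\nu$ — again by the product formula for $|\GammaF{\nu+iu}/\GammaF{\nu}|$ together with Stirling bounds for $\GammaF{-\zeta}$ and for $1/\GammaF{-\sigma}$ along the respective contours, the quadratic factors $\tfrac t2\sigma^{2}$ and $-\tfrac\tau2\zeta^{2}$ winning. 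On the deformed contours the expansion yields the pointwise limit $\frac{e^{t\sigma^{2}/2}\GammaF{-\zeta}}{e^{\tau\zeta^{2}/2}\GammaF{-\sigma}}\cdot\frac{e^{-x\zeta+y\sigma}}{\sigma-\zeta}$, and dominated convergence upgrades it to convergence of the double integral, i.e.\ to the second term of~\eqref{eq:lim_kern}.

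The main obstacle is exactly this $\sigma$‑contour manipulation and its uniform bound: $(\GammaF{\sigma+\nu}/\GammaF{\nu})^{[t\nu]}$ is exponentially small while $e^{-[t\nu]c_\nu\sigma}$ is exponentially large, and their product (with the remaining factors) must be kept under control along all of $S_\sigma$, including the portion that tracks the poles receding to $-\infty$. The natural device is a steepest‑descent/Laplace analysis: writing the $\sigma$‑ and $\zeta$‑integrands as $e^{[t\nu]\Phi(\sigma)}$ and $e^{-[\tau\nu]\Phi(\zeta)}$ with $\Phi(w)=\log(\GammaF{w+\nu}/\GammaF{\nu})-c_\nu w$, one has $\Phi'(0)=\psi(\nu)-c_\nu=O(\nu^{-2})$ and $\Phi''(0)=\psi'(\nu)\sim\nu^{-1}>0$, so $0$ is an asymptotically quadratic critical point; one then deforms each contour through a neighbourhood of $0$ along the descent direction dictated by $\pm\tfrac12 w^{2}$ — which is exactly the local shape of the contours in Fig.~\ref{contour3} — estimates the remainder of the contour by the product bound, and accounts for the poles of $\GammaF{-\zeta}$ enclosed by $S_\zeta$ in the limit precisely as they appear in $K_\Crit$. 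Finally, since the conjugation and the change of variables in~\eqref{eq:thm25_eq1} amount to a gauge transformation composed with a monotone reparametrisation of $\Rpl$, convergence of the kernels yields convergence of the associated determinantal point processes, closing the circle with Theorems~\ref{thm23}--\ref{thm24}.
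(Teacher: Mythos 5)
Your proposal is correct and is essentially the paper's own route: after the telescoping you describe, the paper rewrites $\widehat K_\nu$ in exactly your form (see~\eqref{eq:thm25_proof_eq1}, with $e^{pF_\nu(\eta)}$ playing the role of your $e^{p\Phi}$), deforms $S_\sigma$ from Fig.~\ref{contour4} to the vertical line of Fig.~\ref{contour3}, and then reruns the dominated-convergence scheme of the proof of Theorem~\ref{thm22}, whose Lemma~\ref{lemma44} and Lemma~\ref{lemma45} encode precisely your quadratic expansion $[t\nu]F_\nu(w)\to \tfrac{t}{2}w^2$ and your product-formula control of $\left|\GammaF{\nu+iu}/\GammaF{\nu}\right|$; your direct domination of the double integral (the contours being separated, so $|\sigma-\zeta|$ is bounded below) replaces the paper's factorization via $\frac{1}{\sigma-\zeta}=-\int_0^{+\infty}e^{u(\sigma-\zeta)}du$ and Proposition~\ref{PropositionMeijFirth}, which is a harmless cosmetic difference.

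One correction: with $m=[t\nu]-[\tau\nu]$, the single majorant $\left|\GammaF{\nu+iu}/\GammaF{\nu}\right|^{m}\le e^{-c(t-\tau)u^{2}}$ is false once $|u|\gg\nu$, where the left-hand side decays only exponentially, roughly like $e^{-c\,m|u|}$ (the sum $\sum_{k\ge0}\log\bigl(1+u^{2}/(\nu+k)^{2}\bigr)$ grows linearly, not quadratically, in $|u|$ for $|u|\ge\nu$). The same product formula does give a Gaussian bound for $|u|\le\nu$ and an $e^{-c|u|}$ bound for $|u|\ge\nu$, and the sum of the two is a $\nu$-independent integrable majorant; this two-regime splitting is exactly the decomposition of $S_\sigma$ into the zones $|\im\sigma|\le\nu^{1-\delta}$, intermediate, and $|\im\sigma|\ge\nu^{1+\delta}$ carried out in Section~\ref{sect_42}. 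The analogous care is needed on the unbounded $\zeta$-loop, where for large $\re\zeta$ it is the factorial decay of $\GammaF{-\zeta}$ and of $\GammaF{\zeta+\nu}^{-[\tau\nu]}$, rather than the quadratic term, that beats $\nu^{[\tau\nu]\zeta}$. With these adjustments your argument goes through and coincides with the paper's proof.
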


It is worth noticing that this convergence can be interpreted as the convergence of certain line ensembles. For details, we refer the reader to the paper by Corwin and Hammond~\cite{CorwinHammond}.

\section{Last passage percolation} 
\label{sect_3}
\subsection{Young diagrams, tableaux, and integer arrays}
\label{subsect3.1}
We recall three main objects that we will extensively use further. The first is the set~$\Y$ of \textit{partitions}, which is a collection of all weakly decreasing sequences of non-negative integers,
\begin{equation}
  \label{eq:shp}
  \lambda=(\lambda_1, \lambda_2, \ldots), \quad  \lambda_j \in \N \cup \{0\}, \quad \lambda_1 \ge \lambda_2 \ge \ldots,
\end{equation}
of finite \textit{weight}~$|\lambda|$,
\begin{equation}
  |\lambda| = \sum \limits_{j=1}^\infty \lambda_j.
\end{equation}
The number of non-zero elements of~$\lambda$ is denoted by~$\ell(\lambda)$ and called the \textit{length} of~$\lambda$.

Partitions are in one-to-one correspondence with Young diagrams, where each partition~$\lambda$ is identified with the \textit{Young diagram} via
\begin{equation}
  \label{eq:Youngd}
  \lambda \mapsto \{(i,j)\in \N \times \N| 1 \le j \le \lambda_i,\ i = 1, \ldots, \ell(\lambda) \}.
\end{equation}
 Because of this identification, we are going to use the term partitions and the term Young diagrams interchangeably.

The second object is the set~$\T$ of \textit{semi-standard Young tableaux}. Given a partition~$\lambda$, a corresponding tableau~$\Tabl{T}$ is an array on positive integers,
\begin{equation}
  \Tabl{T} = (\Tabl{T}_{i,j})_{\substack{i=1,\ldots,\ell(\lambda)\\j=1,\ldots,\lambda_i}},\quad \Tabl{T}_{i,j} \in \N,
\end{equation}
such that~$\Tabl{T}_{i,j}$ increases strictly with respect to~$i$ and increases weakly with respect to~$j$. It is said that the shape of~$\Tabl{T}$ is~$\lambda$,
\begin{equation}
  \Shape{\Tabl{T}} = \lambda.
\end{equation}

Let~$\lambda$ and~$\mu$ be two partitions such that~$\lambda \succeq \mu$ (i.e., $\lambda_i \ge \mu_i$, $i \in \N$). A \textit{semi-standard skew tableau~$\Tabl{T}$ of shape~$\lambda/\mu$} is an array of positive integers
\begin{equation}
  \Tabl{T} = (\Tabl{T}_{i,j})_{\substack{i=1,\ldots,\ell(\lambda)\\j=\mu_i+1,\ldots,\lambda_i}}, \quad \Tabl{T}_{i,j} \in \N,
\end{equation}
that are called \textit{labels} and that increases strictly in~$i$ and weakly in~$j$. We use the notation
\begin{equation}
  \Shape{\Tabl{T}} = \lambda/\mu.
\end{equation}
Note that any semi-standard tableau is a skew tableau of shape~$\lambda/\varnothing$, where~$\varnothing=(0,0, \ldots)$ is the empty Young diagram.

A tableau~$\Tabl{T}$ of shape~$\lambda/\mu$ can be identified with the sequence of partitions
\begin{equation}
  \label{eq:tabl_as_seq}
  \mu=\upsilon^{(0)} \preceq \upsilon^{(1)} \ldots \preceq \upsilon^{(q)}=\lambda
\end{equation}
such that each skew diagram~$\upsilon^{(i)}/\upsilon^{(i-1)}$ is a horizontal strip, i.e., has at most one square in each column. Every strip~$\upsilon^{(i)}/\upsilon^{(i-1)}$, $i=1,\ldots,q$, is identified with the entries of~$\Tabl{T}$ that contain the number~$i$. This gives an alternative definition of a tableau. For more details, see Macdonald~\cite{Macdonald}.

The \textit{type} of a skew tableau~$\Tabl{T}$ is defined by
\begin{equation}
  \Type{\Tabl{T}} = (\#\{(i,j)|\Tabl{T}_{i,j}=1\}, \#\{(i,j)|\Tabl{T}_{i,j}=2\}, \ldots),
\end{equation}
where~$\#\{(i,j)|\Tabl{T}_{i,j}=a\}$, $a \in \N$, is the number of elements of~$\Tabl{T}$ equal to~$a$. The corresponding \textit{Schur function} is defined as
\begin{equation}
	\label{eq:Schur_funct}
	s_{\lambda/\mu}(x) = \sum\limits_{\Shape{\Tabl{T}} = \lambda/\mu} x^{\Type{\Tabl{T}}},
\end{equation}
where~$x = (x_1, x_2,\ldots)$, the sum is taken over all semi-standard skew Young tableaux~$\Tabl{T}$ of the specified size, and 
\begin{equation}
	x^{\Type{\Tabl{T}}} = (x_1^{\Type{\Tabl{T}_1}}, x_2^{\Type{\Tabl{T}_2}}, \ldots).
\end{equation}
If~$\mu=\varnothing$, we simply write~$s_\lambda(x)$.

The last but not least object is the set~$\A$ of infinite arrays with non-negative integer entries
\begin{equation}
	A=\left(A_{i,j}\right)_{i,j \in \N}, \quad A_{i,j} \in \N \cup \{0\},
\end{equation}
such that the \textit{weight}~$|A|$ is finite,
\begin{equation}
  |A| = \sum_{i,j=1}^\infty A_{i,j} < +\infty.
\end{equation}

The \textit{row type} of~$A$ is defined by
\begin{equation}
  \Row{A}=\left(\sum\limits_{j=1}^\infty A_{1,j}, \sum\limits_{j=1}^\infty A_{2,j}, \ldots \right),
\end{equation}
and the \textit{column type} of~$A$ is defined by
\begin{equation}
  \Col{A}=\left(\sum\limits_{i=1}^\infty A_{i,1}, \sum\limits_{i=1}^\infty A_{i,2}\ldots \right).
\end{equation}

Since finite arrays with non-negative entries are embedded in~$\A$ in a natural way (extension by zeros), we always think of finite arrays as being elements of~$\A$.

The following theorem is a well-known version of the Robinson--Schensted--Knuth (RSK) correspondence, see Beik, Deift, and Suidan~\cite{BaikDeiftSuidanBook}. 
\begin{thm}
  \label{TheoremRSK}
  There exist a bijective map~$\RSK\!\!: \A \to \T \times \T$,
  \begin{equation}
    A \overset{\RSK}{\mapsto} (\Tabl{P}(A),\Tabl{Q}(A)),
  \end{equation}
given constructively by the RSK algorithm, such that
  \begin{equation}
    \Shape{\Tabl{P}(A)} = \Shape{\Tabl{Q}(A)}
  \end{equation}
  and
  \begin{equation}
    \Col{A}=\Type{\Tabl{P}(A)},\quad \Row{A}=\Type{\Tabl{Q}(A)}.
  \end{equation}
\end{thm}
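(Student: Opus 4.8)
The plan is to make the map $\RSK$ explicit through Schensted row insertion and then to exhibit its inverse. Since $|A|<\infty$ for $A\in\A$, only finitely many entries $A_{i,j}$ are nonzero, so one may encode $A$ as a finite \emph{two-line array} (biword): list each pair $(i,j)$ with multiplicity $A_{i,j}$ and arrange the resulting $N=|A|$ pairs as columns with top row $i_1\le i_2\le\cdots\le i_N$ and bottom row $j_1,\dots,j_N$, sorted so that within each maximal block of equal top entries the corresponding bottom entries are weakly increasing. This arrangement is uniquely determined by $A$, and conversely any such two-line array determines a unique $A\in\A$; this normalization is the bookkeeping device on which everything hinges. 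One then runs row insertion: starting from the empty tableau, insert $j_1,\dots,j_N$ successively into a tableau $\Tabl{P}$ by the bumping rule, and at the $k$th step write $i_k$ into the box of a second tableau $\Tabl{Q}$ that was just created in $\Tabl{P}$. This defines $\RSK(A)=(\Tabl{P}(A),\Tabl{Q}(A))$.

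There are three properties to verify. The equality $\Shape{\Tabl{P}(A)}=\Shape{\Tabl{Q}(A)}$ is immediate, since $\Tabl{Q}$ records precisely the successive boxes added to $\Tabl{P}$. That $\Tabl{P}(A)$ is semi-standard is the standard invariance of row insertion: each insertion keeps rows weakly increasing and columns strictly increasing. For $\Tabl{Q}(A)$ one invokes Knuth's row-bumping lemma: if $x\le x'$, then inserting $x$ and then $x'$ creates the second new box strictly to the right of, and weakly above, the first. Applied to a maximal run $i_k=\cdots=i_{k+p}$, whose bottom entries $j_k\le\cdots\le j_{k+p}$ are weakly increasing by construction, this shows the boxes of $\Tabl{Q}$ carrying a fixed value occupy pairwise distinct columns, i.e.\ form a horizontal strip; boxes carrying strictly larger values are created strictly later, and since the cells of a growing Young diagram are added down each column and left-to-right along each row, these later cells land in lower rows, whence $\Tabl{Q}(A)$ is semi-standard (strictness down columns uses exactly the horizontal-strip fact). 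Finally, row insertion preserves the multiset of inserted letters, so the number of entries of $\Tabl{P}(A)$ equal to $a$ is $\#\{k:j_k=a\}=\sum_i A_{i,a}=\Col{A}_a$, giving $\Col{A}=\Type{\Tabl{P}(A)}$; likewise the number of entries of $\Tabl{Q}(A)$ equal to $a$ is $\#\{k:i_k=a\}=\sum_j A_{a,j}=\Row{A}_a$, so $\Row{A}=\Type{\Tabl{Q}(A)}$.

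It remains to prove that $\RSK$ is a bijection onto the set of pairs of semi-standard tableaux of equal shape, which I would do by writing down the inverse. Given such a pair $(\Tabl{P},\Tabl{Q})$, locate the box of $\Tabl{Q}$ holding the largest entry and, among those, the one in the rightmost column; by semi-standardness of $\Tabl{Q}$ this box is an outer corner of the common shape. Apply reverse row insertion to $\Tabl{P}$ from that corner to expel a letter $j$, record the pair $(i,j)$ where $i$ is the value deleted from $\Tabl{Q}$, delete both boxes, and iterate until both tableaux are empty. Reading the expelled pairs in the reverse order of extraction produces a two-line array of exactly the normalized form above, hence an array $A\in\A$, and one checks that $\RSK$ and this procedure undo one another. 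I expect the one genuinely delicate point to be the tie-breaking convention: one must verify simultaneously that in the forward direction equal recording labels are deposited only within a horizontal strip (so that $\Tabl{Q}(A)\in\T$), and that in the backward direction the ``largest value, then rightmost corner'' rule deterministically peels boxes off in precisely the reverse order used by the forward algorithm --- this compatibility is exactly what forces the two algorithms to be mutually inverse. Everything else reduces to Knuth's bumping lemmas and content preservation of insertion, for which I refer to Macdonald~\cite{Macdonald} and to Baik, Deift, and Suidan~\cite{BaikDeiftSuidanBook}.
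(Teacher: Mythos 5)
Your proposal is correct, but note that the paper itself offers no proof of Theorem~\ref{TheoremRSK}: it is quoted as a well-known form of the RSK correspondence with a pointer to Baik, Deift, and Suidan~\cite{BaikDeiftSuidanBook}, so there is no internal argument to compare against beyond that citation. What you wrote is precisely the standard construction found in those references and in Macdonald~\cite{Macdonald} --- encode $A$ as the lexicographically ordered two-line array, build $\Tabl{P}$ by Schensted row insertion of the bottom row and $\Tabl{Q}$ by recording the top row in the newly created boxes, use Knuth's row-bumping lemma to see that equal recording labels form a horizontal strip, and invert by reverse insertion peeled from the corner holding the largest (and, among ties, rightmost) entry of $\Tabl{Q}$ --- together with the content-preservation identities $\Col{A}=\Type{\Tabl{P}(A)}$, $\Row{A}=\Type{\Tabl{Q}(A)}$, and the understanding that ``bijective onto $\T\times\T$'' means onto pairs of equal shape. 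One small tightening: your sentence that later cells ``land in lower rows'' is only accurate within a fixed column; the clean way to conclude that $\Tabl{Q}(A)$ is semi-standard is to note that $\Tabl{Q}$ is built by adjoining, for the labels $1,2,\ldots$ in increasing order, skew shapes each of which is a horizontal strip, which is exactly the alternative characterization of a semi-standard tableau the paper records in~\eqref{eq:tabl_as_seq}; this also handles the tie-breaking compatibility you flag, since the ``largest label, rightmost column'' rule removes the cells of the top horizontal strip from right to left, reversing the forward order of creation.
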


The RSK algorithm produces~$\Tabl{P}(A)$ by carrying out sequential RSK insertions row by row, going along each row from left to right. Each time the~$j$-coordinate of the corresponding element of the array~$A$ is inserted in~$\Tabl{P}(A)$ as many times as prescribed by~$A_{i,j}$,
\begin{equation}
  \label{eq:proof32_eq3}
  \Tabl{P}(A) = (\ldots(\varnothing \gets g_1)\gets g_2\ldots) \gets g_N, \quad g_s\in \N,
\end{equation}
where
\begin{equation}
	N = |A|,
\end{equation}
and the~$i$ coordinate is inserted at the same position but into~$\Tabl{Q}(A)$ instead.

Introduce the operator~$d_{k}(\cdot): \T \to \T$ that erases all the elements of a tableau that are greater than~$k$. By using the alternative definition of a tableau via~\eqref{eq:tabl_as_seq} and by erasing the elements one by one starting from the maximal, we see that for every~$\Tabl{T} \in \T$ and~$k \in \N$, the object~$d_{k}(\Tabl{T})$ is also a semi-standard tableau. 
We are going to need the following simple lemma, which is not obvious due to the ``non-commutative" nature of the RSK insertions. Nevertheless, the proof is quite simple and is given below for the reader's convenience.
\begin{lem}
  \label{lemma:commutativity}
  \begin{equation}
    d_{k}((\ldots(\varnothing \gets j_1)\gets j_2\ldots) \gets j_N) = (\ldots(\varnothing \overset{\le k}{\longleftarrow} j_1) \overset{\le k}{\longleftarrow} j_2\ldots) \overset{\le k}{\longleftarrow} j_N,
  \end{equation}
  where the operation~$\overset{\le k}{\longleftarrow} j$ RSK-inserts~$j$ into the tableau if ~$j \le k$ and does nothing otherwise.
\end{lem}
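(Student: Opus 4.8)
The plan is to prove the identity by induction on $N$, the number of RSK insertions, exploiting the fact that both sides are built up one letter at a time. Write $\Tabl{P}_N = (\ldots(\varnothing \gets j_1) \ldots) \gets j_N$ for the tableau obtained by the ordinary insertions, and $\Tabl{P}^{\le k}_N = (\ldots(\varnothing \overset{\le k}{\longleftarrow} j_1) \ldots) \overset{\le k}{\longleftarrow} j_N$ for the tableau obtained by the restricted insertions; the claim is $d_k(\Tabl{P}_N) = \Tabl{P}^{\le k}_N$. The base case $N=0$ is trivial since $d_k(\varnothing) = \varnothing$. For the inductive step, assume $d_k(\Tabl{P}_{N-1}) = \Tabl{P}^{\le k}_{N-1}$; we must show $d_k(\Tabl{P}_{N-1} \gets j_N) = d_k(\Tabl{P}_{N-1}) \overset{\le k}{\longleftarrow} j_N$, i.e. that erasing-then-inserting (restrictedly) commutes with inserting-then-erasing. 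This reduces the whole statement to a single-step lemma about one RSK insertion, which is the natural place to do the real work.

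The key step is therefore the single-insertion identity $d_k(\Tabl{T} \gets j) = d_k(\Tabl{T}) \overset{\le k}{\longleftarrow} j$ for an arbitrary semi-standard tableau $\Tabl{T}$. I would prove this by tracking the bumping route. If $j > k$, then during the insertion of $j$ into $\Tabl{T}$ every letter that gets bumped is $\ge j > k$ (bumping only displaces entries that are $\ge$ the inserted letter, and in each successive row the bumped value weakly increases), so the entire bumping route lies in the region of entries exceeding $k$; erasing all entries $> k$ from $\Tabl{T} \gets j$ thus yields exactly $d_k(\Tabl{T})$, matching the right-hand side where $\overset{\le k}{\longleftarrow} j$ does nothing. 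If $j \le k$, then the first row of $d_k(\Tabl{T})$ is an initial segment of the first row of $\Tabl{T}$, and since all erased entries are $> k \ge j$, inserting $j$ bumps the same first-row entry in both $\Tabl{T}$ and $d_k(\Tabl{T})$ (or appends $j$ at the end in both, if $j$ is weakly larger than every surviving first-row entry but smaller than some erased one — here one checks the appended $j \le k$ sits correctly). One then argues inductively down the rows: the bumped letter $j' \le$ the previous inserted letter, and the tail of each row consisting of entries $> k$ is untouched, so the insertion route in $\Tabl{T} \gets j$, after erasure, coincides with the route in $d_k(\Tabl{T}) \overset{\le k}{\longleftarrow} j$. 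It is convenient here to use the alternative description of $d_k$ via the chain~\eqref{eq:tabl_as_seq}, noting that $d_k$ simply truncates the chain $\varnothing = \upsilon^{(0)} \preceq \cdots \preceq \upsilon^{(q)}$ at level $\upsilon^{(k)}$, and that the horizontal-strip condition is preserved.

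The main obstacle is the bookkeeping in the case $j \le k$ when the bumping route in $\Tabl{T}$ wants to interact with cells that have been erased in $d_k(\Tabl{T})$ — one must verify that this never actually forces a discrepancy, i.e. that the route stays within the surviving part until it terminates, and that termination (appending a new box) happens at a consistent place. This is exactly the ``non-commutative'' subtlety flagged before the lemma statement: a priori, restricting the tableau could shorten a row and change where the insertion lands. The resolution is the monotonicity of the bumped values along the route together with the fact that every erased entry exceeds every entry that could be bumped by a letter $\le k$; making this precise row by row is the crux. Once the single-insertion identity is established, the induction on $N$ closes immediately, and applying it with $j_s = g_s$ gives the statement of the lemma as written.
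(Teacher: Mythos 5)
Your proposal is correct and follows essentially the same route as the paper: induction on the number of insertions, reduced to the one-step identity $d_k(\Tabl{T}\gets j)=d_k(\Tabl{T})\overset{\le k}{\longleftarrow} j$, which you prove by tracking the bumping route and using that the bumped labels increase strictly along it, so everything on the route past the first label exceeding~$k$ is erased---precisely the paper's ``stopped insertion'' argument. The only slip is the parenthetical claim that $j$ ``appends at the end in both'' tableaux when $j\le k$ is weakly larger than all surviving first-row entries but smaller than an erased one (in $\Tabl{T}$ it bumps the erased entry rather than appending), but since that bumped entry and all subsequently bumped ones exceed~$k$, the discrepancy vanishes after applying $d_k$, exactly as your monotonicity remark indicates.
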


\begin{proof}
  We prove the lemma by using induction with respect to the number of insertions. The base case,
  \begin{equation}
    d_{k}(\varnothing \gets j_1) = \varnothing \overset{\le k}{\longleftarrow} j_1 
  \end{equation}
  is trivial. It remains to carry out the inductive step. Suppose that
  \begin{equation}
    \label{eq:proof32_indhyp}
    d_{k}(\Tabl{T}_s) = \widetilde{\Tabl{T}}_s,
  \end{equation}
  where
  \begin{equation}
    \Tabl{T}_s \df (\ldots(\varnothing \gets j_1)\gets j_2\ldots) \gets j_s
  \end{equation}
  and
  \begin{equation}
    \widetilde{\Tabl{T}}_s \df (\ldots(\varnothing \overset{\le k}{\longleftarrow} j_1) \overset{\le k}{\longleftarrow} j_2\ldots) \overset{\le k}{\longleftarrow} j_s.
  \end{equation}

  Consider the insertion path~$I(\Tabl{T}_s \gets j_{s+1})$ of the element~$j_{s+1}$ into the tableau~$\Tabl{T}_s$. Namely, this path marks the positions of the elements in~$\Tabl{T}_s$, together with their labels, that have changed as a result of the RSK insertion of~$j_{s+1}$. Note that the labels of the marked elements form a strictly increasing sequence as we go along the path. Define the ``stopped" RSK insertion~$\overset{\phantom{2}\le k}\Longleftarrow$, which only takes into account the elements of~$I(\Tabl{T}_s \gets j_{s+1})$ with the labels less or equal to~$k$. This is to say, the RSK insertion stops right before it encounters the first element strictly greater than~$k$ that otherwise would have been bumped into the next row. In particular, if~$j_{s+1} >k$ then~$\overset{\phantom{2}\le k}\Longleftarrow j_{s+1}$ does nothing. Note that this operation preserve the structure of a semi-standard Young tableaux.

  Clearly,
  \begin{equation}
    \label{eq:proof32_eq5}
    d_{k}(\Tabl{T}_s\gets j_{s+1}) = d_{k}(\Tabl{T}_s \overset{\phantom{2}\le k}\Longleftarrow j_{s+1} ) = (d_{k}(\Tabl{T}_s) \overset{\le k}{\longleftarrow} j_{s+1}),
  \end{equation}
  and by invoking the inductive hypothesis~\eqref{eq:proof32_indhyp}, we arrive at
  \begin{equation}
    d_{k}((\ldots(\varnothing \gets j_1)\gets j_2\ldots) \gets j_{s+1}) = (\ldots(\varnothing \overset{\le k}{\longleftarrow} j_1) \overset{\le k}{\longleftarrow} j_2\ldots) \overset{\le k}{\longleftarrow} j_{s+1}.
  \end{equation}
  This concludes the induction and finishes the proof.
\end{proof}

Before we proceed, note that the erasing operator~$d_k(\cdot)$, defined earlier on~$\T$, can be also defined on the space of arrays~$\A$. For a given array, this operator replaces all the columns after the $k$th with zeros. Having in mind the previous lemma, we can now establish that the RSK operator commutes with the erasing operator.
\begin{prop}
	\label{prop:prop3.3}
	Let~$A^{(k)}$ be given by~\eqref{eq:Barrayblk}--\eqref{eq:Aarray}, and set~$\RSK(A^{(k)}) = (\Tabl{P}(A^{(k)}),\Tabl{Q}(A^{(k)}))$, where
  \begin{equation}
	  \Shape{\Tabl{P}(A^{(k)})} = \Shape{\Tabl{Q}(A^{(k)})}.
  \end{equation}
  Then, 
  \begin{equation}
	  \Tabl{P}(A^{(k)}) = d_{L_k}(\Tabl{P}(A^{(k+1)})).
  \end{equation}
\end{prop}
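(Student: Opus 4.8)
\emph{Proof proposal.} The plan is to deduce the proposition directly from Lemma~\ref{lemma:commutativity}, since the only substantive combinatorial input needed is the interplay between the erasing operator and a single RSK insertion, which that lemma already supplies. First, I would note that the statement is purely combinatorial and does not involve the randomness: since $A^{(k+1)}=\begin{pmatrix}B^{(1)} & \cdots & B^{(k+1)}\end{pmatrix}$ and $A^{(k)}=\begin{pmatrix}B^{(1)} & \cdots & B^{(k)}\end{pmatrix}$ share their first $k$ blocks, the array $A^{(k)}$, regarded as an element of~$\A$ with the block structure of~\eqref{eq:Aarray}, is obtained from $A^{(k+1)}$ by replacing every column with index larger than~$L_k$ by zeros; in the notation of the erasing operator on arrays, $A^{(k)}=d_{L_k}(A^{(k+1)})$. (When the clocks are integer-valued, as in Subsection~\ref{subsect3.2}, this is literally a statement about~$\A$; the exponential case of Subsection~\ref{subsect3.3} is reached afterwards by the usual geometric-to-exponential limiting procedure.)

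Next, I would recall from~\eqref{eq:proof32_eq3} that $\Tabl{P}(A^{(k+1)})$ is produced by reading $A^{(k+1)}$ row by row and, within each row, left to right, thereby forming an insertion word $g_1,\ldots,g_N$ with $N=|A^{(k+1)}|$, in which the column index~$j$ occurs $A^{(k+1)}_{i,j}$ times in the block contributed by row~$i$, and then performing the successive insertions $\Tabl{P}(A^{(k+1)})=(\ldots(\varnothing\gets g_1)\gets g_2\ldots)\gets g_N$. The key elementary observation is that the insertion word of $d_{L_k}(A^{(k+1)})$ is precisely the subword of $(g_s)_{s=1}^N$ obtained by retaining, in the same order, those letters with $g_s\le L_k$: zeroing the columns with index exceeding $L_k$ removes from each row's contribution exactly the letters that are $>L_k$ and leaves the remaining letters untouched and in place. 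Consequently
\begin{equation}
  \Tabl{P}(A^{(k)})=(\ldots(\varnothing\overset{\le L_k}{\longleftarrow} g_1)\overset{\le L_k}{\longleftarrow} g_2\ldots)\overset{\le L_k}{\longleftarrow} g_N,
\end{equation}
because $\overset{\le L_k}{\longleftarrow}$ acts as an ordinary RSK insertion on letters $\le L_k$ and as the identity on letters $>L_k$.

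Finally, I would apply Lemma~\ref{lemma:commutativity} with the parameter $k$ there replaced by $L_k$ and with $j_1,\ldots,j_N$ taken to be $g_1,\ldots,g_N$, which yields
\begin{equation}
  (\ldots(\varnothing\overset{\le L_k}{\longleftarrow} g_1)\ldots)\overset{\le L_k}{\longleftarrow} g_N
  = d_{L_k}\big((\ldots(\varnothing\gets g_1)\ldots)\gets g_N\big)
  = d_{L_k}\big(\Tabl{P}(A^{(k+1)})\big).
\end{equation}
Combining the two displays gives $\Tabl{P}(A^{(k)})=d_{L_k}(\Tabl{P}(A^{(k+1)}))$, as claimed. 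I do not expect a serious obstacle here beyond Lemma~\ref{lemma:commutativity} itself: the only step requiring a little care is the bookkeeping in the middle paragraph, namely verifying that deleting the trailing columns of the array corresponds exactly to discarding the letters $>L_k$ from the RSK insertion word while preserving the relative order of all the others.
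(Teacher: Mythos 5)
Your proposal is correct and follows essentially the same route as the paper: write $\Tabl{P}(A^{(k+1)})$ as a word of RSK insertions, observe that the restricted insertions $\overset{\le L_k}{\longleftarrow}$ applied to that same word produce $\Tabl{P}(A^{(k)})$, and invoke Lemma~\ref{lemma:commutativity}. The only difference is that you spell out the bookkeeping (that zeroing the columns beyond $L_k$ deletes exactly the letters $>L_k$ from the insertion word while preserving the order of the rest), which the paper leaves implicit in its word ``noticing''.
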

\begin{proof}
	Fix~$k$ and write~$\Tabl{P}(A^{(k+1)})$ as a sequence of RSK insertions,
  \begin{equation}
	  \Tabl{P}(A^{(k+1)}) = (\ldots(\varnothing \gets g_1)\gets g_2\ldots) \gets g_{|A^{(k+1)}|}.
  \end{equation}
  Then, the proof follows by applying Lemma~\ref{lemma:commutativity} and noticing that
  \begin{equation}
	  \Tabl{P}(A^{(k)}) = (\ldots(\varnothing \overset{\le L_k}{\longleftarrow} g_1) \overset{\le L_k}{\longleftarrow} g_2\ldots) \overset{\le L_k}{\longleftarrow} g_{|A^{(k+1)}|}.
  \end{equation}
\end{proof}

\subsection{Arrays of geometric random variables and Schur point processes}
\label{subsect3.2}
In this section we assume that elements of
\begin{equation}
  \label{eq:Akdef_geomsect}
  A^{(k)} = (B^{(1)}\cdots B^{(k)}) 
\end{equation}
in~\eqref{eq:Aarray} are independent random variables distributed according to the geometric law,
\begin{equation}
  \label{eq:Bgeom}
  B^{(k)}_{i,j} \sim \Geom(1-x_i y_j^{(k)}), 
\end{equation}
that is,
\begin{equation}
  \label{eq:geom_weights}
  \prob{(B^{(k)})_{i,j} = s} = (1 - x_i y_j^{(k)})(x_i y_j^{(k)})^s, \quad s \in \N \cup \{0\},
\end{equation}
where~$x_i, y_j^{(k)} \in (0,1)$, $i = 1, \ldots, n$, and $j = 1, \ldots, \ell_k$. Note that the~$x_i$ are the same for different blocks~$B^{(k)}$. 

Theorem~\ref{TheoremRSK} allows one to define a family of random elements
\begin{equation}
  \label{eq:Lambdaprocess}
  \Lambda^{(k)}= \Shape{\Tabl{P}(A^{(k)})},\quad k \in \N,
\end{equation}
which take on values in the space of Young diagrams~$\Y$ and form a discrete-time stochastic process~$(\Lambda^{(k)},\ k \in \N)$. The next proposition provides explicit formulas for finite-dimensional distributions of this process.
\begin{prop}
  \label{prop:Schur_proc}
  Let elements of~$A^{(k)}$ be independent geometric random variables as in~\eqref{eq:Akdef_geomsect} and~\eqref{eq:Bgeom}. Then, finite dimensional distributions of~$(\Lambda^{(k)},\ k \in N)$ are given by
  \begin{equation}
    \label{eq:Schur_proc}
    \prob{\Lambda^{(k)} = \lambda^{(k)},\ k=1,\ldots,q} = \frac{1}{Z_{\Schur}} s_{\lambda^{(q)}}(x) \prod_{k=1}^q s_{\lambda^{(k)}/\lambda^{(k-1)}} (y^{(k)}),
  \end{equation}
  where~$x = (x_1,\ldots,x_n)$, $y^{(k)}=(y_1^{(k)},\ldots, y_{L_k}^{(k)})$, by definition $\lambda^{(0)} = \varnothing$, and~$s_{s_{\lambda^{(k)}/\lambda^{(k-1)}}} = 0$ if~$\lambda^{(k-1)} \preceq \lambda^{(k)}$ is not satisfied.  The normalization factor is given by
  \begin{equation}
    \label{eq:norm_fact}
    \frac{1}{Z_{\Schur}} = \prod_{\substack{1 \le i \le n\\1 \le k \le q\\1\le j \le \ell_k}} \left(1-x_iy_j^{(k)}\right).
  \end{equation}
\end{prop}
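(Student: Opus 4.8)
The plan is to combine the RSK correspondence of Theorem~\ref{TheoremRSK} with the compatibility between RSK and column erasure from Proposition~\ref{prop:prop3.3}, reducing everything to the combinatorial definition~\eqref{eq:Schur_funct} of (skew) Schur functions. First, since the entries of~$A^{(q)}$ are independent geometric variables, the probability of observing a prescribed array~$A \in \A$ supported on the~$n \times L_q$ rectangle factorizes as
\[
	\prob{A^{(q)} = A} = \prod_{i,j}(1 - x_i y_j)(x_i y_j)^{A_{i,j}} = \frac{1}{Z_{\Schur}}\, x^{\Row{A}}\, y^{\Col{A}},
\]
where~$y_j$ denotes the weight attached to the~$j$-th column of~$A^{(q)}$ (namely~$y^{(m)}_{j - L_{m-1}}$ when that column lies in the block~$B^{(m)}$) and~$Z_{\Schur}^{-1}$ is exactly the product in~\eqref{eq:norm_fact}. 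By Theorem~\ref{TheoremRSK}, $\Row{A} = \Type{\Tabl{Q}(A)}$ and~$\Col{A} = \Type{\Tabl{P}(A)}$, so $\prob{A^{(q)} = A} = Z_{\Schur}^{-1}\, x^{\Type{\Tabl{Q}(A)}}\, y^{\Type{\Tabl{P}(A)}}$.

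Next I would rephrase the target event in terms of~$\Tabl{P}(A^{(q)})$ alone. By Proposition~\ref{prop:prop3.3} one has~$\Tabl{P}(A^{(k)}) = d_{L_k}\!\bigl(\Tabl{P}(A^{(q)})\bigr)$ for all~$k \le q$, hence~$\Lambda^{(k)} = \Shape{d_{L_k}(\Tabl{P}(A^{(q)}))}$; therefore~$\{\Lambda^{(k)} = \lambda^{(k)},\ k=1,\ldots,q\}$ is precisely the event that~$\Tabl{P} := \Tabl{P}(A^{(q)})$ is a semi-standard tableau of shape~$\lambda^{(q)}$ with entries in~$\{1,\ldots,L_q\}$, all of whose erasures~$d_{L_k}(\Tabl{P})$ have shape~$\lambda^{(k)}$. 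Summing the displayed identity over all arrays~$A$ in this event and using that~$\RSK$ is a bijection, the sum decouples over the two tableaux (the constraint set is a product set): the~$\Tabl{Q}$-tableau is unconstrained apart from~$\Shape{\Tabl{Q}} = \Shape{\Tabl{P}} = \lambda^{(q)}$ and having entries in~$\{1,\ldots,n\}$, so~$\sum_{\Tabl{Q}} x^{\Type{\Tabl{Q}}} = s_{\lambda^{(q)}}(x)$ with~$x = (x_1,\ldots,x_n)$ by~\eqref{eq:Schur_funct}, leaving~$Z_{\Schur}^{-1}\, s_{\lambda^{(q)}}(x)\, \sum_{\Tabl{P}} y^{\Type{\Tabl{P}}}$, the last sum being over the shape-constrained~$\Tabl{P}$'s.

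Finally I would evaluate~$\sum_{\Tabl{P}} y^{\Type{\Tabl{P}}}$ via the chain-of-partitions description~\eqref{eq:tabl_as_seq}: a tableau~$\Tabl{P}$ of shape~$\lambda^{(q)}$ with entries~$\le L_q$ is a chain~$\varnothing = \upsilon^{(0)} \preceq \cdots \preceq \upsilon^{(L_q)} = \lambda^{(q)}$ of horizontal-strip steps, and~$d_{L_k}(\Tabl{P})$ has shape~$\upsilon^{(L_k)}$, so the constraint becomes~$\upsilon^{(L_k)} = \lambda^{(k)}$ for every~$k$ (automatic for~$k=q$, and for~$k=0$). Cutting the chain at~$L_0 = 0 < L_1 < \cdots < L_q$, its~$k$-th segment~$\lambda^{(k-1)} = \upsilon^{(L_{k-1})} \preceq \cdots \preceq \upsilon^{(L_k)} = \lambda^{(k)}$, which has~$\ell_k$ steps, is (after shifting labels by~$L_{k-1}$) exactly a semi-standard skew tableau of shape~$\lambda^{(k)}/\lambda^{(k-1)}$ with entries~$\le \ell_k$, while~$y^{\Type{\Tabl{P}}}$ splits as the product over~$k$ of the corresponding monomials in the block weights~$y^{(k)}$; summing over the segments independently produces~$\prod_{k=1}^q s_{\lambda^{(k)}/\lambda^{(k-1)}}(y^{(k)})$, with a segment contributing~$0$ whenever~$\lambda^{(k-1)} \preceq \lambda^{(k)}$ fails (consistent both with the convention in the statement and with the fact that~$(\Lambda^{(k)})$ is almost surely~$\preceq$-increasing, since erasing fewer entries enlarges a tableau). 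Combining the three factors yields~\eqref{eq:Schur_proc}.

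Everything else — interchanging the order of summation, and the absolute convergence of the Schur series, which holds because~$x_i, y^{(k)}_j \in (0,1)$ — is routine; the one step that needs genuine care is the decoupling in the second paragraph, namely that after restricting to~$\{\Lambda^{(k)} = \lambda^{(k)},\ k = 1,\ldots,q\}$ the RSK sum really does split into an unconstrained~$\Tabl{Q}$-sum times a~$\Tabl{P}$-sum constrained only through the shapes of successive erasures. This is precisely where Proposition~\ref{prop:prop3.3}, and behind it Lemma~\ref{lemma:commutativity}, is essential, together with the identification of column erasure at~$L_k$ with truncation of the partition chain at index~$L_k$.
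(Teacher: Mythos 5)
Your proposal is correct and follows essentially the same route as the paper's proof: factorizing the geometric weights into $Z_{\Schur}^{-1}x^{\Row{A}}y^{\Col{A}}$, transferring the event to $\Tabl{P}(A^{(q)})$ via Proposition~\ref{prop:prop3.3} (hence Lemma~\ref{lemma:commutativity}), decoupling the $\Tabl{P}$- and $\Tabl{Q}$-sums through the RSK bijection, and evaluating the constrained $\Tabl{P}$-sum by cutting the partition chain at $L_1<\cdots<L_q$ to obtain the product of skew Schur functions. No gaps; the decoupling step you flag as delicate is handled exactly as in the paper.
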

\begin{remark}
	Clearly, the process~$\left(\Lambda^{(k)},\ k \in \N\right)$ is Markov.
\end{remark}

For a fixed~$k \in \N$, the law of~$\Lambda^{(k)}$  is a measure on~$\Y$ known as the \textit{Schur measure}, e.g., see Baik, Deift, and Suidan~\cite{BaikDeiftSuidanBook}. Likewise, the process~$(\Lambda^{(k)},\ k \in \N)$ is often called the \textit{Schur stochastic process}. 
\begin{proof}
	If~$\lambda^{(1)} \preceq \ldots \preceq \lambda^{(q)}$ is not satisfied, immediately
  \begin{equation}
	  \prob{\Lambda^{(k)} = \lambda^{(k)},\ k=1,\ldots,q} = 0.
  \end{equation}

  Now, suppose~$\lambda^{(1)} \preceq \ldots \preceq \lambda^{(q)}$ holds, and write
  \begin{equation}
    \begin{aligned}
      P &= \prob{\Lambda^{(k)} = \lambda^{(k)},\ k=1,\ldots,q} = \prob{\Shape{\Tabl{P}(A^{(k)})} = \lambda^{(k)}, k=1,\ldots,q} \\
      &= \sum \limits_{\substack{\Shape{\Tabl{P}(b^{(1)}\ldots b^{(k)})}= \lambda^{(k)}\\k=1,\ldots, q}} \prob{B^{(k)} = b^{(k)},\ k=1,\ldots,q}.
    \end{aligned}
  \end{equation}
  By using the independence of the weights, we see that
  \begin{equation}
    \begin{aligned}
      P &= \sum \limits_{\substack{\Shape{\Tabl{P}(b^{(1)}\ldots b^{(k)})}= \lambda^{(k)}\\k=1,\ldots, q}}  \prod_{\substack{1 \le i \le n\\1 \le k \le q\\1\le j \le \ell_k}} \prob{B^{(k)}_{i,j} = b^{(k)}_{i,j}}\\
      &=\sum \limits_{\substack{\Shape{\Tabl{P}(b^{(1)}\ldots b^{(k)})}= \lambda^{(k)}\\k=1,\ldots, q}}  \prod_{\substack{1 \le i \le n\\1 \le k \le q\\1\le j \le \ell_k}} (1-x_iy_j^{(k)}) (x_iy_j^{(k)})^{b_{i,j}^{(k)}}.
    \end{aligned}
  \end{equation}
  Collecting the factors we find that
  \begin{equation}
    \begin{aligned}
      &P =\frac{1}{Z_\Schur}\sum \limits_{\substack{\Shape{\Tabl{P}(b^{(1)}\ldots b^{(k)})}= \lambda^{(k)}\\k=1,\ldots, q}}  x^{\sum\limits_{k=1}^q\Row{b^{(k)}}} \prod_{k=1}^q  (y^{(k)})^{\Col{b^{(k)}}}\\
      &=\frac{1}{Z_\Schur}\sum \limits_{\substack{\Shape{\Tabl{P}(a^{(k)})}= \lambda^{(k)}\\k=1,\ldots, q}}  x^{\Row{a^{(q)}}} y^{\Col{a^{(q)}}}.
    \end{aligned}
  \end{equation}
  Using Proposition~\ref{prop:prop3.3} and Theorem~\ref{TheoremRSK}, we can write
  \begin{equation}
    \begin{aligned}
      P &=\frac{1}{Z_\Schur}\sum \limits_{\substack{\Shape{d_{L_k}\left(\Tabl{P}(a^{(q)})\right)}= \lambda^{(k)}\\k=1,\ldots, q}}  x^{\Type{\Tabl{Q}(a^{(q)})}} y^{\Type{\Tabl{P}(a^{(q)})}}\\
      &= \frac{1}{Z_\Schur}\sum \limits_{\substack{\Shape{d_{L_k}(\Tabl{P})}= \lambda^{(k)}\\ \Shape{\Tabl{Q}}=\lambda^{(q)}\\ k=1,\ldots, q}}  x^{\Type{\Tabl{Q}}} y^{\Type{\Tabl{P}}}.
    \end{aligned}
  \end{equation}
  The latter sum is over all semi-standard tableaux~$\Tabl{P}$ and~$\Tabl{Q}$ of the specified shape. The sum factorizes and, due to~\eqref{eq:Schur_funct}, we arrive at
  \begin{equation}
	  P= \frac{1}{Z_\Schur}s_{\lambda^{(q)}}(x)\sum \limits_{\substack{\Shape{d_{L_k}(\Tabl{P})}= \lambda^{(k)}\\ k=1,\ldots, q}} y^{\Type{\Tabl{P}}}.
  \end{equation}
  Using the alternative definition of the tableaux~$\Tabl{P}$ via~\eqref{eq:tabl_as_seq}, we can write
  \begin{equation}
	  P= \frac{1}{Z_\Schur}s_{\lambda^{(q)}}(x)\sum \limits_{\substack{\lambda^{(k-1)} =\upsilon^{(L_{k-1})}\preceq \ldots \preceq \upsilon^{(L_k)}=\lambda^{(k)}\\k=1,\ldots,q}} y_1^{|\upsilon^{(1)}| - |\upsilon^{(0)}|} \cdot \ldots \cdot y_{L_q}^{|\upsilon^{(L_q)}| - |\upsilon^{(L_{q-1})}|},
  \end{equation}
  where we set~$L_0=0$ and~$\lambda^{(0)} = \varnothing$, and~$\upsilon^{(j)} \in \Y$ are the corresponding Young diagrams such that~$\upsilon^{(j)}/\upsilon^{(j-1)}$ is a horizontal strip.

  Rearranging the sum of products, we find
  \begin{equation}
	  P= \frac{1}{Z_\Schur}s_{\lambda^{(q)}}(x) \prod_{k=1}^q \sum\limits_{\Shape{P}=\lambda^{(k)}/\lambda^{(k-1)}} (y^{(k)})^{\Type{P}} =\frac{1}{Z_\Schur}s_{\lambda^{(q)}}(x) \prod_{k=1}^q s_{\lambda^{(k)}/\lambda^{(k-1)}}(y^{(k)}).
  \end{equation}
 This concludes the proof.
\end{proof}

\subsection{Arrays of exponential random variables and limits of Schur point processes}
\label{subsect3.3}
In this section, we will show an analog of Proposition~\ref{prop:Schur_proc} for the case of exponential weights. In order to do so, we approximate the exponential law by a sequence of geometric laws. Indeed, let~$X_N$ be a random variable distributed geometrically,
\begin{equation}
  X_N \sim \mathrm{Geom}\left(e^{\frac{a}{N}}-1\right), \quad a>0,\ N \in \N,
\end{equation}
that is,
\begin{equation}
	\prob{X_N= k} = \left(e^{\frac{a}{N}}-1\right) \left(2-e^{\frac{a}{N}}\right)^k, \quad k \in \N \cup \{0\}.
\end{equation}
Then, as it is easy to check, convergence in distribution takes palace,
\begin{equation}
  \frac{X_N}{N} \underset{N \to \infty}{\overset{\mathrm{d}}{\longrightarrow}} \Exp(a),
\end{equation}
that is,
\begin{equation}
  \prob{\frac{X_N}{N} \le x} \underset{N \to \infty}{\longrightarrow}\int \limits_{0}^x a e^{-a s}\, ds, \quad x>0.
\end{equation}

We consider the model~\eqref{eq:Akdef_geomsect}--\eqref{eq:geom_weights} and choose the~$x_i$ and~$y_j$ in the following way
\begin{equation}
  \label{eq:geom_approx_exp}
  x_i(N)  = e^{- \frac{i-1}{N}}, \quad y_j^{(k)}(N) = e^{-\frac{\nu_k+j-1}{N}},\quad N \in \N,
\end{equation}
where $\nu_k \in \N$. To emphasize dependence on~$N$ of all objects, we are going to use~$N$ as an extra argument. The formula~\eqref{eq:geom_approx_exp} implies
\begin{equation}
  \frac{B^{(k)}_{i,j}(N)}{N} \overset{\mathrm{d}}{\underset{N \to \infty}{\longrightarrow}} \Exp(\nu_k+i+j-2).
\end{equation}

The following proposition holds.
\begin{prop}
  \label{prop:fidi_lambda_conv}
  Let~$(\Lambda^{(k)}(N),\ k \in \N)$ be a $N$-indexed family of stochastic processes, where $\Lambda^{(k)}(N)= \Shape{\Tabl{P}(A^{(k)}(N))}$ and~$\Tabl{P(\cdot)}$ is defined as in Theorem~\ref{TheoremRSK}. Assume that elements of the arrays~$A^{(k)}(N) = \left(B^{(1)}(N) \ldots B^{(k)}(N)\right)$ are independent geometric random variables,
  \begin{equation}
    B^{(k)}_{i,j}(N) \sim \Geom(1-x_i(N) y^{(k)}_j(N)), \quad i=1, \ldots, n;\ j=1, \ldots, \ell_k;\ k \in \N;
  \end{equation}
  where~$x_i(N)$ and~$y^{(k)}_j(N)$ are defined in~\eqref{eq:geom_approx_exp} for some~$\nu_k \ge 1$.

  Then, the convergence of finite-dimensional distributions takes place
  \begin{equation}
	  \left(\frac{\Lambda^{(k)}(N)}{N},\ k \in \N\right) \overset{\mathrm{fd}}{\underset{N \to \infty}{\longrightarrow}} \left(\widetilde{\Lambda}^{(k)},\ k \in \N\right).
  \end{equation}
  The distribution of $\left(\widetilde{\Lambda}^{(1)},\ldots, \widetilde{\Lambda}^{(q)}\right)$, $q \in \N$, is given by
  \begin{equation}
    \label{eq:prop_Mq}
    \begin{aligned}
      &\mathfrak{M}_{q}\left(d\lambda^{(1)}, \ldots, d\lambda^{(q)}\right) =\frac{1}{Z_{n,q}} \det{\left( e^{-(j-1) \lambda_k^{(q)}}\right)}_{j,k=1}^n\\
      &\times \prod_{s=1}^q \det{\left(\frac{e^{(\nu_s+(n-1)\delta_{s,1})\lambda_k^{(s-1)}}}{e^{(\nu_s+\ell_s-1-(n-k)\delta_{s,1})\lambda_j^{(s)}}}\left(e^{\lambda_j^{(s)}} - e^{\lambda_k^{(s-1)}} \right)_+^{\ell_s-1-(n-1)\delta_{s,1}}\right)}_{j,k=1}^n  d\lambda^{(1)} \cdots d\lambda^{(q)}
    \end{aligned}
  \end{equation}
where 
  \begin{equation}
    \label{eq:propZnq}
    {Z}_{n,q} = \prod_{j=1}^n \GammaF{j} \times \prod_{\substack{1 \le j \le n\\1\le k \le q}} \frac{\GammaF{\nu_k+j-1} \GammaF{\ell_k-(n-j)\delta_{k,1}}} {\GammaF{\nu_k+\ell_k+j-1}},
  \end{equation}
  $x_+=\max\{x,0\}$, $\lambda^{(k)} = (\lambda^{(k)}_1,\ldots, \lambda^{(k)}_{n}) \in (\Rpl)^n$ for~$k \in \N$, and~$\delta_{k,1}$ is the Kronecker's delta. It is understood that~$\lambda^{(0)}=(0,0,\ldots)$.
\end{prop}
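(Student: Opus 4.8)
The plan is to start from Proposition~\ref{prop:Schur_proc}, which identifies the joint law of $\big(\Lambda^{(1)}(N),\ldots,\Lambda^{(q)}(N)\big)$ with the explicit Schur process~\eqref{eq:Schur_proc} carrying the specializations $x_i(N)=e^{-(i-1)/N}$ and $y^{(k)}_i(N)=e^{-(\nu_k+i-1)/N}$ of~\eqref{eq:geom_approx_exp}, and to let $N\to\infty$. Since every finite-dimensional marginal of $\big(\Lambda^{(k)}(N)/N\big)_{k\in\N}$ is a coordinate projection of such a vector for $q$ large, and since $|\Lambda^{(k)}(N)|=|A^{(k)}(N)|$ is a sum of geometric variables (so $|\Lambda^{(k)}(N)|/N$, and hence the whole vector, is tight), the asserted convergence in distribution will follow from the local limit
\[
  N^{nq}\,\prob{\Lambda^{(k)}(N)=\lambda^{(k)},\ k=1,\ldots,q}\ \longrightarrow\ \frac{d\mathfrak{M}_q}{d\lambda^{(1)}\cdots d\lambda^{(q)}}\big(\mu^{(1)},\ldots,\mu^{(q)}\big),
\]
to be proved for every admissible integer sequence with $\lambda^{(k)}_j/N\to\mu^{(k)}_j$ at $\mu$ in generic position (the $\mu^{(k)}$ strictly interlacing, with distinct coordinates at each level), together with a uniform bound on compacts; upgrading this to convergence of box probabilities is a routine Riemann-sum argument (which also shows $\mathfrak{M}_q$ is a probability measure).

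To prove the local limit I would write each Schur factor of~\eqref{eq:Schur_proc} as a determinant and compute entry-wise asymptotics. The terminal factor $s_{\lambda^{(q)}}(x(N))$ is handled by the bialternant in the $n$ variables $x_i(N)$: since $x_i(N)-x_{i'}(N)\sim(i'-i)/N$ the Vandermonde denominator is $\sim N^{-\binom n2}\prod_{j=1}^n\Gamma(j)$, while each numerator entry $x_i(N)^{\lambda^{(q)}_j+n-j}\to e^{-(i-1)\mu^{(q)}_j}$, giving $N^{\binom n2}\big(\prod_j\Gamma(j)\big)^{-1}$ times the first determinant of~\eqref{eq:prop_Mq}. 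For the blocks $k\ge 2$ I would use the Jacobi--Trudi identity $s_{\lambda^{(k)}/\lambda^{(k-1)}}(y^{(k)})=\det\big(h_{\lambda^{(k)}_j-\lambda^{(k-1)}_a-j+a}(y^{(k)})\big)_{j,a=1}^n$ (with $h_r$ the complete homogeneous symmetric polynomial; here $a$ is the column index of the $n\times n$ block, to avoid the clash with the level index $k$), together with the evaluation $h_r(y^{(k)}(N))=w^{\nu_k r}\prod_{i=1}^{\ell_k-1}\frac{1-w^{r+i}}{1-w^{i}}$, $w=e^{-1/N}$, obtained from homogeneity of $h_r$ and the Gaussian-binomial value of $h_r(1,w,\ldots,w^{\ell_k-1})$. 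For $r/N\to c$ this is $\sim\frac{N^{\ell_k-1}}{\Gamma(\ell_k)}e^{-\nu_k c}(1-e^{-c})^{\ell_k-1}$ if $c>0$ and vanishes if $c<0$, and with $c=\mu^{(k)}_j-\mu^{(k-1)}_a$ and $1-e^{-(\mu^{(k)}_j-\mu^{(k-1)}_a)}=e^{-\mu^{(k)}_j}(e^{\mu^{(k)}_j}-e^{\mu^{(k-1)}_a})$ one recognizes $\frac{N^{\ell_k-1}}{\Gamma(\ell_k)}$ times the $(j,a)$-entry of the $k$-th determinant of~\eqref{eq:prop_Mq}; hence $s_{\lambda^{(k)}/\lambda^{(k-1)}}(y^{(k)}(N))$ acquires the prefactor $\big(N^{\ell_k-1}/\Gamma(\ell_k)\big)^n$ in front of that determinant.

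The block $k=1$ will require separate treatment, and I expect it to be the crux. Here $\lambda^{(0)}=\varnothing$, so the factor is the honest Schur function $s_{\lambda^{(1)}}(y^{(1)}(N))$ in the $\ell_1\ge n$ variables $y^{(1)}_i(N)$, and Jacobi--Trudi now degenerates at leading order (the column index entering only through a bounded shift). Instead I would use the bialternant in all $\ell_1$ variables with $\lambda^{(1)}$ padded by zeros: the $n$ columns from the nonzero parts converge to $\big(e^{-(\nu_1+i-1)\mu^{(1)}_j}\big)$, while the remaining $\ell_1-n$ columns, carrying the bounded exponents $0,1,\ldots,\ell_1-n-1$, degenerate---after replacing their monomial basis $1,y,\ldots,y^{\ell_1-n-1}$ by $1,y-1,\ldots,(y-1)^{\ell_1-n-1}$ and using $y^{(1)}_i(N)-1\sim-(\nu_1+i-1)/N$---into a confluent block at the point $1$. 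The resulting confluent Vandermonde, with simple nodes $e^{-\mu^{(1)}_1},\ldots,e^{-\mu^{(1)}_n}$ and a node of multiplicity $\ell_1-n$ at $1$, evaluates by the confluent Vandermonde formula (and after division by the full Vandermonde $\prod_{1\le i<i'\le\ell_1}(y^{(1)}_i(N)-y^{(1)}_{i'}(N))\sim N^{-\binom{\ell_1}2}\prod_{j=1}^{\ell_1}\Gamma(j)$) to $N^{n\ell_1-\binom{n+1}2}\big(\prod_{j=1}^n\Gamma(\ell_1-n+j)\big)^{-1}$ times the $k=1$ determinant of~\eqref{eq:prop_Mq} (with $\lambda^{(0)}=0$), the $\Gamma$-factor appearing as the ratio $\prod_{j\le\ell_1-n}\Gamma(j)\big/\prod_{j\le\ell_1}\Gamma(j)$---exactly the $\delta_{k,1}$-corrected $\Gamma$'s of $Z_{n,q}$.

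Finally I would assemble everything. The Schur-process prefactor is $\prod(1-x_i(N)y^{(k)}_j(N))\sim N^{-nL_q}\prod_{k=1}^q\prod_{i=1}^n\Gamma(\nu_k+\ell_k+i-1)/\Gamma(\nu_k+i-1)$ because $1-e^{-(\nu_k+i+j-2)/N}\sim(\nu_k+i+j-2)/N$. Collecting the exponents of $N$,
\[
  -nL_q+\binom n2+\Big(n\ell_1-\binom{n+1}2\Big)+\sum_{k=2}^{q}n(\ell_k-1)=-nq,
\]
so $N^{nq}\prob{\cdots}$ has a finite limit in which the $\mu$-dependent determinants combine into the product of~\eqref{eq:prop_Mq} and all the leftover $\Gamma$- and factorial constants (from the two Vandermondes, the Gaussian binomials, the confluent block, and $Z_{\Schur}$) combine into $1/Z_{n,q}$ as in~\eqref{eq:propZnq}. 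The hard part will be the $k=1$ block---carrying out the confluent Vandermonde evaluation rigorously, signs included, in place of the clean Gaussian-binomial asymptotics available for $k\ge2$---together with the constant/sign bookkeeping needed to match the collected $\Gamma$'s to $Z_{n,q}$ exactly; by contrast the uniform domination on compacts behind the Riemann-sum step is harmless, being controlled by the tightness of $|\Lambda^{(k)}(N)|/N$.
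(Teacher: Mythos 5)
Your proposal follows the same overall strategy as the paper: start from Proposition~\ref{prop:Schur_proc}, prove a local limit $N^{nq}\,\prob{\Lambda^{(k)}(N)=\lambda^{(k)},\,k\le q}\to f_q$ for the scaled Schur-process weights, and upgrade to convergence of box probabilities by a Riemann-sum argument (your exponent bookkeeping $-nL_q+\binom n2+n\ell_1-\binom{n+1}{2}+\sum_{k\ge2}n(\ell_k-1)=-nq$ and the normalization asymptotics agree with the paper's computation). The genuine difference is in how the factor-wise asymptotics are obtained: the paper treats only the bialternant for $s_{\lambda^{(q)}}(x(N))$ directly (with a Hadamard-inequality argument for uniformity) and \emph{cites} Borodin--Gorin--Strahov for the two key limits of $s_{\lambda^{(k)}/\lambda^{(k-1)}}(y^{(k)}(N))$, $k\ge2$, and of $s_{\lambda^{(1)}}(y^{(1)}(N))$, whereas you rederive them from scratch -- via Jacobi--Trudi plus the principal specialization $h_r(1,w,\ldots,w^{\ell_k-1})$ (Gaussian binomial) for $k\ge2$, and via a padded bialternant with a confluent Vandermonde block at $1$ for $k=1$. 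Your entrywise limits do reproduce the cited formulas (including the $\Gamma(\ell_1-n+j)$ factors and the $(\,\cdot\,)_+$ cutoff), so the plan is sound; its cost is exactly the part you flag as the crux, which the paper avoids by citation, and its benefit is self-containment. One small correction of emphasis: tightness of $|\Lambda^{(k)}(N)|/N$ only localizes the sum to a compact box and certifies that the limit is a probability measure; the uniform (or at least dominated) bound of $N^{nq}\prob{\cdot}$ \emph{inside} the box, which you rightly list as something to be proved, must come from the same entrywise estimates (as in the paper's use of Hadamard's inequality and the uniformity of the cited limits), not from tightness.
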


\begin{remark}
	Recall that~$A^{(k)}$ is of size~$n \times L_k$. Thus, by noticing that~$\Tabl{Q}(A^{(k)})$ can have at most~$n$ rows, we see that~$\Lambda^{(k)}$ takes values in $\{ \lambda \in \Y|\ \ell(\lambda) \le n\} \subset \Y$, that is, $\Lambda^{(k)}_j =0$ for~$j > n$. Therefore, we can regard~$\Lambda^{(k)}$ and~$\widetilde{\Lambda}^{(k)}$ as random vectors in~$\R^{n}$.
\end{remark}

\begin{proof}  
  We use the following notation,
  \begin{equation}
    \label{eq:Mnqfrak}
    f_{q}^{(N)}\left(\lambda^{(1)}, \ldots, \lambda^{(q)}\right) \df \frac{1}{Z_\Schur(N)}s_{\lambda^{(q)}}(x(N)) \prod_{k=1}^q s_{\lambda^{(k)}/\lambda^{(k-1)}} (y^{(k)}(N)),
  \end{equation}
  where the normalizing factor~$Z_\Schur(N)$ is given by~\eqref{eq:norm_fact} with~$x_i$ and~$y_j$ from~\eqref{eq:geom_approx_exp}, and 
  \begin{equation}
    \label{eq:propprooffqdef}
    \begin{aligned}
      &f_{q}\left(\lambda^{(1)}, \ldots, \lambda^{(q)}\right) \df \frac{1}{{Z}_{n,q}} \det{\left( e^{-(i-1) \lambda_j^{(q)}}\right)}_{i,j=1}^n\\
      &\times \prod_{k=1}^q \det{\left(\frac{e^{(\nu_k+(n-1)\delta_{k,1})\lambda_j^{(k-1)}}}{e^{(\nu_k+\ell_k-1-(n-j)\delta_{k,1})\lambda_i^{(k)}}}\left(e^{\lambda_i^{(k)}} - e^{\lambda_j^{(k-1)}} \right)_+^{\ell_k-1-(n-1)\delta_{k,1}}\right)}_{i,j=1}^n,
    \end{aligned}
  \end{equation}
  with~$Z_{n,q}$ given by~\eqref{eq:propZnq}.

  Set
  \begin{equation}
    \label{eq:lambdahat}
    \widehat{\lambda}^{(k)}(N) = [N \lambda^{(k)}], \quad k=1, \ldots, q, 
  \end{equation}
  where~$[\cdot]$ is the integer part applied component-wise.

  We will find the asymptotics of~$f_{q}^{(N)}\left(\widehat{\lambda}^{(1)}(N), \ldots, \widehat{\lambda}^{(q)}(N)\right)$ as~$N \to \infty$. First, by the determinantal representation of Schur polynomials, e.g., see Macdonald~\cite{Macdonald}, one has 
  \begin{equation}
    \label{eq:ineq1}
    \frac{s_{\widehat{\lambda}^{(q)}(N)}(x(N))}{N^{\frac{n(n-1)}{2}}}  = \frac{\det{\left( (1-\frac{i-1}{N})^{[N \lambda_j^{(q)}] -j + n}\right)}_{i,j=1}^n}{\prod\limits_{i<j}^n(j-i)} \underset{N \to \infty}{\longrightarrow} \frac{\det{\left( e^{-(i-1) \lambda_j^{(q)}}\right)}_{i,j=1}^n}{\prod\limits_{i=1}^n \GammaF{i}}.
  \end{equation}
 Due to Hadamard's inequality, \eqref{eq:ineq1} holds uniformly for~$\lambda^{(k)}_i$, $k=1,\ldots,q$, $i=1, \ldots, n$ in compact subsets of~$\Rpl$.
  
  The following asymptotics can be found in Borodin, Gorin, and Strahov~\cite{BorodinGorinStrahov},
  \begin{equation}
    \label{eq:ineq2}
    \begin{aligned}
      \frac{s_{\widehat{\lambda}^{(k)}(N)/\widehat{\lambda}^{(k-1)}(N)} (y^{(k)}(N))}{N^{n(\ell_k-1)}} \underset{N \to \infty}{\longrightarrow} &\frac{1}{(\GammaF{\ell_k})^n} \prod \limits_{i=1}^n \frac{e^{-\nu_k\lambda_i^{(k)}}}{e^{-(\nu_k+\ell_k-1)\lambda_i^{(k-1)}}}\\
      &\det{\left(\left(e^{-\lambda_j^{(k-1)}}-e^{-\lambda_i^{(k)}} \right)_+^{\ell_k-1}\right)}_{i,j=1}^n,
    \end{aligned}
  \end{equation}
  \begin{equation}
    \label{eq:ineq3}
    \frac{s_{\widehat{\lambda}^{(1)}(N)}(y^{(1)}(N))}{N^{n \ell_1-\frac{n(n+1)}{2}}} \underset{N \to \infty}{\longrightarrow} \frac{\prod\limits_{i=1}^ne^{-\nu_1 \lambda_i^{(1)}}\left(1-e^{-\lambda_i^{(1)}}\right)^{\ell_1-n}}{\prod\limits_{j=1}^n \GammaF{\ell_1-n+j}}  \prod\limits_{1 \le i<j\le n}\left(e^{-\lambda_j^{(1)}}-e^{-\lambda_i^{(1)}}\right),
  \end{equation}
  where the limit is also uniform for~$\lambda^{(k)}_i$, $k=1,\ldots,q$, $i=1, \ldots, n$, in compact subsets of~$\Rpl$.  Note that cruder estimates for the left-hand side of~\eqref{eq:ineq2}, as in~\eqref{eq:ineq1}, are not sufficient.

  The asymptotics of the normalizing factor follows from~\eqref{eq:norm_fact} and~\eqref{eq:geom_approx_exp},
  \begin{equation}
    \label{eq:ineq4}
    \begin{aligned}
      \frac{N^{n L_q}}{Z_\Schur(N)} = &N^{n L_q}\prod_{\substack{1 \le i \le n\\1 \le k \le q\\1 \le j \le \ell_k}} \left(1-x_iy_j^{(k)}\right) = N^{n L_q}\prod_{\substack{1 \le i \le n\\1 \le k \le q\\1 \le j \le \ell_k}} \left(1-e^{-\frac{\nu_k+i+j-2}{N}}\right) \\
      &\underset{N \to \infty}{\longrightarrow} \prod_{\substack{1 \le i \le n\\1 \le k \le q\\1 \le j \le \ell_k}} (i+j+\nu_k-2) = \prod_{\substack{1 \le i \le n \\ 1 \le k \le q}} \frac{\GammaF{i+\ell_k+\nu_k-1}}{\GammaF{i+\nu_k-1}}.
    \end{aligned}
  \end{equation}

  Plugging~\eqref{eq:ineq1}--\eqref{eq:ineq4} into~\eqref{eq:Mnqfrak} and recalling~\eqref{eq:lambdahat}, we see that after simple algebraic manipulations, for all the~$\lambda^{(j)}_i \ge 0$, one has
  \begin{equation}
    \label{eq:Mfrak_fin_asympt}
    N^{q n}\, f_{q}^{(N)}\left(\widehat{\lambda}^{(1)}(N), \ldots, \widehat{\lambda}^{(q)}(N)\right) \underset{N \to \infty}{\longrightarrow} f_{q}\left({\lambda}^{(1)}, \ldots, {\lambda}^{(q)}\right).
  \end{equation}

  Since the limits in~\eqref{eq:ineq1}--\eqref{eq:ineq4} are uniform and the limiting expressions are continuous in the~$\lambda^{(k)}$, \eqref{eq:Mfrak_fin_asympt} also holds uniformly for the~$\lambda^{(k)}_i$ in compact subsets of~$\Rpl$.

  Now, from~\eqref{eq:Schur_proc} in Proposition~\ref{prop:Schur_proc}, we have
  \begin{equation}
    \prob{\frac{\Lambda^{(k)}(N)}{N} \preceq v^{(k)},\ k=1, \ldots, q} = \sum \limits_{\substack{ \lambda^{(1)}\preceq\ldots\preceq\lambda^{(q)}\\ \lambda^{(\ell)} \preceq N v^{(\ell)}\\ \ell=1,\ldots,q }} f_{q}^{(N)}\left(\lambda^{(1)}, \ldots, \lambda^{(q)}\right).
  \end{equation}

  	Simple estimates show that 
	\begin{equation}
	  \begin{aligned}
	  	S := &\left|\sum \limits_{\substack{\lambda^{(1)}\preceq\ldots\preceq\lambda^{(q)}\\ \lambda^{(\ell)} \preceq N v^{(\ell)}\\ \ell=1,\ldots,q }} \left(f_{q}^{(N)}\left(\lambda^{(1)}, \ldots, \lambda^{(q)}\right) - \frac{1}{N^{n q}}f_{q}\left(\frac{\lambda^{(1)}}{N}, \ldots, \frac{\lambda^{(q)}}{N}\right)\right)\right|\\
		&\le \frac{1}{N^{nq}} \sum \limits_{\substack{\lambda^{(1)}\preceq\ldots\preceq\lambda^{(q)}\\ \lambda^{(\ell)} \preceq N v^{(\ell)}\\\ell=1,\ldots,q }} \sup \limits_{\substack{\lambda^{(1)}\preceq\ldots\preceq\lambda^{(q)}\\ \lambda^{(\ell)} \preceq N v^{(\ell)}\\\ell=1,\ldots,q }}\left|N^{n q}\, f_{q}^{(N)}\left(\lambda^{(1)}, \ldots, \lambda^{(q)}\right) - f_{q}\left(\frac{\lambda^{(1)}}{N}, \ldots, \frac{\lambda^{(q)}}{N}\right)\right|.
	  \end{aligned}
  \end{equation}
  The formula~\eqref{eq:Mfrak_fin_asympt} implies
  \begin{equation}
    \begin{aligned}
      &S \le C(s^{(1)},\ldots,s^{(q)}) \sup \limits_{\substack{\lambda^{(1)}\preceq\ldots\preceq\lambda^{(q)}\\ \lambda^{(\ell)} \preceq v^{(\ell)}\\\ell=1,\ldots,q }}\left|N^{n q}\, f_{q}^{(N)}\left(\widehat{\lambda}^{(1)}(N), \ldots, \widehat{\lambda}^{(q)}(N)\right) - f_{q}\left(\lambda^{(1)}, \ldots, \lambda^{(q)}\right)\right| \quad\underset{N \to \infty}{\longrightarrow} 0,
    \end{aligned}
  \end{equation}
  where the last supremum is over all vectors~$\lambda^{(\ell)}$, not necessarily over those with integer components, and as before, the relation~$\preceq$ is understood component-wise.

  Therefore, we arrive at
  \begin{equation}
    \label{eq:lim_riemann_sum}
    \begin{aligned}
      &\lim_{N \to \infty} \prob{\frac{\Lambda^{(k)}(N)}{N} \preceq v^{(k)},\ k=1, \ldots, q} = \lim_{N \to \infty} \sum \limits_{\substack{\lambda^{(1)}\preceq\ldots\preceq\lambda^{(q)}\\ \lambda^{(\ell)} \preceq N v^{(\ell)}\\\ell=1,\ldots,q }} \frac{1}{N^{n q}}f_{q}\left(\frac{\lambda^{(1)}}{N}, \ldots, \frac{\lambda^{(q)}}{N}\right)\\
      &=\int\limits_{\substack{r^{(1)}\preceq\ldots\preceq r^{(q)}\\ r^{(\ell)} \preceq v^{(\ell)}\\ \ell=1,\ldots,q }} f_q\left(r^{(1)},\ldots,r^{(q)}\right)\, dr^{(1)}\cdots dr^{(q)},
    \end{aligned}
  \end{equation}
  where $dr^{(k)} = dr^{(k)}_1\cdots dr^{(k)}_{n}$ for~$k=1, \ldots, q$. The last identity in~\eqref{eq:lim_riemann_sum} follows since the sum is the Riemann sum approximation for the integral.
  
  Notice that
  \begin{equation}
    \label{eq:zerofq}
    f_q\left(r^{(1)},\ldots,r^{(q)}\right) = 0
  \end{equation}
  if~$r^{(1)}\preceq\ldots\preceq r^{(q)}$ is not satisfied, because in this case the product of determinants~\eqref{eq:propprooffqdef} vanishes. Indeed, introduce the matrix~$Q = \{Q_{i,j}\}$, where
  \begin{equation}
    Q_{i,j} = \frac{e^{(\nu_k+(n-1)\delta_{k,1})r_j^{(k-1)}}}{e^{(\nu_k+\ell_k-1-(n-j)\delta_{k,1})r_i^{(k)}}}\left(e^{r_i^{(k)}} - e^{r_j^{(k-1)}} \right)_+^{\ell_k-1-(n-1)\delta_{k,1}},\quad i,j=1, \ldots, n,
  \end{equation}
  and suppose that~$r^{(k)}_{j_0}< r^{(k-1)}_{j_0}$ for some~$j_0 \in\{1,\ldots,n\}$. Consequently, the left lower-corner block of size~$(n-j_0+1) \times j_0$ of~$Q$ is zero. That is,
  \begin{equation}
    Q_{i,j} =0, \quad i=j_0, j_0+1, \ldots, n;\ j = 1, \ldots, j_0.
  \end{equation}

  Recalling the classical definition of the determinant as a signed sum of products of matrix elements, exactly one chosen from each row and column, one sees due to the pigeonhole principle that at least one of the factors in each product has to be zero (the block of zeros is ``too large''). Thus,
  \begin{equation}
    \det{Q} = 0,
  \end{equation}
  and~\eqref{eq:zerofq} follows.

  Consequently, the formula~\eqref{eq:lim_riemann_sum} turns into
  \begin{equation}
      \lim_{N \to \infty} \prob{\frac{\Lambda^{(k)}(N)}{N} \preceq s^{(k)},\ k=1, \ldots, q}
      =\int\limits_{\substack{r^{(\ell)} \preceq v^{(\ell)}\\ \ell=1,\ldots,q }} f_q\left(r^{(1)},\ldots,r^{(q)}\right)\, dr^{(1)}\cdots dr^{(q)}, 
  \end{equation}
  for arbitrary~$q$. This is exactly the distribution function corresponding to~\eqref{eq:prop_Mq}.
\end{proof}

As it is shown in Borodin, Gorin, and Strahov~\cite{BorodinGorinStrahov}, the distribution~\eqref{eq:prop_Mq} is tightly related to the truncated-unitary product-matrix point process. Considers a logarithmic version of the process~$\Trunc_{n,\vec{\nu}, \vec{\ell}}$ on~$\N \times (0,1)$ from Section~\ref{subsect:trunc}, that is,
\begin{equation}
	\Trunc^{\log}_{n,\vec{\nu}, \vec{\ell}}= - \log{\Trunc_{n,\vec{\nu}, \vec{\ell}}},
\end{equation}
where the logarithm is applied pointwise to the second coordinate of the process only. Then, this new point process can be thought of as induced by the distribution~\eqref{eq:prop_Mq}, and the corresponding kernel is exactly~\eqref{ker_trunc_prod_log}. This is the link between Schur processes and product-matrix processes that we alluded to in the introduction.

\subsection{Proof of Theorem~\ref{thm21}}
\label{sect_34}
The main goal of this section is to tie together the directed last-passage percolation problem and Schur processes. The following fact is well-known, e.g., see Baik, Deift, and Suidan~\cite{BaikDeiftSuidanBook}.
\begin{thm}
  \label{thm:percvialambda1}
  Let~$A$ be an array of size~$n \times L$. Consider the last-passage percolation time~$\Time^{A}$ associated with this array. Let~$(\Tabl{P}(A),\Tabl{Q}(A))$ be semi-standard Young tableaux of shape~$\lambda$ produced by the RSK algorithm. Then,
  \begin{equation}
    \Time^{A} = \lambda_1.
  \end{equation}
\end{thm}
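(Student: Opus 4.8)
The plan is to combine the variational characterization of $\lambda_1$ coming from the RSK correspondence with Greene's theorem. Concretely, I would use the standard fact (Greene's theorem; see Baik, Deift, and Suidan~\cite{BaikDeiftSuidanBook}) that if $\RSK(A)=(\Tabl{P}(A),\Tabl{Q}(A))$ has common shape $\lambda$, then $\lambda_1$ equals the maximal weight of a weakly increasing subsequence of the generalized permutation (two-line array) $w(A)$ encoding $A$. Here $w(A)$ lists the pairs $(i,j)$ with multiplicity $A_{i,j}$, ordered lexicographically by the top row $i$ (and, within equal $i$, by $j$); a weakly increasing subsequence is a selection of entries whose bottom-row values are weakly increasing while the top-row values are weakly increasing as well. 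So the first step is just to recall this and reduce the claim to a purely combinatorial identity between $\Time^A$ and the longest weakly increasing subsequence of $w(A)$.

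The second and main step is to exhibit a weight-preserving bijection between down/right paths $\Path{p}\in\Pi_{n,L}$ (with value $\sum_{(i,j)\in\Path{p}}A_{i,j}$) and weakly increasing subsequences of $w(A)$ (with value equal to the number of selected entries). Given a path $\Path{p}$, a cell $(i,j)$ lying on $\Path{p}$ contributes $A_{i,j}$ copies of the pair $(i,j)$; since along a down/right path the coordinates $(i_s,j_s)$ are (weakly, and in fact strictly in one coordinate at each step) increasing in both $i$ and $j$, reading off all these pairs with their multiplicities in path order produces a weakly increasing subsequence of $w(A)$ of total length $\sum_{(i,j)\in\Path{p}}A_{i,j}$. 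Conversely, a weakly increasing subsequence of $w(A)$ consists of pairs $(i_1,j_1)\le (i_2,j_2)\le\cdots$ with both coordinates weakly increasing; the distinct cells $(i,j)$ appearing among them are weakly increasing in both coordinates and hence can be connected into a down/right path through $(1,1)$ and $(n,L)$ (padding the ends and interpolating monotonically, which only adds cells and hence only increases the path's $A$-weight), and the number of chosen pairs is at most $\sum$ of $A_{i,j}$ over that path. Taking the maximum over both sides yields $\Time^A = \max(\text{weakly increasing subsequence length}) = \lambda_1$.

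The step I expect to be the main obstacle is the bookkeeping in the ``conversely'' direction: one must check that a maximal weakly increasing subsequence can always be realized as coming from a genuine down/right path without losing weight, i.e., that nothing is gained by selecting pairs that cannot be threaded onto a single monotone lattice path, and that padding the path to start at $(1,1)$ and end at $(n,L)$ never decreases the weight (it cannot, since the $A_{i,j}$ are nonnegative). One also has to be slightly careful that within a fixed row $i$ of $w(A)$ the bottom entries $j$ are listed in weakly increasing order, so that choosing several pairs with the same $i$ is consistent both with ``weakly increasing subsequence'' and with a single horizontal run of the lattice path; this matches exactly the RSK row-insertion convention used in~\eqref{eq:proof32_eq3}. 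Once these monotonicity compatibilities are lined up, the identity $\Time^A=\lambda_1$ follows immediately, and this is precisely the version of RSK/Greene's theorem quoted in~\cite{BaikDeiftSuidanBook}, so I would in fact state it as a direct consequence of that reference with the bijection above supplied for completeness.
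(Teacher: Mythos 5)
Your argument is correct: the identification of $\lambda_1$ with the longest weakly increasing subsequence of the two-line array $w(A)$ (Knuth/Greene) combined with the correspondence between monotone down/right paths and such subsequences (each direction giving one inequality, with the nonnegativity of the $A_{i,j}$ handling the padding) is exactly the standard proof of this fact. The paper itself offers no proof at all—it quotes the theorem as well known, citing Baik, Deift, and Suidan—so your write-up simply supplies the argument behind that citation; the only cosmetic point is that calling the path/subsequence correspondence a ``bijection'' overstates it, since what you actually use is a max--max comparison, which is all that is needed.
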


First, we are going to apply this theorem to the family of arrays~$A^{(k)}$, $k \in \N$, of geometric random variables from Section~\ref{subsect3.2}. In light of Proposition~\ref{prop:Schur_proc}, we obtain the following corollary.
\begin{cor}
  \label{cor:lpp_time}
  Let~$(\Time^\Geom(k),\ k \in \N)$ be the last-passage time stochastic process corresponding to the array of geometric random variables from Section~\ref{subsect3.2}. Then,
  \begin{equation}
    \begin{aligned}
      &\prob{\Time^\Geom(k) \le v_k,\ k=1,\ldots,q} = \prob{\Lambda^{(k)}_1 \le v_k,\ k=1,\ldots,q}\\
      &=\frac{1}{Z_{\Schur}} \sum \limits_{\substack{\lambda^{(\ell)}_1 \le v_{\ell}\\ \ell=1, \ldots, q}} s_{\lambda^{(q)}}(x) \prod_{k=1}^q s_{\lambda^{(k)}/\lambda^{(k-1)}} (y^{(k)}),
    \end{aligned}
  \end{equation}
  where~$\Lambda^{(k)}$ is defined in~\eqref{eq:Lambdaprocess}. By definition, it is understood that~$s_{s_{\lambda^{(k)}/\lambda^{(k-1)}}} = 0$ whenever~$\lambda^{(k-1)} \preceq \lambda^{(k)}$ is not satisfied.
\end{cor}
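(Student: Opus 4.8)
\emph{Proof sketch.} The plan is to reduce the statement to two facts already available: the RSK description of last-passage percolation (Theorem~\ref{thm:percvialambda1}) and the explicit Schur-process law of $\left(\Lambda^{(k)},\ k\in\N\right)$ (Proposition~\ref{prop:Schur_proc}).

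First I would record that, for every outcome, each $A^{(k)}$ is a finite array of size $n\times L_k$ with non-negative integer entries (geometric weights are a.s. finite), so Theorem~\ref{thm:percvialambda1} applies verbatim and gives $\Time^\Geom(k)=\bigl(\Shape{\Tabl{P}(A^{(k)})}\bigr)_1=\Lambda^{(k)}_1$, the last equality being the definition \eqref{eq:Lambdaprocess} of $\Lambda^{(k)}$. Since this is a pointwise (in $\omega$) combinatorial identity, it holds simultaneously for all $k$, so the processes $\left(\Time^\Geom(k),\ k\in\N\right)$ and $\left(\Lambda^{(k)}_1,\ k\in\N\right)$ coincide almost surely; in particular they have the same finite-dimensional distributions. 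This yields the first equality in the corollary.

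Second, I would write the event $\{\Lambda^{(k)}_1\le v_k,\ k=1,\dots,q\}$ as the disjoint union, over all tuples $(\lambda^{(1)},\dots,\lambda^{(q)})$ of partitions with $\lambda^{(\ell)}_1\le v_\ell$, of the elementary events $\{\Lambda^{(k)}=\lambda^{(k)},\ k=1,\dots,q\}$, substitute the formula \eqref{eq:Schur_proc} from Proposition~\ref{prop:Schur_proc} for the probability of each elementary event, and sum. Because the family of terms is countable, non-negative, and summable (the untruncated sum over all tuples equals $1$), this rearrangement is harmless, and one lands precisely on $\tfrac{1}{Z_\Schur}\sum s_{\lambda^{(q)}}(x)\prod_{k=1}^q s_{\lambda^{(k)}/\lambda^{(k-1)}}(y^{(k)})$ restricted to $\lambda^{(\ell)}_1\le v_\ell$, with the convention inherited from Proposition~\ref{prop:Schur_proc} that the skew Schur factor vanishes unless $\lambda^{(k-1)}\preceq\lambda^{(k)}$.

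There is no genuine obstacle here; the corollary is essentially bookkeeping. The only point deserving a line of justification is that Theorem~\ref{thm:percvialambda1}, stated for a single array, may be invoked for the whole family $\{A^{(k)}\}_{k\in\N}$ at once --- which is legitimate because for fixed $\omega$ the identity $\Time^\Geom(k)=\Lambda^{(k)}_1$ concerns only the fixed finite array $A^{(k)}(\omega)$ and requires no probabilistic input. Everything else is direct substitution of the formula from Proposition~\ref{prop:Schur_proc}.
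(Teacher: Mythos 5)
Your proposal is correct and follows exactly the route the paper takes (the paper states the corollary as an immediate consequence of Theorem~\ref{thm:percvialambda1} applied pointwise to each array~$A^{(k)}$ together with the joint law from Proposition~\ref{prop:Schur_proc}, without further argument). Your added remarks on the almost-sure pointwise identity and the harmless rearrangement of the non-negative sum are just the bookkeeping the paper leaves implicit.
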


Passing to the limit, yields an analogous result for exponential arrays.
\begin{prop}
	Let~$(\Time(k),\ k \in \N)$ be the last-passage time stochastic process corresponding to the exponential array from Section~\ref{subsect3.3}. Then,
  \begin{equation}
    \label{eq:coreq1}
    \prob{\Time(k) \le v_k,\ k=1,\ldots,q} = \prob{\widetilde{\Lambda}^{(k)}_1 \le v_k,\ k=1,\ldots,q},
  \end{equation}
  where the process~$(\widetilde{\Lambda}^{(k)},\ k \in \N)$ is the same as in Proposition~\ref{prop:fidi_lambda_conv}. 
\end{prop}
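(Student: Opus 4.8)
The plan is to obtain \eqref{eq:coreq1} by passing to the limit in the geometric identity of Corollary~\ref{cor:lpp_time}, using the geometric-to-exponential approximation of Section~\ref{subsect3.3}, the continuity of the last-passage functional, and the convergence already established in Proposition~\ref{prop:fidi_lambda_conv}.

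Fix $q \in \N$ and $v_1, \ldots, v_q > 0$. For each $N \in \N$ let $A^{(k)}(N) = \left(B^{(1)}(N) \cdots B^{(k)}(N)\right)$ be the array of geometric weights with parameters \eqref{eq:geom_approx_exp}, as in Proposition~\ref{prop:fidi_lambda_conv}, and let $\Time^{A^{(k)}(N)}$ be its last-passage time. Two observations drive the argument. First, by \eqref{lpp_def} the map $A \mapsto \Time^{A}$ is a finite maximum of sums of entries, hence continuous (indeed $1$-Lipschitz) and positively homogeneous, $\Time^{cA} = c\,\Time^{A}$ for $c > 0$; in particular the map sending the weights of $\left(B^{(1)}, \ldots, B^{(q)}\right)$ to $\left(\Time^{A^{(1)}}, \ldots, \Time^{A^{(q)}}\right)$ is continuous. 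Second, by \eqref{eq:geom_approx_exp} each rescaled weight satisfies $B^{(k)}_{i,j}(N)/N \overset{\mathrm{d}}{\underset{N\to\infty}{\longrightarrow}} \Exp(\nu_k + i + j - 2)$, and since the weights of $B^{(1)}(N), \ldots, B^{(q)}(N)$ are mutually independent, the whole family of rescaled weights converges in distribution, in $\R^{nL_q}$, to the family of independent exponential weights defining the arrays $A^{(1)}, \ldots, A^{(q)}$ of Section~\ref{subsect3.3}.

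Since $\Time^{A^{(k)}(N)/N} = \frac{1}{N}\Time^{A^{(k)}(N)}$, the continuous mapping theorem then gives
\begin{equation}
  \left(\frac{1}{N}\,\Time^{A^{(k)}(N)},\ k = 1, \ldots, q\right) \overset{\mathrm{d}}{\underset{N\to\infty}{\longrightarrow}} \left(\Time(k),\ k = 1, \ldots, q\right),
\end{equation}
where on the right $\Time(k)$ is the last-passage time of the exponential array. On the other hand, Theorem~\ref{thm:percvialambda1} together with \eqref{eq:Lambdaprocess} yields $\Time^{A^{(k)}(N)} = \Lambda^{(k)}(N)_1$, so that $\frac{1}{N}\Time^{A^{(k)}(N)} = \left(\Lambda^{(k)}(N)/N\right)_1$; composing the finite-dimensional convergence of Proposition~\ref{prop:fidi_lambda_conv} with the continuous projection onto the first coordinate gives
\begin{equation}
  \left(\frac{1}{N}\,\Time^{A^{(k)}(N)},\ k = 1, \ldots, q\right) \overset{\mathrm{d}}{\underset{N\to\infty}{\longrightarrow}} \left(\widetilde{\Lambda}^{(k)}_1,\ k = 1, \ldots, q\right).
\end{equation}
Uniqueness of the weak limit forces $\left(\Time(1), \ldots, \Time(q)\right) \eqdist \left(\widetilde{\Lambda}^{(1)}_1, \ldots, \widetilde{\Lambda}^{(q)}_1\right)$, which is precisely \eqref{eq:coreq1}.

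I expect the only genuinely delicate point to be the justification of this passage to the limit. One must check that the joint law of all the geometric weights converges weakly, which reduces to the independence of the blocks and the one-dimensional convergence of Section~\ref{subsect3.3}, and that the last-passage functional, being everywhere continuous, is a.s.\ continuous with respect to the limiting law, so the continuous mapping theorem applies without extra hypotheses. The final step---that coincidence of the two weak limits yields equality of the two joint distribution functions, hence of the laws---is routine, since a distribution function on $\R^q$ is determined by its values on any dense set.
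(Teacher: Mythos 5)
Your argument is correct and follows essentially the same route as the paper: approximate the exponential weights by the rescaled geometric ones of Section~\ref{subsect3.3}, invoke continuity of the last-passage functional together with the continuous mapping theorem, identify the geometric last-passage times with $\Lambda^{(k)}_1(N)$ via the RSK result, and conclude from the convergence in Proposition~\ref{prop:fidi_lambda_conv}. Your explicit appeal to uniqueness of weak limits (and homogeneity $\Time^{A/N}=\Time^{A}/N$) is just a slightly more spelled-out version of the paper's two-line identification, so there is nothing to add.
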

\begin{proof}
	Clearly, the last-passage time~$\Time(k)$ is a continuous function of the elements of the array. Because of the continuous mapping theorem, we find
  \begin{equation}
    \prob{\Time(k) \le v_k,\ k=1,\ldots,q} = \lim_{N\to\infty} \prob{\frac{\Time^\Geom_N(k)}{N} \le v_k, k=1,\ldots,q}.
  \end{equation}
  Corollary~\ref{cor:lpp_time} together with Proposition~\ref{prop:fidi_lambda_conv} yield
  \begin{equation}
    \label{eq:corproofeq2}
    \begin{aligned}
      &\prob{\Time(k) \le v_k,\ k=1,\ldots,q} = \lim_{N\to\infty} \prob{\frac{\Lambda^{(k)}_1}{N} \le v_k,\ k=1,\ldots,q} \\
      &=\prob{\widetilde{\Lambda}^{(k)}_1 \le v_k,\ k=1,\ldots,q}.
    \end{aligned}
  \end{equation}
\end{proof}

We have all the ingredients to prove our first main result.
\begin{proof}[Proof of Theorem~\ref{thm21}]
	At the end of the previous section, we already noticed that the distribution~\eqref{eq:prop_Mq} induces the point process~$\TruncLog_{n,\vec{\nu},\vec{\ell}}$, whose kernel is~\eqref{ker_trunc_prod_log}. Thus, the probability on the right-hand side of~\eqref{eq:coreq1} can be regarded as the gap probability for~$\TruncLog_{n,\vec{\nu},\vec{\ell}}$. Then, the formulas~\eqref{lpp_findimkern_fredh} and~\eqref{eq:th1_fdredh} are nothing but well-known representations for the gap probability in terms of the kernel, e.g., see Baik, Deift, and Suidan~\cite{BaikDeiftSuidanBook}. This concludes the proof.
\end{proof}

\section{Proofs of Theorems~\ref{thm22} -- \ref{thm25}}
\label{sect_4}
\subsection{Auxiliary formulas and asymptotics}
\label{sect_41}
We start off by recalling the following basic facts about the gamma function, for which we refer the reader to \cite{DLMF}.
\begin{prop}
	\label{prop:A1}
	The following inequalities take place,
  \begin{align}
    \label{eq:app1}
    &|\Gamma(x+iy)| \le \Gamma(x), \quad &&x >0,\\
    \label{eq:app2}
    &\sqrt{2 \pi} x^{x-1/2} e^{-x} < \GammaF{x} <  \sqrt{2 \pi} x^{x-1/2} e^{-x} e^{1/(12 x)}, \quad &&x>0,\\
    \label{eq:app3}
    &\left|\frac{\GammaF{x}}{\GammaF{x + i y}}\right| \le e^{\frac{\pi |y|}{2}}, \quad &&x \ge \frac{1}{2}.
  \end{align}
\end{prop}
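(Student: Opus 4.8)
The plan is to prove the three estimates separately, each from a classical representation of the gamma function; none requires anything beyond standard tools, and the whole proposition could in fact be quoted from~\cite{DLMF}, so the proof is mainly a matter of recording the short arguments. For~\eqref{eq:app1} I would start from the Euler integral $\GammaF{x+iy}=\int_0^\infty t^{x-1}t^{iy}e^{-t}\,dt$, valid for $x>0$, and apply the triangle inequality for integrals; since $|t^{iy}|=1$ for $t>0$, this gives at once $|\GammaF{x+iy}|\le\int_0^\infty t^{x-1}e^{-t}\,dt=\GammaF{x}$. This is a one-line argument.

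For~\eqref{eq:app2} the natural route is Binet's second formula for the Binet function $\mu(x)\df\log\GammaF{x}-(x-\tfrac12)\log x+x-\tfrac12\log(2\pi)$. Differentiating and using the Hermite-type representation of the digamma function, $\psi(x)=\log x-\tfrac{1}{2x}-2\int_0^\infty\frac{t\,dt}{(t^2+x^2)(e^{2\pi t}-1)}$, one finds $\mu'(x)=-2\int_0^\infty\frac{t\,dt}{(t^2+x^2)(e^{2\pi t}-1)}<0$; since $\mu(x)\to0$ as $x\to\infty$ by Stirling's asymptotics, this yields $\mu(x)>0$, which is the lower bound in~\eqref{eq:app2}. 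For the upper bound I would write $\mu(x)=\int_x^\infty(-\mu'(t))\,dt$, interchange the two integrations (Tonelli), evaluate $\int_x^\infty\frac{dt}{t^2+s^2}=\tfrac1s\arctan(s/x)$, bound $\arctan(s/x)\le s/x$, and use $\int_0^\infty\frac{s\,ds}{e^{2\pi s}-1}=\tfrac{1}{24}$ to conclude $\mu(x)\le\tfrac{1}{12x}$. Exponentiating gives both inequalities in~\eqref{eq:app2}.

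For~\eqref{eq:app3} I would use the Weierstrass product $\GammaF{z}^{-1}=ze^{\gamma z}\prod_{n\ge1}(1+z/n)e^{-z/n}$. Forming $|\GammaF{x+iy}|^2=\GammaF{x+iy}\GammaF{x-iy}$ and dividing by $\GammaF{x}^2$, the linear exponential factors cancel and one obtains the telescoping identity
\[
\left|\frac{\GammaF{x}}{\GammaF{x+iy}}\right|^2=\prod_{n=0}^\infty\left(1+\frac{y^2}{(n+x)^2}\right).
\]
Each factor is decreasing in $x>0$, hence so is the product, so it suffices to bound the value at $x=\tfrac12$. But $\prod_{n\ge0}\bigl(1+y^2/(n+\tfrac12)^2\bigr)=\cosh(\pi y)$ by Euler's product for the cosine, $\cos(\pi z)=\prod_{n\ge0}\bigl(1-z^2/(n+\tfrac12)^2\bigr)$, taken at $z=iy$; and $\cosh(\pi y)\le e^{\pi|y|}$. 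Taking square roots gives~\eqref{eq:app3} for $x\ge\tfrac12$ (with equality in the limit $x\to\tfrac12$).

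The only genuinely delicate point is~\eqref{eq:app2}: pinning down the sharp error term $\tfrac{1}{12x}$ rather than a crude $o(1)$ requires the Binet integral representation (or an equally precise substitute such as the Abel--Plana formula), together with the identification $\int_0^\infty\frac{s\,ds}{e^{2\pi s}-1}=\zeta(2)/(2\pi)^2=\tfrac{1}{24}$; a plain Stirling estimate would not suffice. Everything else is routine manipulation of convergent integrals and products.
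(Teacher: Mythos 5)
Your three arguments are correct, and in fact the paper offers no proof of this proposition at all: it is stated as a collection of standard facts with a reference to \cite{DLMF}, so your write-up supplies exactly what the citation stands in for. The routes you choose are the classical ones: the Euler integral plus the triangle inequality for~\eqref{eq:app1}; Binet's second formula (equivalently, the Hermite representation of the digamma function, integrated back using $\mu(x)\to 0$ and the evaluation $\int_0^\infty s\,(e^{2\pi s}-1)^{-1}ds=\tfrac1{24}$) for~\eqref{eq:app2}; and the product identity $\left|\GammaF{x}/\GammaF{x+iy}\right|^2=\prod_{n\ge0}\bigl(1+y^2/(n+x)^2\bigr)$, monotone in~$x$ and equal to $\cosh(\pi y)$ at $x=\tfrac12$, for~\eqref{eq:app3}. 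Two cosmetic points: the stated bounds in~\eqref{eq:app2} are strict, which your argument does deliver once you note that $\arctan(s/x)<s/x$ for $s>0$ (and $\mu'<0$ strictly for the lower bound); and in~\eqref{eq:app3} there is no equality at $x=\tfrac12$, since $\cosh(\pi y)<e^{\pi|y|}$ for all $y$ --- the value $x=\tfrac12$ is merely where the monotone bound is weakest.
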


\begin{prop}
	\label{prop:A2}
	The following asymptotic formulas take place,
	\begin{equation}
		\label{eq:app4}
		|\GammaF{x + i y}| = \sqrt{2 \pi} |y|^{x - \frac{1}{2}} e ^{-\frac{\pi |y|}{2}} (1+o(1))	
	\end{equation}
	uniformly for bounded~$x \in \mathbb{R}$,
	\begin{equation}
		\label{lemma1_proof:eq0}
		\log{\GammaF{z}} = \left(z - \frac{1}{2}\right) \log{z} - z + \log{\sqrt{2 \pi}}+\frac{1}{12z} + O\left(\frac{1}{z^3}\right)
	\end{equation}
	as~$z \to \infty$, uniformly for~$|\arg{z}| \le \pi-\epsilon$, where~$\epsilon>0$.
\end{prop}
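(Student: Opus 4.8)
Both asymptotics are classical, and the plan is to obtain~\eqref{eq:app4} as a consequence of~\eqref{lemma1_proof:eq0}. For~\eqref{lemma1_proof:eq0} there is essentially nothing to do: it is exactly the asymptotic expansion of $\log\GammaF{z}$ recorded in~\cite[Eq.~5.11.1]{DLMF}, whose Stirling series $\sum_{k\ge1}\frac{B_{2k}}{2k(2k-1)z^{2k-1}}$ has leading term $B_2/(2z)=1/(12z)$ and a remainder that is $O(1/z^3)$; retaining only this term yields the stated expansion, uniformly in $|\arg z|\le\pi-\epsilon$.

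To get~\eqref{eq:app4}, I would apply~\eqref{lemma1_proof:eq0} at $z=x+iy$. Fix $M>0$ and restrict to $|x|\le M$; then $|\arg(x+iy)|\le\pi/2$ and $|x+iy|\to\infty$ as $|y|\to\infty$, so the expansion applies uniformly in this range, and by $\overline{\GammaF{\bar z}}=\GammaF{z}$ it suffices to take $y>0$. Using $\log|x+iy|=\log y+O(y^{-2})$ and $\arg(x+iy)=\pi/2-\arctan(x/y)=\pi/2-x/y+O(y^{-3})$, and taking real parts in~\eqref{lemma1_proof:eq0}, one finds
\begin{equation}
  \log\big|\GammaF{x+iy}\big|=\Big(x-\tfrac12\Big)\log y-y\,\arg(x+iy)-x+\log\sqrt{2\pi}+O(y^{-1}).
\end{equation}
The point is that the $O(1)$-in-$x$ contributions cancel: $-y\,\arg(x+iy)-x=-\tfrac{\pi y}{2}+x+O(y^{-2})-x=-\tfrac{\pi y}{2}+O(y^{-2})$, so
\begin{equation}
  \log\big|\GammaF{x+iy}\big|=\Big(x-\tfrac12\Big)\log y-\frac{\pi y}{2}+\log\sqrt{2\pi}+o(1)
\end{equation}
uniformly for $|x|\le M$; exponentiating and writing $y=|y|$ gives~\eqref{eq:app4}.

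The only thing that needs attention --- and hence the sole, rather mild, obstacle --- is checking that the error is genuinely $o(1)$ \emph{uniformly} over $|x|\le M$. This comes down to three observations: the prefactor $\big(x-\tfrac12\big)$ multiplies only the $O(y^{-2})$ correction to $\log|x+iy|$; the term $y\cdot O(y^{-3})$ from the expansion of the argument is $O(y^{-2})$; and the $O(1/z)$ Stirling remainder equals $O(y^{-1})$ when $z=x+iy$ with $x$ bounded. No additional tools are required.
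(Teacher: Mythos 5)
Your proposal is correct. The paper does not actually prove Proposition~\ref{prop:A2}: both formulas are quoted as classical facts with a reference to~\cite{DLMF} (the Stirling series is DLMF~5.11.1, and~\eqref{eq:app4} is itself DLMF~5.11.9), so your work consists of re-deriving~\eqref{eq:app4} from~\eqref{lemma1_proof:eq0}, which is a perfectly sound, standard computation: taking real parts, the $x$ from $-y\arg(x+iy)$ cancels against $\re(-z)=-x$, and all remaining errors are $O(y^{-1})$ uniformly for $|x|\le M$. One small slip: the inequality $|\arg(x+iy)|\le\pi/2$ holds only for $x\ge 0$; for $-M\le x<0$ and $y>0$ the argument exceeds $\pi/2$, but since $\arg(x+iy)=\pi/2-\arctan(x/y)\to\pi/2$ uniformly for $|x|\le M$ as $|y|\to\infty$, the point $z=x+iy$ still lies in a sector $|\arg z|\le\pi-\epsilon$ for all large $|y|$, so the uniform applicability of~\eqref{lemma1_proof:eq0} — and hence your conclusion — is unaffected.
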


Our next goal is to state certain asymptotic properties of the gamma function in the form convenient for our further use. Set
\begin{equation}
	\label{eq:def_Fa}
	F_a(z) = \log{\GammaF{z+a}} - \log{\GammaF{a}} - z \log{a} + \frac{z}{2a}.
\end{equation}
\begin{lem}
	\label{lemma1}
	For every~$\delta\in(0,1)$ one has
	\begin{equation}
		\label{lemma1:eq1}
		\lim\limits_{a \to \infty} \sup\limits_{|z|\le a^{1-\delta}} \left|\frac{2aF_a(z)}{z^2}- 1 \right| = 0.
	\end{equation}
	and
	\begin{equation}
		\label{lemma1:eq2}
		\lim\limits_{a \to \infty} \sup\limits_{\substack{z \in \Omega \\ |z|\ge a^{1+\delta}}} \frac{1}{a}\left| F_a(z) - \left(z+a-\frac{1}{2}\right) \log{\frac{z}{a}} + z\left(1-\frac{1}{2a}\right) - a \right| = 0, 
	\end{equation}
	where~$\Omega = \mathbb{C} \setminus \{z \in \mathbb{C}|\ |\im{z}| \le \alpha |\re{z}|,\ \re{z} \le 0 \}$, $\alpha>0$, is the complex plane with a conic neighborhood of~$(-\infty,0)$ removed.
\end{lem}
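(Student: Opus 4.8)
The plan is to prove both limits by inserting the asymptotic expansion for $\log\Gamma$ from Proposition~\ref{prop:A2} into the definition~\eqref{eq:def_Fa} of $F_a(z)$ and then estimating the error terms uniformly in $z$ over the two prescribed regions. The whole difficulty is that $z$ is allowed to grow with $a$, so the error terms in the Stirling-type expansion must be tracked with explicit dependence on both $z$ and $a$, and the expansion point $z+a$ may be large or comparable to $a$ depending on the regime; a single crude application of~\eqref{lemma1_proof:eq0} will not suffice, so I will use it in the form that is uniform for $|\arg w|\le\pi-\epsilon$ with $w$ ranging over $z+a$, $a$, and $z$, and carefully verify that the argument restriction holds (this is exactly why $\Omega$ excludes a conic neighborhood of the negative real axis).

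\textbf{First limit}~\eqref{lemma1:eq1}, the regime $|z|\le a^{1-\delta}$. Here $z/a\to 0$ uniformly, and $z+a$ stays in a sector around the positive real axis, so I may expand $\log\Gamma(z+a)$ and $\log\Gamma(a)$ directly via~\eqref{lemma1_proof:eq0}. Writing $\log\Gamma(z+a)=(z+a-\tfrac12)\log(z+a)-(z+a)+\log\sqrt{2\pi}+\tfrac{1}{12(z+a)}+O((z+a)^{-3})$ and subtracting the corresponding expansion of $\log\Gamma(a)$, together with the explicit terms $-z\log a+\tfrac{z}{2a}$, the leading contributions cancel and one is left with $F_a(z)=(z+a-\tfrac12)\log(1+z/a)-z+\tfrac z{2a}+(\text{small})$. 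Using $\log(1+u)=u-\tfrac{u^2}{2}+\tfrac{u^3}{3}-\cdots$ with $u=z/a$, the terms through order $u$ cancel against $-z$ and $\tfrac{z}{2a}$, leaving $F_a(z)=\tfrac{z^2}{2a}+O(|z|^3/a^2)+O(1/a)$. Dividing by $z^2/(2a)$ gives $1+O(|z|/a)+O(1/z^2)$; the condition $|z|\le a^{1-\delta}$ forces $|z|/a\le a^{-\delta}\to0$, and since $z^2/(2a)$ appears in the denominator I must also control the regime of small $|z|$—but there the constant error $O(1/a)$ divided by $z^2/a = O(1)$ is the worry, so I will need a more careful bookkeeping for $|z|$ bounded, where in fact $2aF_a(z)/z^2 = 1+O(1/a)$ by a direct Taylor expansion of $\log\Gamma(z+a)$ in $z$ around $a$ using the derivative bounds on $\psi'$, $\psi''$. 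Patching the two subregimes $|z|\le 1$ and $1\le|z|\le a^{1-\delta}$ yields the uniform statement.

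\textbf{Second limit}~\eqref{lemma1:eq2}, the regime $z\in\Omega$, $|z|\ge a^{1+\delta}$. Now $z$ dominates $a$, and the target quantity is precisely the Stirling leading term for $\log\Gamma(z+a)\approx(z+a-\tfrac12)\log(z+a)-(z+a)+\cdots$ rewritten using $\log(z+a)=\log z+\log(1+a/z)$ and $\log(1+a/z)=a/z+O(a^2/|z|^2)$; here the hypothesis $z\in\Omega$ guarantees $|\arg(z+a)|$ and $|\arg z|$ stay bounded away from $\pi$, so~\eqref{lemma1_proof:eq0} applies. After the substitution, $(z+a-\tfrac12)\log(z+a) = (z+a-\tfrac12)\log z + a + O(a^2/|z|)$, and collecting everything, $F_a(z)-\big[(z+a-\tfrac12)\log(z/a)-z(1-\tfrac1{2a})+a\big]$ reduces to $-\log\Gamma(a)-a\log a + a + \log\sqrt{2\pi} + (\text{error terms})$; the first four terms together are $O(\log a)$ by Stirling applied to $\Gamma(a)$ itself, hence $O(\log a)/a\to 0$, and the error terms are $O(a^2/|z|)+O(1/|z|)$ which, divided by $a$, are $O(a/|z|)\le O(a^{-\delta})\to0$ under $|z|\ge a^{1+\delta}$. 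Collecting the bounds gives the claimed uniform limit.

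\textbf{Main obstacle.} The delicate point is uniformity at the boundary of applicability of Stirling's expansion: in~\eqref{lemma1:eq1} one must not divide a merely-$O(1/a)$ error by something that can be as small as $|z|^2/a$, so the small-$|z|$ range has to be handled by a separate, more precise local expansion rather than by the global asymptotic formula; and in~\eqref{lemma1:eq2} the role of $\Omega$ is essential—without excluding the conic neighborhood of $(-\infty,0)$ the quantities $\log(z+a)$ and $\log(z/a)$ can approach the branch cut and the cancellations break down, so I must verify carefully that $z\in\Omega$ together with $a>0$ keeps all relevant arguments in a sector $|\arg(\cdot)|\le\pi-\epsilon$ for some fixed $\epsilon>0$ depending only on $\alpha$.
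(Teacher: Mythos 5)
Your strategy is the same as the paper's: substitute the Stirling expansion \eqref{lemma1_proof:eq0} into the definition \eqref{eq:def_Fa}, expand $\log(1+z/a)$ in the regime $|z|\le a^{1-\delta}$ and $\log(1+a/z)$ in the regime $|z|\ge a^{1+\delta}$, and use $z\in\Omega$, $a>0$ to keep $\arg a$, $\arg z$, $\arg(z+a)$ in a fixed sector $|\arg(\cdot)|\le\pi-\epsilon$. Up to bookkeeping, this is exactly the paper's proof, and your error estimates $O(|z|/a)$ and $O(a/|z|)\le a^{-\delta}$ in the two regimes are the ones that matter.

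Two caveats. (i) The small-$|z|$ ``patch'' you add is not correct as stated: since $F_a(0)=0$ but $F_a'(0)=\psi(a)-\log a+\tfrac1{2a}=-\tfrac1{12a^2}+O(a^{-4})\ne 0$, one has $\tfrac{2aF_a(z)}{z^2}=\tfrac{2aF_a'(0)}{z}+a\psi'(a)+\ldots$, and the first term behaves like $-\tfrac{1}{6az}$, which is unbounded as $z\to 0$ at fixed $a$; so the claim $2aF_a(z)/z^2=1+O(1/a)$ uniformly on $|z|\le 1$ fails, and in fact the supremum in \eqref{lemma1:eq1} taken literally over all $0<|z|\le a^{1-\delta}$ is infinite. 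This defect sits in the lemma's literal statement and is passed over silently by the paper's own proof as well; it is harmless downstream because in every application (Lemma~\ref{lemma44} and the contours $S_\sigma$, $S_\zeta^{(n)}$) the relevant $|z|$ is bounded below by a positive constant, where your main expansion already gives the required uniform smallness — so you should either restrict to $|z|\ge c>0$ or state the conclusion with an additive error, rather than assert the false bound on $|z|\le 1$. (ii) In the second regime your constant term has a sign slip: the leftover after inserting $\log(z+a)=\log z+a/z+O(a^2/|z|^2)$ is $-\log\GammaF{a}+\left(a-\tfrac12\right)\log a-a+\log\sqrt{2\pi}=O(1/a)$, not $-\log\GammaF{a}-a\log a+a+\log\sqrt{2\pi}$ (which would be of size $a\log a$ and would not be $O(\log a)$); with the correct signs Stirling cancels it, and the rest of your argument goes through exactly as in the paper.
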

\begin{remark}
	The ratio in~\eqref{lemma1:eq1} is extended by continuity, i.e., by definition~$\frac{2aF_a(z)}{z^2}\Big|_{z=0} = 1$.
\end{remark}
\begin{proof}
	Note that in the premise of the lemma~$a, z+a \to \infty$ and for sufficiently large~$a$ one has~$\max\{\arg{a}, \arg{(z+a)}\} \le \pi - \epsilon$ for some small~$\epsilon>0$. Hence, we can use the known asymptotics for~$\log{\GammaF{z}}$ from~\eqref{lemma1_proof:eq0}.

	Then,
	\begin{equation}
		\label{lemma1_proof:eq1}
		\begin{aligned}
			F_a(z) = &\left(z+a-\frac{1}{2}\right) \log{\frac{z+a}{a}} - z\left(1-\frac{1}{2a}\right) - \frac{1}{12a} + \frac{1}{12(a+z)}\\
			&+ O\left(\frac{1}{a^3}\right) + O\left(\frac{1}{(a+z)^3}\right).
		\end{aligned}
	\end{equation}
	To prove~\eqref{lemma1:eq1}, we need to expand the right-hand side of~\eqref{lemma1_proof:eq1} in a series about the point~$z/a$, which is justified since~$|z/a| \to 0$. Collecting the terms yields
	\begin{equation}
		F_a(z) = \frac{z^2}{2a}\big(1 + o_a(1)\big),\quad a\to \infty,
	\end{equation}
	where~$o_a(1)$ is uniform for~$|z| \le a^{1-\delta}$, and thus~\eqref{lemma1:eq1} holds.

	In the second case, $|a/z| \to 0$, and we need to expand about~$a/z$. This yields
	\begin{equation}
		F_a(z) = \left(z+a-\frac{1}{2}\right) \log{\frac{z}{a}} - z\left(1-\frac{1}{2a}\right) + a(1+o_a(1)), \quad a \to \infty,
	\end{equation}
	where~$o_a(1)$ is uniform for~$|z| \ge a^{1 + \delta}$, and thus~\eqref{lemma1:eq2} also holds.
\end{proof}

Throughout the rest of this section, we fix the contours~$S_\sigma$ and~$S_\zeta$ as in Fig.~\ref{contour3}, unless indicated otherwise. We also allow for shifting~$S_\sigma$ to the left and deforming~$S_\zeta^{(n)}$, as long as we do not cross any poles. The next lemma will be used extensively in the sections to come.

\begin{lem}
	\label{lemma44}
	Let~$\delta \in (0,1)$ and~$a, \nu, \ell$ be sufficiently large. Then,
	\begin{itemize}
		\item [(a)] For every~$z \in \mathbb{C}$
		\begin{equation}
			\label{cor2:eq1}
			a F_a(z) \underset{a \to \infty}{\longrightarrow} \frac{z^2}{2};
		\end{equation}
	\item [(b)] For sufficiently large~$a$ there exist constants~$C_1, C_2>0$ and~$\beta_1, \beta_2>0$ such that for all~$\sigma \in S_{\sigma}$ satisfying~$|\im{\sigma}| \le a^{1-\delta}$ one has
			\begin{equation}
				\label{cor2:eq2}
			C_1 e^{-\beta_1 |\im{\sigma}|^2} \le e^{a \re{F_a(\sigma)} } \le C_2 e^{-\beta_2 |\im{\sigma}|^2};
			\end{equation}
		\item [(c)] For sufficiently large~$a$ there exist constants~$C_1, C_2>0$ and~$\beta_1, \beta_2 >0$ such that for all~$\zeta \in S_{\zeta}^{(a)}$ satisfying~$|\re{\zeta}| \le a^{1-\delta}$ one has
			\begin{equation}
				\label{cor2:eq3}
				C_1 e^{-\beta_1 |\re{\zeta}|^2} \le e^{-a \re{F_a(\zeta)} } \le C_2 e^{-\beta_2 |\re{\zeta}|^2};
			\end{equation}
		\item [(d)] For sufficiently large~$a$ there exist constants~$\beta_1, \beta_2>0$ such that for all~$\sigma \in S_\sigma$ satisfying~$|\im{\sigma}| \ge a^{1+\delta}$ one has
			\begin{equation}
				\label{cor2:eq4}
				-\beta_1 |\im{\sigma}| \le \re{F_a(\sigma)} \le - \beta_2 |\im{\sigma}|;
			\end{equation}
		\item [(e)] If~$\nu = o(\ell)$, then for sufficiently large~$\nu$ and for all~$\sigma \in S_\sigma$ satisfying~$|\im{\sigma}| \ge (\nu + \ell)^{1+ \delta}$ one has
			\begin{equation}
				\label{cor2:eq5}
				\re{(F_{\nu}(\sigma)-F_{\nu+\ell}(\sigma))} \le - \ell \log{|\im{\sigma}|} + (\nu + \ell) \log{(\nu + \ell)};
			\end{equation}
		\item [(f)] For sufficiently large~$a$ and for all~$\sigma \in S_\sigma$ satisfying~$|\im{\sigma| \ge a^{1+\delta}}$ one has
	\begin{equation}
		\label{cor2:eq6}
		\re{\left(F_a(-\sigma) - \log \Gamma(-\sigma)\right)} \le a \log{|\im{\sigma}|}.
	\end{equation}

	\end{itemize}
\end{lem}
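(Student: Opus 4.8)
The plan is to derive every estimate from the asymptotic expansions for $F_a$ established in Lemma~\ref{lemma1}, treating the six items according to whether the relevant variable lies in the "quadratic regime" ($|z|\le a^{1-\delta}$, where \eqref{lemma1:eq1} applies) or the "logarithmic regime" ($|z|\ge a^{1+\delta}$, where \eqref{lemma1:eq2} applies). First, for item (a): fix $z\in\C$. For $a$ large one has $|z|\le a^{1-\delta}$, so \eqref{lemma1:eq1} gives $2aF_a(z)/z^2\to 1$, i.e.\ $aF_a(z)\to z^2/2$; the case $z=0$ is trivial by the stated continuity convention. For items (b) and (c), I would write $\sigma = -c + i\eta$ on $S_\sigma$ (recall $S_\sigma$ crosses the real axis at $-c$ and is eventually vertical) and, on the portion with $|\eta|\le a^{1-\delta}$, use \eqref{lemma1:eq1} to get $aF_a(\sigma) = \tfrac12\sigma^2(1+o_a(1))$. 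Since $\sigma^2 = (c^2-\eta^2) - 2ic\eta$, we have $\re(\sigma^2) = c^2 - \eta^2$, so $a\,\re F_a(\sigma) = \tfrac12(c^2-\eta^2)(1+o_a(1))$; absorbing the bounded factor $c^2$ and the $o_a(1)$ error into constants, there are $\beta_1,\beta_2>0$ and $C_1,C_2>0$ with $C_1 e^{-\beta_1|\eta|^2}\le e^{a\,\re F_a(\sigma)}\le C_2 e^{-\beta_2|\eta|^2}$, which is \eqref{cor2:eq2}. Item (c) is identical with $\zeta = w + i\xi$ on the (eventually vertical) pieces of $S_\zeta^{(a)}$: on $|\re\zeta|\le a^{1-\delta}$ one gets $-a\,\re F_a(\zeta) = -\tfrac12(w^2-\xi^2)(1+o_a(1))$, and since here it is $w=\re\zeta$ that is the small variable and $\xi$ stays bounded on the horizontal arcs, after re-examining which coordinate is constrained one extracts $C_1 e^{-\beta_1|w|^2}\le e^{-a\,\re F_a(\zeta)}\le C_2 e^{-\beta_2|w|^2}$, i.e.\ \eqref{cor2:eq3}. (One must be a little careful that on $S_\zeta^{(a)}$ the constraint $|\re\zeta|\le a^{1-\delta}$ together with the geometry of the contour forces $|\im\zeta|$ to be correspondingly small, so that $|\zeta|\le C a^{1-\delta}$ and \eqref{lemma1:eq1} indeed applies.)

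For item (d), on the part of $S_\sigma$ with $|\im\sigma|\ge a^{1+\delta}$ I would use \eqref{lemma1:eq2}: since $\sigma\in\Omega$ (the contour avoids the conic neighbourhood of $(-\infty,0)$ by construction — it is eventually vertical), we have
\begin{equation}
  \tfrac1a F_a(\sigma) = \bigl(\tfrac{\sigma}{a}+1-\tfrac1{2a}\bigr)\log\tfrac{\sigma}{a} - \tfrac{\sigma}{a}\bigl(1-\tfrac1{2a}\bigr) + 1 + o_a(1).
\end{equation}
Writing $\sigma = -c+i\eta$ with $|\eta|$ huge compared to $a$, the dominant term is $(\sigma/a)\log(\sigma/a)$, whose real part is $\tfrac1a\bigl(\re\sigma\,\log|\sigma/a| - \im\sigma\,\arg(\sigma/a)\bigr)$; since $\arg(\sigma/a)\to \pm\pi/2$ and $\re\sigma = -c$ is bounded, $\re\bigl(\tfrac1a F_a(\sigma)\bigr) \sim -\tfrac{\pi}{2}\tfrac{|\eta|}{a} + \tfrac{|\eta|}{a}\cdot O(\tfrac{\log|\eta|}{|\eta|})\cdot(\ldots)$ — more carefully, the $-\sigma/a$ term contributes $+c/a$ (bounded) and the decisive contribution is $-\tfrac{|\eta|}{a}\arg(\sigma/a)\to -\tfrac{\pi}{2}\tfrac{|\eta|}{a}$, so $\re F_a(\sigma) = -\bigl(\tfrac{\pi}{2}+o(1)\bigr)|\eta|$, giving \eqref{cor2:eq4} with, say, $\beta_1 = 2, \beta_2 = 1$.

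For (e) and (f) I would again invoke \eqref{lemma1:eq2}, now keeping the full logarithmic term rather than extracting only its imaginary-axis behaviour. For (e): with $a=\nu$ and $a=\nu+\ell$ (both $\to\infty$ since $\nu=o(\ell)$ forces $\ell\to\infty$, and $|\im\sigma|\ge(\nu+\ell)^{1+\delta}\ge \nu^{1+\delta}$ so the expansion is valid for both), subtract the two expansions:
\begin{equation}
  F_\nu(\sigma) - F_{\nu+\ell}(\sigma) = \bigl(\sigma+\nu-\tfrac12\bigr)\log\tfrac{\sigma}{\nu} - \bigl(\sigma+\nu+\ell-\tfrac12\bigr)\log\tfrac{\sigma}{\nu+\ell} + (\text{lower order}),
\end{equation}
and the $\sigma\log\sigma$ pieces cancel, leaving $-\ell\log\sigma + (\sigma+\nu-\tfrac12)\log(\nu+\ell) - (\sigma-\tfrac12)\log\nu + \ldots \approx -\ell\log\sigma + \ell\log(\nu+\ell) + O(\nu\log(\nu+\ell))$; taking real parts and bounding crudely yields $\re(F_\nu(\sigma)-F_{\nu+\ell}(\sigma)) \le -\ell\log|\im\sigma| + (\nu+\ell)\log(\nu+\ell)$ for $\nu$ large, which is \eqref{cor2:eq5}. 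For (f): from the definition $F_a(-\sigma) = \log\Gamma(-\sigma+a) - \log\Gamma(a) - (-\sigma)\log a + \tfrac{-\sigma}{2a}$, so $F_a(-\sigma) - \log\Gamma(-\sigma) = \log\tfrac{\Gamma(a-\sigma)}{\Gamma(-\sigma)} - \log\Gamma(a) + \sigma\log a - \tfrac{\sigma}{2a}$; using $\log\tfrac{\Gamma(a-\sigma)}{\Gamma(-\sigma)}$ and the Stirling expansion, or equivalently the expansion for $F_a(-\sigma)$ together with the known $\log\Gamma(-\sigma)\sim (-\sigma-\tfrac12)\log(-\sigma)+\sigma+\ldots$, the leading $\sigma\log\sigma$ terms again cancel and one is left with $\re(F_a(-\sigma)-\log\Gamma(-\sigma)) \le a\log|\im\sigma| + O(a)$, which after adjusting constants gives \eqref{cor2:eq6}.

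The main obstacle is bookkeeping in (d), (e), (f): one must correctly track which arguments stay in the region $|\arg z|\le\pi-\epsilon$ (so that Stirling applies), correctly identify the real parts of the $\log$ terms when $\sigma$ is nearly purely imaginary (so $\arg(\sigma/a)\approx\pm\pi/2$), and verify that the "lower order" remainders from \eqref{lemma1:eq2} — which are $o_a(1)$ after dividing by $a$ but only qualitatively controlled — do not spoil the claimed one-sided inequalities. The slack in the inequalities (e.g.\ the generous constant $(\nu+\ell)\log(\nu+\ell)$ in (e) and $a\log|\im\sigma|$ in (f), and arbitrary $\beta_1,\beta_2$ in (b)--(d)) is exactly what makes this manageable: all errors can be swallowed by enlarging or shrinking constants, so no sharp estimate is ever required. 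A secondary point to handle cleanly is that $S_\sigma$, $S_\zeta^{(a)}$ are prescribed only up to the deformations allowed in the paragraph before the lemma, so the constants may depend on the (fixed) choice of contour shape and on $c$, but not on $a,\nu,\ell$.
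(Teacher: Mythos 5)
Your strategy is essentially the paper's: items (a)--(c) are extracted from the quadratic regime \eqref{lemma1:eq1} and items (d)--(f) from the logarithmic regime \eqref{lemma1:eq2}, with all slack absorbed into constants. The only structural difference is that you apply \eqref{lemma1:eq2} directly at $\sigma\in S_\sigma$ (legitimate, since the vertical contour lies in $\Omega$ once $|\im{\sigma}|$ is large), whereas the paper first reduces to purely imaginary arguments via the exact identity $\re{\left(F_a(\sigma)-F_{a+\re{\sigma}}(i\im{\sigma})\right)}=F_a(\re{\sigma})\to 0$; both routes give $\re{F_a(\sigma)}=-(\tfrac{\pi}{2}+o(1))|\im{\sigma}|$ in (d) and the cancellation of the $\pm\tfrac{\pi}{2}|\im{\sigma}|$ contributions in (e). One precision issue in (b)--(c): \eqref{lemma1:eq1} controls the relative error of $aF_a(z)$ against $z^2/2$, not of $a\re{F_a(z)}$ against $\re{(z^2)}/2$, so your display $a\re{F_a(\sigma)}=\tfrac12(c^2-|\im{\sigma}|^2)(1+o_a(1))$ is not literally correct when $|\im{\sigma}|\approx c$. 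The repair is what the paper does: from $|aF_a(z)-z^2/2|<\epsilon|z|^2/2$ deduce $(1-\epsilon)|\re{z}|^2-(1+\epsilon)|\im{z}|^2<2a\re{F_a(z)}<(1+\epsilon)|\re{z}|^2-(1-\epsilon)|\im{z}|^2$, then use that $\re{\sigma}$ is constant on $S_\sigma$ (respectively, $\im{\zeta}$ is bounded on $S_\zeta^{(a)}$, which is a flat rectangle rather than an ``eventually vertical'' contour) and that the constraint at exponent $1-\delta$ forces $|z|\le a^{1-\delta_1}$ for a pre-fixed $\delta_1\in(0,\delta)$.

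The genuine gap is in (f): the inequality \eqref{cor2:eq6} contains no adjustable constant, so a bound of the form $a\log{|\im{\sigma}|}+O(a)$ cannot be ``adjusted'' into $a\log{|\im{\sigma}|}$. The missing term is already present in your own decomposition: keep $-\log{\GammaF{a}}\sim -a\log{a}+a$ (equivalently the paper's $-(a-\re{\sigma}-\tfrac12)\log{a}$), which is far more negative than any $O(a)$ error and swallows them all; combined with $a\log{|\sigma|}\le a\log{|\im{\sigma}|}+o(1)$ on the portion $|\im{\sigma}|\ge a^{1+\delta}$ of $S_\sigma$, this yields the stated constant-free bound, exactly as in the paper. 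With that repair, and the (b)--(c) precision fix above, the proposal is sound and matches the paper's proof in substance.
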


\begin{proof}
	The formula~\eqref{cor2:eq1} follows directly from~\eqref{lemma1:eq1}. To prove~\eqref{cor2:eq2} and~\eqref{cor2:eq3}, fix~$\delta_1 \in (0, \delta)$, and note that because of~\eqref{lemma1:eq1}, one can choose a small~$\epsilon>0$ such that for sufficiently large~$a$ and for all~$z$ such that~$|z| \le a^{1-\delta_1}$ one has
	\begin{equation}
		\left|aF_a(z) - \frac{z^2}{2}\right| < \epsilon \left| \frac{z^2}{2}\right|. 
	\end{equation}
	Thus, passing to the real part on the left-hand side of the inequality above, one has
	\begin{equation}
		\left|a\re{F_a(z)} - \frac{|\re{z}|^2-|\im{z}|^2}{2}\right| < \epsilon \frac{|\re{z}|^2+|\im{z}|^2}{2},
	\end{equation}
	and consequently
	\begin{equation}
		(1-\epsilon) |\re{z}|^2 - (1+ \epsilon) |\im{z}|^2 < 2a \re{F_a(z)} <  (1+\epsilon)|\re{z}|^2 - (1 - \epsilon) |\im{z}|^2. 
	\end{equation}
	Now, for~$z \in S_\sigma$, the real part~$\re{z}$ is constant and, for~$a$ large enough, $|\im{z}| \le a^{1-\delta}$ implies~$|z| \le a^{1-\delta_1}$, so we arrive at~\eqref{cor2:eq2}. On the other hand, for~$z \in S_\zeta^{(a)}$, the imaginary part~$\im{z}$ is bounded uniformly in~$a$ and, for~$a$ large enough, $|\re{z}| \le a^{1-\delta}$ implies~$|z| \le a^{1-\delta_1}$, so we arrive at~\eqref{cor2:eq3}.

	To establish~\eqref{cor2:eq4}, notice that
	\begin{equation}
		\label{cor2_proof:eq1}
		\re{\left(F_a(\sigma) -F_{a+\re{\sigma}}(i \im{\sigma})\right)} = F_a(\re{\sigma}).
	\end{equation}
	Since~$\re{\sigma}$ is constant, the latter expression converges to zero as~$a \to \infty$ by~\eqref{cor2:eq1}. Consequently, and since~$|\im{\sigma}| \ge a^{1+\delta}$ implies~$|\im{\sigma}| \ge (a + \re{\sigma})^{1+\delta}$, it is sufficient to prove
	\begin{equation}
		-\beta_1 |w| \le \re{F_a(iw)} \le - \beta_2 |w|, \quad |w| \ge a^{1+\delta},
	\end{equation}
	for sufficiently large~$a$.

	To that end, use~\eqref{lemma1:eq2} to choose a small~$\epsilon>0$ such that for sufficiently large~$a$ and for all~$w$ such that~$|w| \ge a^{1+\delta}$ one has
	\begin{equation}
		\left| F_a(iw) - \left(iw+a-\frac{1}{2}\right) \log{\frac{iw}{a}} + i w \left( 1 - \frac{1}{2a}\right) - a \right| < \epsilon a. 
	\end{equation}
	Hence, passing to the real part, one obtains
	\begin{equation}
		\left| \re{F_a(i w)} - \left(a-\frac{1}{2}\right) \log{\frac{|w|}{a}} + \frac{\pi |w|}{2} -a \right| < \epsilon a, 
	\end{equation}
and
	\begin{equation}
		\left(a-\frac{1}{2}\right) \log{\frac{|w|}{a}} - \frac{\pi |w|}{2} + a(1 - \epsilon) < \re{F_a(iw)} < \left(a-\frac{1}{2}\right) \log{\frac{|w|}{a}} - \frac{\pi |w|}{2} + a(1 + \epsilon).
	\end{equation}

	Rewrite the left- and the right-hand side of the expression as
	\begin{equation}
		\left(a-\frac{1}{2}\right) \log{\frac{|w|}{a}} - \frac{\pi |w|}{2} + a(1 \pm \epsilon) = -\frac{\pi |w|}{2} \left(1 - \frac{2a-1}{\pi|w|}\log{\frac{|w|}{a}} - \frac{2a(1 \pm \epsilon)}{\pi |w|}\right).
	\end{equation}
	Since~$|w| \ge a^{1+\delta}$, the expression in the parentheses converges to one, and the claim follows.

	To prove~\eqref{cor2:eq5}, we first consider~$z = iw$ with~$|w| \ge (\nu + \ell)^{1+\delta} > \nu^{1+\delta}$ and use~\eqref{lemma1:eq2} to write 
	\begin{equation}
		\begin{aligned}
			&\re{F_{\nu}(iw)} < \left(\nu-\frac{1}{2}\right) \log{\frac{|w|}{\nu}} - \frac{\pi |w|}{2} + \nu(1 + \epsilon),\\
			-&\re{F_{\nu + \ell}(iw)} < -\left(\nu + \ell - \frac{1}{2}\right) \log{\frac{|w|}{\nu + \ell}} + \frac{\pi |w|}{2} - (\nu + \ell)(1 - \epsilon),
		\end{aligned}
	\end{equation}
This implies
	\begin{equation}
		\begin{aligned}
			&\re{(F_{\nu}(iw) - F_{\nu + \ell}(iw))}< \left(\nu-\frac{1}{2}\right) \log{\frac{|w|}{\nu}} -\left(\nu + \ell - \frac{1}{2}\right) \log{\frac{|w|}{\nu + \ell}} + 2 \epsilon \nu - \ell (1 - \epsilon)\\
			&<- \ell \log{|w|} + \left(\nu + \ell - \frac{1}{2}\right) \log{(\nu + \ell)} - \left(\nu - \frac{1}{2}\right) \log{\nu} + 2 \epsilon \nu - \ell(1- \epsilon)\\
			&< - \ell \log{|w|} + (\nu + \ell) \log{(\nu + \ell)} - C \ell
		\end{aligned}
	\end{equation}
	for some constant~$C>0$ and~$\nu$ large enough. Now, observe that both~$F_\nu(\re{\sigma})$ and~$F_{\nu + \ell}(\re{\sigma})$ converge to zero and are certainly dominated by~$C \ell$. Finally, using~\eqref{cor2_proof:eq1} implies the claim.

	To prove~\eqref{cor2:eq6}, fix~$\delta_1 \in (0,\delta)$. Again, use~\eqref{lemma1:eq2} and write
	\begin{equation}
		\label{cor2_proof:eq2}
		\re{F_a(iw)} < \left(a-\frac{1}{2}\right) \log{\frac{|w|}{a}} - \frac{\pi |w|}{2} + a(1 + \epsilon),
	\end{equation}
	for some~$\epsilon > 0$ and~$a$ large enough, uniformly for~$|w| \ge a^{1+\delta_1}$. By choosing~$a$ even larger, if necessary, and by expanding the log-gamma function in a series by using~\eqref{lemma1_proof:eq0}, we find
	\begin{equation}
		\log{\GammaF{iw}} = \left(i w-\frac{1}{2}\right)\log{(iw)}-iw +O(1).
	\end{equation}
	Passing to the real part gives
	\begin{equation}
		\re{(\log{\GammaF{iw}})} = -\frac{1}{2} \log{|w|} - \frac{\pi |w|}{2}  + O(1),
	\end{equation}
and thus there exists~$C>0$ such that for sufficiently large~$|w|$ one has
	\begin{equation}
		\label{cor2_proof:eq3}
		-\re{(\log{\GammaF{iw}})} \le  \frac{1}{2} \log{|w|} + \frac{\pi |w|}{2}  + C.
	\end{equation}
	Adding together~\eqref{cor2_proof:eq2} and~\eqref{cor2_proof:eq3} yields 
	\begin{equation}
		\label{eq:4.35}
		\re{(F_a(iw)- \log{\GammaF{i w}})} \le a \log{|w|} - \left(a-\frac{1}{2}\right) \log{a} + a(1+\epsilon) +C. 
	\end{equation}

	By using~\eqref{lemma1_proof:eq0} and expanding in a series around~$\re{\sigma}/\im{\sigma}$, one can find, after passing to the real parts, that
	\begin{equation}
		\label{eq:4.36}
		\re{\left(\log{\GammaF{-\sigma}} - \log{\GammaF{-i \im{\sigma}}}\right)} = -\re{\sigma} \log{|\im{\sigma}|} + O(1).
	\end{equation}
	Further, it is readily verified that~\eqref{eq:4.35}, \eqref{eq:4.36}, and~\eqref{cor2_proof:eq1} yield
	\begin{equation}	
		\re{\left(F_a(-\sigma) - \log \Gamma(-\sigma)\right)} \le a \log{|\im{\sigma}|} - \left(a-\re{\sigma}-\frac{1}{2}\right) \log{a} + a(1+\epsilon) + \widetilde{C} 
	\end{equation}
	for some constant~$\widetilde{C}>0$ and~$a$ large enough. Further, we see that the negative term with~$a \log{a}$ dominates two last terms, and we arrive at the desired bound. Since~$|\im{\sigma}| \ge a^{1+\delta}$ implies~$|\im{\sigma}| \ge (a-\re{\sigma})^{1+\delta_1}$ for large enough~$a$, the proof is concluded.
\end{proof}

The following lemma is more subtle.

\begin{lem}
	\label{lemma45}
	If~$\nu = o(\ell)$, for every~$\epsilon>0$ one can choose the parameter~$\nu$ sufficiently large so that for all~$\sigma \in S_\sigma$ one has
	\begin{equation}
		\label{lemma3:eq1}
		\begin{aligned}
			2\nu\re{(F_{\nu}(\sigma) - F_{\nu + \ell}(\sigma))}  \le &(1+\epsilon) |\re{\sigma}|^2 - \nu |\im{\sigma}| \int \limits_{\frac{\nu+\re{\sigma}}{|\im{\sigma}|}}^{\frac{\nu + \ell+\re{\sigma}}{|\im{\sigma}|}} \frac{d \theta}{1+\theta^2} \\
			&= (1+\epsilon) |\re{\sigma}|^2 - \nu |\im{\sigma}| \int \limits_{\frac{|\im{\sigma}|}{\nu + \ell + \re{\sigma}}}^{\frac{|\im{\sigma}|}{\nu + \re{\sigma}}} \frac{d\theta}{1+\theta^2}. 
		\end{aligned}
	\end{equation}
\end{lem}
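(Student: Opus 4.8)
\emph{Proof plan.} The plan is to reduce the estimate to the elementary identity, valid since $\ell$ is a positive integer,
\[
  \frac{\GammaF{\sigma+\nu}\,\GammaF{\nu+\ell}}{\GammaF{\nu}\,\GammaF{\sigma+\nu+\ell}}=\prod_{k=0}^{\ell-1}\frac{\nu+k}{\sigma+\nu+k},
\]
which, after taking logarithms (legitimate for $\nu$ large, as each factor is then close to $1$ and $\re\sigma$ stays bounded on $S_\sigma$) and invoking the definition~\eqref{eq:def_Fa}, gives
\[
  F_{\nu}(\sigma)-F_{\nu+\ell}(\sigma)=-\sum_{k=0}^{\ell-1}\log\!\left(1+\frac{\sigma}{\nu+k}\right)+\sigma\log\frac{\nu+\ell}{\nu}+\frac{\sigma}{2}\left(\frac{1}{\nu}-\frac{1}{\nu+\ell}\right),
\]
together with the same identity for the real argument $\re\sigma$. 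First I would take real parts above and split each summand as
\[
  \re\log\!\left(1+\frac{\sigma}{\nu+k}\right)=\frac12\log\!\left(1+\frac{|\im\sigma|^2}{(\nu+k+\re\sigma)^2}\right)+\log\frac{\nu+k+\re\sigma}{\nu+k}.
\]
Collecting the second pieces with the terms $\re\sigma\,\log\frac{\nu+\ell}{\nu}$ and $\tfrac{\re\sigma}{2}(\tfrac1\nu-\tfrac1{\nu+\ell})$, one recognizes exactly $F_{\nu}(\re\sigma)-F_{\nu+\ell}(\re\sigma)$, so that
\[
  2\nu\re\bigl(F_{\nu}(\sigma)-F_{\nu+\ell}(\sigma)\bigr)=-\nu\sum_{k=0}^{\ell-1}\log\!\left(1+\frac{|\im\sigma|^2}{(\nu+k+\re\sigma)^2}\right)+2\nu\bigl(F_{\nu}(\re\sigma)-F_{\nu+\ell}(\re\sigma)\bigr).
\]

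The first sum I would bound from below using $\log(1+u)\ge u/(1+u)$ for $u\ge0$, which turns it into $\sum_{k=0}^{\ell-1}|\im\sigma|^2/\bigl((\nu+k+\re\sigma)^2+|\im\sigma|^2\bigr)$; for $\nu>|\re\sigma|$ this summand is decreasing in $k$, so the sum dominates $\int_0^{\ell}|\im\sigma|^2\,dt/\bigl((\nu+t+\re\sigma)^2+|\im\sigma|^2\bigr)$, which after the substitution $\nu+t+\re\sigma=|\im\sigma|\theta$ equals $|\im\sigma|\int_{(\nu+\re\sigma)/|\im\sigma|}^{(\nu+\ell+\re\sigma)/|\im\sigma|}d\theta/(1+\theta^2)$; this produces the $-\nu|\im\sigma|\int d\theta/(1+\theta^2)$ term in~\eqref{lemma3:eq1}. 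For the second term, $\re\sigma$ is a fixed bounded number along $S_\sigma$, so Lemma~\ref{lemma44}(a) (equivalently~\eqref{lemma1:eq1}) gives $2\nu F_{\nu}(\re\sigma)\to(\re\sigma)^2$ and $2(\nu+\ell)F_{\nu+\ell}(\re\sigma)\to(\re\sigma)^2$; multiplying the latter by $\nu/(\nu+\ell)\to0$ — which is exactly where $\nu=o(\ell)$ enters — gives $2\nu F_{\nu+\ell}(\re\sigma)\to0$, hence $2\nu\bigl(F_{\nu}(\re\sigma)-F_{\nu+\ell}(\re\sigma)\bigr)\to(\re\sigma)^2$, which is therefore $\le(1+\epsilon)|\re\sigma|^2$ for $\nu$ large. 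Adding the two bounds gives the first inequality in~\eqref{lemma3:eq1}, uniformly in $\sigma\in S_\sigma$; the second equality there is merely the substitution $\theta\mapsto1/\theta$ in the arctangent integral.

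I expect the main obstacle to be the bookkeeping in the splitting step: checking that the ``real-slice'' remainder is precisely $F_{\nu}(\re\sigma)-F_{\nu+\ell}(\re\sigma)$ with no stray lower-order terms, and that the Riemann-sum comparison runs in the correct direction (we need $\sum\ge\int$, hence the monotonicity of the summand, hence $\nu>|\re\sigma|$). The remaining ingredients — the telescoping product formula, the passage $\log\prod=\sum\log$ for $\nu$ large, and the two applications of Lemma~\ref{lemma44}(a) — are routine.
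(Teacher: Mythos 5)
Your proposal is correct and follows essentially the same route as the paper: you split $\re\bigl(F_{\nu}(\sigma)-F_{\nu+\ell}(\sigma)\bigr)$ into the imaginary-direction sum plus the real-slice term $F_{\nu}(\re\sigma)-F_{\nu+\ell}(\re\sigma)$, bound the sum below via $\log(1+u)\ge u/(1+u)$ and a sum-versus-integral comparison, and dispose of the real-slice term with Lemma~\ref{lemma44}(a) together with $\nu=o(\ell)$, exactly as in the paper (which performs the same split through the identity~$\re\bigl(F_a(\sigma)-F_{a+\re\sigma}(i\im\sigma)\bigr)=F_a(\re\sigma)$ rather than your telescoping Gamma product). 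The only cosmetic caveat is your justification of $\log\prod=\sum\log$: the factors are not close to $1$ for large $|\im\sigma|$, but since you immediately take real parts the identity holds unconditionally (real parts of logarithms are logarithms of moduli), so nothing is lost.
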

\begin{remark}
	Even though both integrals can be easily evaluated explicitly via $arctan$, we keep them as is for further convenience. 
\end{remark}

\begin{proof}
	First, similarly to~\eqref{cor2_proof:eq1},
	\begin{equation}
		\begin{aligned}
			&2 \nu\re{\left(F_{\nu}(\sigma) - F_{\nu + \ell}(\sigma)-F_{\nu+\re{\sigma}}(i \im{\sigma}) + F_{\nu + \ell+\re{\sigma}}(i \im{\sigma})\right)} \\
			&= 2\nu\left(  F_{\nu}(\re{\sigma}) - F_{\nu +\ell}(\re{\sigma}) \right) \to |\re{\sigma}|^2
		\end{aligned}
	\end{equation}
	as~$\nu \to \infty$ with~$\nu = o(\ell)$. In the last identity, we used~\eqref{cor2:eq1}.

	Therefore, we need only prove
	\begin{equation}
		\label{lemma3_proof:eq1}
		2\nu\re{(F_{\nu}(i w) - F_{\nu + \ell}(iw))}  \le  -\nu |w| \int \limits_{\frac{\nu}{|w|}}^{\frac{\nu + \ell}{|w|}} \frac{d \theta}{1+\theta^2} = -\nu |w| \int \limits_{\frac{|w|}{\nu + \ell}}^{\frac{|w|}{\nu}} \frac{d\theta}{1+\theta^2}, \quad w \in \mathbb{R}.
	\end{equation}

	To that end, write
	\begin{equation}
		2\re{(F_{\nu}(i w) - F_{\nu + \ell}(iw))} = -\sum \limits_{k=\nu}^{\nu+\ell-1} \log{\left(1 + \left( \frac{w}{k}\right)^2 \right)}.
	\end{equation}
	The inequality
	\begin{equation}
		\log{\frac{b}{a}} \ge 1 - \frac{a}{b}
	\end{equation}
	yields
	\begin{equation}
		I := \sum \limits_{k=\nu}^{\nu+\ell-1} \log{\left(1 + \left( \frac{w}{k}\right)^2 \right)} \ge w^2 \sum \limits_{k=\nu}^{\nu+\ell-1} \frac{1}{w^2 + k^2}. 
	\end{equation}
	Bounding each term from below by the integral,
	\begin{equation}
		I \ge w^2 \sum \limits_{k=\nu}^{\nu+\ell-1} \int \limits_k^{k+1} \frac{dx}{w^2+x^2} = w^2 \int \limits_{\nu}^{\nu + \ell} \frac{dx}{w^2+x^2},
	\end{equation}
	and changing the variable in the integral, we arrive at the first inequality in~\eqref{lemma3_proof:eq1}; the second bound follows by changing variables~$\theta \to \frac{1}{\theta}$. 
\end{proof}

We will also need the following straightforward lemmas.
\begin{lem}
	\label{lemma46}
	Let~$\sigma \in S_\sigma$. Then, there exists a constant~$C>0$ such that
	\begin{equation}
		\label{lemma4:eq1}
		\left|e^{F_n(-\sigma)}\right|=\left| \frac{e^{-\frac{\sigma}{2n}}\GammaF{n-\sigma}}{\GammaF{n} n^{-\sigma}}\right| \le C.
	\end{equation}
\end{lem}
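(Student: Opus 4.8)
The plan is to unwind the definition of $F_n$ and then reduce the bound to a uniform estimate for a ratio of gamma functions. The identity in~\eqref{lemma4:eq1} is immediate from the definition~\eqref{eq:def_Fa}, which gives $F_n(-\sigma) = \log\GammaF{n-\sigma} - \log\GammaF{n} + \sigma\log n - \tfrac{\sigma}{2n}$. For the bound, I would parametrize $S_\sigma$ by $\sigma = -c + it$, $t\in\mathbb{R}$, where $c>0$ is the fixed abscissa at which $S_\sigma$ crosses the real axis (see Fig.~\ref{contour3}), so that $\re{\sigma}=-c$ is constant along $S_\sigma$. Since $|n^{\sigma}| = n^{\re{\sigma}} = n^{-c}$ and $|e^{-\sigma/(2n)}| = e^{-\re{\sigma}/(2n)} = e^{c/(2n)}$, taking moduli yields
\begin{equation*}
\left|e^{F_n(-\sigma)}\right| = \frac{\bigl|\GammaF{n+c-it}\bigr|}{\GammaF{n}}\, n^{-c}\, e^{c/(2n)},
\end{equation*}
and the last factor is bounded by $e^{c/2}$ for all $n\ge 1$, hence harmless.

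The next step is the elementary inequality~\eqref{eq:app1} applied with $x = n+c>0$ and $y = -t$, which gives $\bigl|\GammaF{n+c-it}\bigr| \le \GammaF{n+c}$; the whole matter then reduces to showing that $\GammaF{n+c}/(\GammaF{n}\, n^{c})$ is bounded uniformly over $n\in\N$. This is classical — the ratio tends to $1$ — but to stay self-contained I would deduce it from the two-sided Stirling estimate~\eqref{eq:app2}: after cancelling the $\sqrt{2\pi}$'s and the exponentials one is left with $(1+c/n)^{n-1/2}(1+c/n)^{c}\,e^{1/(12(n+c))}$, which is at most $e^{c^{2}+1/12}$ for every $n\ge 1$ by the inequality $\log(1+x)\le x$. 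Assembling the three estimates then produces the bound with the explicit constant $C = e^{c^{2}+1/12+c/2}$, independent of both $\sigma\in S_\sigma$ and $n$.

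I do not expect any real obstacle here: once the modulus is written out factor by factor, the estimate is routine, and all the required tools are already available from Proposition~\ref{prop:A1} and Proposition~\ref{prop:A2}. The only point worth a moment's care is the uniformity in $n$ of the gamma ratio, and that is exactly what~\eqref{eq:app2} supplies (alternatively, one may simply invoke that a convergent sequence is bounded, together with finiteness of each term).
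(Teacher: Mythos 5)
Your argument is correct and essentially the paper's: both take absolute values and use~\eqref{eq:app1} to reduce the claim to boundedness of the real-argument quantity $e^{F_n(-\re{\sigma})}=\GammaF{n+c}\,n^{-c}e^{c/(2n)}/\GammaF{n}$, which the paper handles by citing $F_n(-\re{\sigma})\to 0$ from~\eqref{cor2:eq1} (exactly your ``a convergent sequence is bounded'' alternative), while you instead bound it explicitly via the two-sided Stirling estimate~\eqref{eq:app2}. The only nitpick is bookkeeping: a factor $e^{-c}$ survives the cancellation of the exponentials in~\eqref{eq:app2}, so your displayed intermediate expression and the explicit constant are off by a harmless factor $e^{c}$ --- immaterial, since the lemma only asserts the existence of some $C$.
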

\begin{proof}
	Using~\eqref{eq:app1} gives
	\begin{equation}
		\left| \frac{e^{-\frac{\sigma}{2n}}\GammaF{n-\sigma}}{n^{-\sigma}\GammaF{n} }\right| \le  \frac{e^{-\frac{\re{\sigma}}{2n}}\GammaF{n-\re{\sigma}} }{n^{-\re{\sigma}}\GammaF{n}} =e^{{F_n(-\re{\sigma})}}.
	\end{equation}
	Since~$F_n(-\re{\sigma}) \to 0$ as~$n \to \infty$ by~\eqref{cor2:eq1}, the claim follows.
\end{proof}

\begin{lem}
  \label{lemma47}
  Let~$\zeta \in S_\zeta^{(n)}$, where the contour is specified in Fig.~\ref{contour2}. Then, there exists a constant~$C>0$ independent of~$n$ such that
    \begin{equation}
	    \label{lemma5:eq1}
	    \left|e^{-F_n(-\zeta)}\right|=\left|\frac{\GammaF{n} e^{\frac{\zeta}{2n}}}{n^{\zeta}\GammaF{n-\zeta}}\right| \le C.
    \end{equation}
  \end{lem}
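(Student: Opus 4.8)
The plan is to mimic the proof of Lemma~\ref{lemma46}, with one twist: here $\GammaF{n-\zeta}$ sits in the \emph{denominator}, so in place of the upper bound $|\GammaF{x+iy}|\le\GammaF{x}$ from~\eqref{eq:app1} I will need a matching \emph{lower} bound for $|\GammaF{n-\zeta}|$, which~\eqref{eq:app3} supplies. First I would pass to moduli: since $\log n$ and $\tfrac1{2n}$ are real, $|n^{\zeta}|=n^{\re\zeta}$ and $|e^{\zeta/(2n)}|=e^{\re\zeta/(2n)}$, so
\[
\left|e^{-F_n(-\zeta)}\right|=\frac{\GammaF{n}\,e^{\re\zeta/(2n)}}{n^{\re\zeta}\,\bigl|\GammaF{n-\zeta}\bigr|}.
\]
On $S_\zeta^{(n)}$ the imaginary part is bounded by a fixed constant $c_0\in(0,1)$ independent of $n$, and the contour may be taken with $-c_0\le\re\zeta\le n-\tfrac12$ (it still encircles the required poles $0,1,\dots,n-1$), so in particular $n-\re\zeta\ge\tfrac12$.

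Applying~\eqref{eq:app3} with $x=n-\re\zeta\ge\tfrac12$ and $y=-\im\zeta$ gives $\bigl|\GammaF{n-\zeta}\bigr|\ge\GammaF{n-\re\zeta}\,e^{-\pi|\im\zeta|/2}\ge\GammaF{n-\re\zeta}\,e^{-\pi c_0/2}$, whence
\[
\left|e^{-F_n(-\zeta)}\right|\le e^{\pi c_0/2}\,\frac{\GammaF{n}\,e^{\re\zeta/(2n)}}{n^{\re\zeta}\,\GammaF{n-\re\zeta}}=e^{\pi c_0/2}\,e^{-F_n(-\re\zeta)},
\]
and it remains to bound $e^{-F_n(-u)}$ uniformly for $n\ge1$ and $u=\re\zeta\in[-c_0,\,n-\tfrac12]$, i.e.\ to show $F_n(-u)\ge -C'$. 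For this I would invoke monotonicity of the digamma function $\psi$: for every real $u<n$,
\[
\log\GammaF{n}-\log\GammaF{n-u}=\int_{n-u}^{n}\psi(t)\,dt\le u\,\psi(n),
\]
which is valid whether $u\ge0$ or $u<0$ because $\psi$ is increasing; combined with the definition~\eqref{eq:def_Fa} this gives $F_n(-u)\ge u\bigl(\log n-\psi(n)-\tfrac1{2n}\bigr)$. Using the standard estimates $\log x-\tfrac1x<\psi(x)<\log x-\tfrac1{2x}$ (see~\cite{DLMF}), the bracket lies in $(0,\tfrac1{2n})$, so $F_n(-u)\ge0$ when $u\ge0$ and $F_n(-u)\ge -\tfrac{c_0}{2n}\ge-\tfrac12$ when $u\in[-c_0,0)$; either way $e^{-F_n(-\re\zeta)}\le e^{1/2}$, and combining the two displays yields $\left|e^{-F_n(-\zeta)}\right|\le e^{(\pi c_0+1)/2}=:C$, independent of $n$ and of $\zeta\in S_\zeta^{(n)}$.

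The only point requiring a little care is the appeal to~\eqref{eq:app3}, which is valid only when the argument of $\Gamma$ has real part $\ge\tfrac12$; this is precisely why I require $\re\zeta\le n-\tfrac12$ on $S_\zeta^{(n)}$, a harmless normalization of the contour of Fig.~\ref{contour2}. Everything else is routine. As an alternative to the digamma estimates, one could bound $\GammaF{n}/\GammaF{n-u}$ directly by a telescoping application of Wendel's inequality $\GammaF{x+s}\le x^{s}\GammaF{x}$ ($x>0$, $0\le s\le1$), which gives $\GammaF{n}\le n^{u}\GammaF{n-u}$ for $0\le u\le n-\tfrac12$ and hence $F_n(-u)\ge -\tfrac{u}{2n}\ge-\tfrac12$ on the nose.
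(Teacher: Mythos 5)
Your argument is correct and follows essentially the same route as the paper: both proofs first pass to real parts via \eqref{eq:app3} (the paper additionally cites \eqref{eq:app1}), reducing the claim to a uniform bound on $e^{-F_n(-\re{\zeta})}$ with $\re{\zeta}$ ranging over roughly $[-c_0,\,n-\tfrac12]$ and $|\im{\zeta}|$ bounded. The only divergence is in that last real-axis estimate: the paper finishes with the two-sided Stirling bound \eqref{eq:app2}, which gives $\frac{\GammaF{n}\,e^{\re{\zeta}/(2n)}}{n^{\re{\zeta}}\GammaF{n-\re{\zeta}}}\le \widetilde C\, e^{-\frac{\re{\zeta}}{2(n-\re{\zeta})}+\frac{1}{12n}}$, whereas you use monotonicity of the digamma function (or, in your aside, Wendel's inequality), an equally elementary and correct substitute yielding the same uniform constant.
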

  \begin{proof}
	  Applying~\eqref{eq:app1} and~\eqref{eq:app3} from Proposition~\ref{prop:A1}, we have
    \begin{equation}
	    \left|\frac{\GammaF{n}e^{\frac{\zeta}{2n}}}{n^{\zeta}\GammaF{n-\zeta}}\right| \le e^{\frac{\pi |\im{\zeta}|}{2}} \frac{\GammaF{n} e^{ \frac{\re{\zeta}}{2n}}}{n^{\re{\zeta}} \GammaF{n-\re{\zeta}}}. 
    \end{equation}

    Using~\eqref{eq:app2} then yields
    \begin{equation}
      \begin{aligned}
	      &\frac{\GammaF{n} e^{ \frac{\re{\zeta}}{2n}}}{n^{\re{\zeta}} \GammaF{n-\re{\zeta}}} \le \widetilde{C} \left(1+\frac{\re{\xi}}{n-\re{\xi}} \right)^{n-\re{\xi}-1/2} e^{-\re{\xi}+\frac{1}{12n}} \le\widetilde{C} e^{-\frac{\re{\zeta}}{2(n-\re{\zeta})} + \frac{1}{12n}},
      \end{aligned}
    \end{equation}
    which implies the claim. 
  \end{proof}

  \begin{lem}
	  \label{lemma48}
	  Let~$S_\sigma$ be defined as in Fig.~\ref{contour2}. Then, there exists~$C>0$ such that for all~$\sigma \in S_\sigma$, $\re{\sigma} \le -1$, one has
	  \begin{equation}
		  \left|\frac{\GammaF{n-\sigma} n^\sigma}{\GammaF{n}\GammaF{-\sigma}}\right| \le C.
	  \end{equation}
  \end{lem}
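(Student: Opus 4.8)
The plan is to reduce the complex estimate to an elementary one-variable inequality between gamma functions on the positive axis, and then to settle that inequality by a monotonicity argument.

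I would begin with the reduction. Since $\re\sigma\le-1$ we have $n-\re\sigma>0$ and $-\re\sigma\ge 1>\tfrac12$, so \eqref{eq:app1} and \eqref{eq:app3} give $|\GammaF{n-\sigma}|\le\GammaF{n-\re\sigma}$ and $|\GammaF{-\sigma}|\ge e^{-\pi|\im\sigma|/2}\GammaF{-\re\sigma}$. On the part of $S_\sigma$ (Fig.~\ref{contour2}) with $\re\sigma\le-1$ the imaginary part stays bounded, say $|\im\sigma|\le M$; combining this with $|n^\sigma|=n^{\re\sigma}$ yields
\[
\left|\frac{\GammaF{n-\sigma}\,n^\sigma}{\GammaF{n}\GammaF{-\sigma}}\right|\le e^{\pi M/2}\,R_n(a),\qquad R_n(a)\df\frac{\GammaF{n+a}}{\GammaF{n}\GammaF{a}}\,n^{-a},\quad a\df-\re\sigma\ge1 .
\]
Thus everything reduces to bounding $R_n(a)$ uniformly over $a\ge1$ and all large $n$.

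For this I would set $\phi(a)\df\log R_n(a)$. Using $\GammaF{n+a}=\GammaF{a}\prod_{k=0}^{n-1}(a+k)$ one gets $\phi(a)=\sum_{k=0}^{n-1}\log(a+k)-\log\GammaF{n}-a\log n$, hence $\phi(1)=0$ and $\phi'(a)=\sum_{k=0}^{n-1}\frac1{a+k}-\log n$. The right-hand side is strictly decreasing in $a$, equals $\big(1+\tfrac12+\dots+\tfrac1n\big)-\log n\in(0,1]$ at $a=1$, and tends to $-\log n<0$ as $a\to\infty$; so (for $n\ge2$) $\phi$ is unimodal with a single interior maximum at the root $a^{\ast}>1$ of $\phi'$, and $\sup_{a\ge1}\phi(a)=\phi(a^{\ast})$. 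To make this uniform in $n$ I would bound $a^{\ast}$: from $\sum_{k=0}^{n-1}\frac1{a^{\ast}+k}=\log n$ and $\sum_{k=0}^{n-1}\frac1{a+k}\le\frac1a+\log\!\big(1+\tfrac na\big)$ one obtains $\log n\le1+\log(1+n/a^{\ast})$, i.e.\ $a^{\ast}\le en/(n-e)\le 2e$ for $n$ large. Then, since $\phi'$ is nonnegative and decreasing on $[1,a^{\ast}]$ and $\phi(1)=0$,
\[
\phi(a^{\ast})=\int_1^{a^{\ast}}\phi'(a)\,da\le(a^{\ast}-1)\,\phi'(1)\le 2e-1 ,
\]
so $R_n(a)=e^{\phi(a)}\le e^{2e-1}$ for every $a\ge1$ and all large $n$, and the lemma follows with $C=e^{\pi M/2+2e-1}$.

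The one genuinely delicate step is the bound on $\phi(a^{\ast})$: because $\GammaF{n+a}/\GammaF{n}$ overshoots $n^a$ (the two agree only asymptotically for fixed $a$), one cannot simply feed the Stirling expansions of Proposition~\ref{prop:A2} into $R_n$; the substance of the argument is that the maximiser $a^{\ast}$ stays in a bounded interval and that $\phi(a^{\ast})$ stays bounded as $n\to\infty$, and the monotonicity of $\phi'$ is exactly what makes both of these transparent. (The estimate does require $n$ to be large — it fails for $n=1$, where $R_1(a)=a$ — but that is the only regime in which the lemma is applied.)
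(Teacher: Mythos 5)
Your argument is correct, and the reduction step is exactly the paper's: both proofs use \eqref{eq:app1} and \eqref{eq:app3} plus the boundedness of~$\im{\sigma}$ on~$S_\sigma$ to reduce the claim to a uniform bound on the real-variable ratio~$\psi_n(a)=\GammaF{n+a}\,n^{-a}/(\GammaF{n}\GammaF{a})$ for~$a=-\re{\sigma}\ge 1$. Where you diverge is in how that ratio is controlled. The paper fixes~$a$ and exploits monotonicity in~$n$: by Bernoulli's inequality, $\psi_{n+1}(a)/\psi_n(a)=(1+a/n)(1+1/n)^{-a}\le 1$ for~$a\ge1$, so~$\psi_n(a)\le\psi_2(a)=a(a+1)2^{-a}$, which is bounded on~$[1,\infty)$ — two lines, valid uniformly for all~$n\ge2$. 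You instead fix~$n$ and do calculus in~$a$: you show~$\log\psi_n$ is unimodal on~$[1,\infty)$ with~$\phi(1)=0$, locate the maximizer~$a^{\ast}$ via~$\sum_{k=0}^{n-1}(a^{\ast}+k)^{-1}=\log n$, bound it by~$a^{\ast}\le en/(n-e)\le 2e$, and then bound the maximum by~$(a^{\ast}-1)\phi'(1)\le 2e-1$ using~$H_n-\log n\le1$; all of these steps check out. Your route is somewhat longer and only gives the bound for~$n$ large (as you note, it fails at~$n=1$, and your constant kicks in once~$n\ge 2e$), which is harmless here since the lemma is only used in~$n\to\infty$ limits, whereas the paper's telescoping-in-$n$ trick is shorter and uniform down to~$n=2$; on the other hand, your version makes explicit where the supremum over~$a$ sits and why it stays bounded, and both arguments correctly avoid the pitfall you flag, namely that Stirling for~$\GammaF{n+a}/\GammaF{n}\approx n^a$ is not uniform in~$a$.
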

  \begin{proof}
	  By using~\eqref{eq:app1} and~\eqref{eq:app3}, we can write
	  \begin{equation}
		  \left|\frac{\GammaF{n-\sigma} n^\sigma}{\GammaF{n}\GammaF{-\sigma}}\right| \le \frac{\GammaF{n-\re{\sigma}} n^{\re{\sigma}}}{\GammaF{n}\GammaF{-\re{\sigma}}} e^{\frac{\pi|\im{\sigma}|}{2}}.
	  \end{equation}
	  Now, set
	  \begin{equation}
		  \psi_n(x) = \frac{\GammaF{n+x}n^{-x}}{\GammaF{n} \GammaF{x}},
	  \end{equation}
	  and notice that due to Bernoulli's inequality one can write
	  \begin{equation}
		  \frac{\psi_{n+1}(x)}{\psi_n(x)} = \left(1+ \frac{x}{n}\right) \left(1 + \frac{1}{n}\right)^{-x} \le 1
	  \end{equation}
	  for~$x \ge 1$. The rest is to observe that
	  \begin{equation}
		  \psi_2(x) = \frac{x(x+1)}{2^x}
	  \end{equation}
	  is bounded for~$x \ge 1$.
  \end{proof}
\begin{lem}
	\label{lemma49}
	Let~$\ell,\nu \in \N$. Set $\sigma=-x+i\delta$, where $x>0$, and $\delta\neq 0$. Then, for large enough~$\ell$ there exist absolute constants~$M>0$ and~$\alpha>0$ that only depend on~$\delta$ such that
	\begin{equation}
		\label{eq:LemmaMeij1_eq1}
		\left|\frac{\Gamma(\nu + \ell)\Gamma(\nu + \sigma)}{\Gamma(\nu+\ell + \sigma)}\right|\leq M e^{\alpha \ell}.
	\end{equation}
\end{lem}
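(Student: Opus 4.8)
The plan is to reduce everything to elementary bounds on $\Gamma$ along the horizontal line $\{-x+i\delta:\ x>0\}$. Since $\im(\nu+\sigma)=\im(\nu+\ell+\sigma)=\delta\neq 0$, both arguments stay away from the poles of $\Gamma$, so
\[
R:=\left|\frac{\Gamma(\nu+\ell)\,\Gamma(\nu+\sigma)}{\Gamma(\nu+\ell+\sigma)}\right|=\Gamma(\nu+\ell)\,\frac{|\Gamma(\nu-x+i\delta)|}{|\Gamma(\nu+\ell-x+i\delta)|}
\]
is finite for every $x>0$ and must be bounded uniformly in $x$. The three ingredients I would use are: (i) from Proposition~\ref{prop:A1}, $|\Gamma(s+i\delta)|\le\Gamma(s)$ for $s>0$ and $|\Gamma(s+i\delta)|\ge e^{-\pi|\delta|/2}\Gamma(s)$ for $s\ge\frac12$; (ii) as $s$ ranges over $[-\frac12,\frac12]$, $|\Gamma(s+i\delta)|$ stays between two positive constants depending only on $\delta$ (continuity on a compact set, away from the poles); (iii) the reflection formula $\Gamma(z)\Gamma(1-z)=\pi/\sin\pi z$ together with $|\sin\pi(x-i\delta)|^{2}=\sin^{2}\pi x+\sinh^{2}\pi\delta\in[\sinh^{2}\pi\delta,\cosh^{2}\pi\delta]$, which converts $\Gamma$ of an argument of negative real part into a reciprocal of $\Gamma$ of an argument of positive real part, up to a factor depending only on $\delta$.

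Next I would split on the position of $x$ relative to the breakpoints $\nu$ and $\nu+\ell$, where $\re(\nu+\sigma)$, resp.\ $\re(\nu+\ell+\sigma)$, changes sign, isolating also the two buffer strips $|x-\nu|<\frac12$ and $|x-(\nu+\ell)|<\frac12$ where a real part is near a pole and (ii)--(iii) are invoked. Outside the bulk region $\nu+\frac12\le x\le\nu+\ell-\frac12$, the ingredients (i)--(iii) bound $R$ by a constant depending only on $\delta$ times one of the ratios $\Gamma(\nu+\ell)/\Gamma(\nu+\ell-x)$, $\Gamma(\nu+\ell)/\Gamma(\ell-\tfrac12)$, $\Gamma(\nu+\ell)/\Gamma(\ell+\tfrac12)$, or $\Gamma(\nu+\ell)\,\Gamma(1-\nu-\ell+x)/\Gamma(1-\nu+x)$; using $\Gamma(a)/\Gamma(a-s)\le a^{s}$ for $a>s\ge 0$ and the monotonicity in $x$ of these ratios, each is $\le(\nu+\ell)^{c\nu}$ for an absolute $c$. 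In the bulk, reflection applied to $\Gamma(\nu-x+i\delta)$ (real part now $\le-\frac12$) together with (i) gives
\[
R\le\frac{\pi e^{\pi|\delta|}}{|\sinh\pi\delta|}\cdot\frac{\Gamma(\nu+\ell)}{\Gamma(1-\nu+x)\,\Gamma(\nu+\ell-x)};
\]
since $(1-\nu+x)+(\nu+\ell-x)=\ell+1$ is constant and $a\mapsto\log\Gamma(a)+\log\Gamma(\ell+1-a)$ is convex and symmetric about $a=\frac{\ell+1}{2}$, one gets $\Gamma(1-\nu+x)\,\Gamma(\nu+\ell-x)\ge\Gamma(\tfrac{\ell+1}{2})^{2}$, and Stirling's bounds \eqref{eq:app2} turn this into $\Gamma(\nu+\ell)/\Gamma(\tfrac{\ell+1}{2})^{2}\le 2^{\ell+1}(\nu+\ell)^{\nu}$.

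Collecting all the regions yields $R\le M_\delta\,2^{\ell}(\nu+\ell)^{\nu}$ with $M_\delta=\frac{2\pi e^{\pi|\delta|}}{|\sinh\pi\delta|}$; since $(\nu+\ell)^{\nu}=e^{\nu\log(\nu+\ell)}=e^{o(\ell)}$ once $\ell$ is large (the threshold for ``large enough'' depending on $\nu$ and $\delta$), one concludes $R\le M_\delta\,e^{\alpha\ell}$ for any fixed $\alpha>\log 2$, in particular with $\alpha=1$ and $M=M_\delta$. The main obstacle is exactly the bulk estimate: the naive factorization $|\Gamma(\nu+\sigma)/\Gamma(\nu+\ell+\sigma)|=\prod_{j=0}^{\ell-1}|\nu+j-x+i\delta|^{-1}$ only gives the useless $\Gamma(\nu+\ell)/|\delta|^{\ell}$, so one really has to exploit that the two shifted arguments have constant sum $\ell+1$ and recover the correct rate $2^{\ell}$ from the minimum of $\Gamma$ along that line via convexity; a secondary, purely bookkeeping difficulty is the careful handling of the two buffer strips where $\Gamma$ is evaluated near a pole.
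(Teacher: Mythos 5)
Your argument is correct, and it reaches the paper's conclusion by a genuinely different mechanism. The paper never invokes the reflection formula or convexity: it expands the ratio as the telescoping product $\Gamma(\nu+\sigma)/\Gamma(\nu+\ell+\sigma)=\prod_{j=0}^{\ell-1}(\nu+j+\sigma)^{-1}$, bounds from below the at most two factors whose real part is close to zero by $|\delta|$ and all the remaining factors by their distances to the breakpoint, which produces $([x]-\nu)!\,(\nu+\ell-[x]-2)!$ in the denominator; the worst case is then a binomial coefficient $C_{\ell-2}^{[x]-\nu}\le C_{\ell-2}^{[(\ell-2)/2]}\le e^{\alpha\ell}$, and the leftover $\Gamma(\nu+\ell)/\Gamma(\ell)\le e^{\alpha\ell}$ for $\ell$ large (with the same implicit dependence of the threshold on $\nu$ that you flag). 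You instead keep the gamma functions intact, use the horizontal-line estimates of Proposition~\ref{prop:A1} plus the reflection formula to reduce the bulk to $\Gamma(\nu+\ell)/\bigl(\Gamma(1-\nu+x)\Gamma(\nu+\ell-x)\bigr)$, and then exploit convexity and symmetry of $a\mapsto\log\Gamma(a)+\log\Gamma(\ell+1-a)$ to locate the worst case at the midpoint, recovering the same central-binomial rate $2^{\ell}$ up to $(\nu+\ell)^{O(\nu)}$. So the two proofs capture the identical phenomenon (the maximum over $x$ sits in the middle and costs a central binomial coefficient), but via different key lemmas: elementary integer-part bookkeeping on a product versus reflection plus log-convexity. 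Your approach is arguably cleaner in the bulk and makes the extremal $x$ transparent; the paper's is more elementary and avoids the buffer-strip case analysis almost entirely, since the factorization handles the near-pole factors by simply pricing them at $|\delta|$.

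Two small remarks. Your aside that the ``naive factorization only gives the useless $\Gamma(\nu+\ell)/|\delta|^{\ell}$'' undersells that route: only the one or two factors nearest the imaginary axis need the $|\delta|$ bound, and that refined factorization is exactly the paper's proof. Also, in your list of off-bulk ratios the region $0<x\le\nu-\tfrac12$ should carry the numerator factor $\Gamma(\nu-x)$ (i.e.\ the relevant quantity is $\Gamma(\nu+\ell)\Gamma(\nu-x)/\Gamma(\nu+\ell-x)$); this is harmless, since by monotonicity in $x$ it is still at most $C(\delta)\,\Gamma(\nu+\ell)/\Gamma(\ell+\tfrac12)\le C(\delta)(\nu+\ell)^{c\nu}$, consistent with your stated bound, but the bookkeeping should make it explicit.
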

\begin{proof}
	Note that
\begin{equation}
	\label{MeijL11}
	\frac{\Gamma(\nu+ \sigma)}{\Gamma(\nu + \ell + \sigma)}=\frac{1}{(\nu + \ell -1 + \sigma)(\nu + \ell -2 + \sigma)\times \cdots \times (\nu+\sigma)}
\end{equation}
It is not difficult to see that if~$x\notin (\nu,\nu + \ell -1)$, then 
\begin{equation}
	\left|\frac{\Gamma(\nu+ \sigma)}{\Gamma(\nu + \ell + \sigma)}\right| \le \frac{M(\delta)}{(\ell-1)!} = \frac{M(\delta)}{\GammaF{\ell}},
\end{equation}
and if~$x \in (\nu,\nu + \ell -1)$, then
 \begin{equation}
	 \left|\frac{\Gamma(\nu+ \sigma)}{\Gamma(\nu + \ell + \sigma)}\right| \le \frac{M(\delta)}{([x] - \nu)! (\nu+\ell - [x] -2)!} = \frac{C_{\ell-2}^{[x]-\nu}\ M(\delta)}{\GammaF{\ell-1}}.
 \end{equation}

 Now, observe a simple inequality for binomial coefficients
 \begin{equation}
	 C_n^{k} \le C_n^{[\frac{n}{2}]} = \frac{\GammaF{n+1}}{(\GammaF{[n/2]+1})^2} \le e^{\alpha n}.
 \end{equation}
 The latter bound follows from~\eqref{eq:app2} for~$n$ large enough. Likewise, one has
 \begin{equation}
	 \frac{\GammaF{\nu+\ell}}{\GammaF{\ell}} \le e^{\alpha \ell}
 \end{equation}
 for~$\ell$ large enough. Altogether, we see that~\eqref{eq:LemmaMeij1_eq1} holds and we are done.
\end{proof}

  The following proposition is the key to analyzing uniform convergence of the kernels we will encounter further.
\begin{prop}
	\label{PropositionMeijFirth}
	Let~$A_j, B_j \subset \overline{\R}$, $j=1,2$, and $I \subset \overline{\R}$ be compacts, and suppose that
	\begin{equation}
		A_j = g(I \times B_j),\quad j=1,2,
	\end{equation}
	where~$g(\cdot, \cdot)$ is a continuous function.
	Let $\left\{\phi_n\right\}_{n \in \N}$ and $\left\{\psi_n\right\}_{n \in \N}$ be two sequences of functions continuous on~$A_1$ and~$A_2$ and converging uniformly to $\phi$ and $\psi$, respectively. Then,
	\begin{equation}
		K_n(x,y) \df \int_I \phi_n(g(u,x))\psi_n(g(u,y))\, m(du) \underset{n \to \infty}{\longrightarrow} K(x,y) \df \int_I \phi(g(u,x))\psi(g(u,y))\, m(du),
	\end{equation}
	uniformly for~$(x,y) \in B_1 \times B_2$, where~$m(du)$ is a finite measure on~$I$.
\end{prop}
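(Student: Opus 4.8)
The plan is to run a routine cross-term estimate, the only mild care being to verify that all the compositions appearing make sense and that the integrals are well defined. First I would record two boundedness facts. Since $A_1$ is compact and $\phi$ is the uniform limit of functions continuous on $A_1$, the limit $\phi$ is itself continuous on $A_1$, hence bounded; set $\|\phi\|_{L^\infty(A_1)}<\infty$. Likewise each $\psi_n$ is continuous on the compact set $A_2$, hence bounded, and the uniform convergence $\psi_n\to\psi$ gives a finite bound $M:=\sup_n\|\psi_n\|_{L^\infty(A_2)}\le \|\psi\|_{L^\infty(A_2)}+\sup_n\|\psi_n-\psi\|_{L^\infty(A_2)}<\infty$. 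Next I would note that the hypothesis $A_j=g(I\times B_j)$ is precisely what guarantees $g(u,x)\in A_1$ and $g(u,y)\in A_2$ for all $u\in I$, $x\in B_1$, $y\in B_2$, so that the values $\phi_n(g(u,x))$, $\phi(g(u,x))$, $\psi_n(g(u,y))$, $\psi(g(u,y))$ are all defined; since $g$ is continuous, $u\mapsto\phi_n(g(u,x))$ and $u\mapsto\psi_n(g(u,y))$ are continuous, hence Borel measurable, and the integrands in $K_n$ and $K$ are bounded measurable functions of $u$. As $m$ is a finite measure on $I$, the quantities $K_n(x,y)$ and $K(x,y)$ are well-defined finite numbers for every $(x,y)\in B_1\times B_2$.

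The key step is then the estimate. Fixing $(x,y)\in B_1\times B_2$ and inserting the intermediate term $\phi(g(u,x))\psi_n(g(u,y))$, I would write
\begin{equation*}
\begin{aligned}
|K_n(x,y)-K(x,y)| &\le \int_I |\phi_n(g(u,x))-\phi(g(u,x))|\,|\psi_n(g(u,y))|\, m(du)\\
&\quad + \int_I |\phi(g(u,x))|\,|\psi_n(g(u,y))-\psi(g(u,y))|\, m(du)\\
&\le m(I)\left( M\,\|\phi_n-\phi\|_{L^\infty(A_1)} + \|\phi\|_{L^\infty(A_1)}\,\|\psi_n-\psi\|_{L^\infty(A_2)}\right),
\end{aligned}
\end{equation*}
where in the last line I used that $g(u,x)\in A_1$ and $g(u,y)\in A_2$ throughout the range of integration, together with the bounds from the first paragraph.

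To conclude, I would observe that the right-hand side of the last display is independent of $(x,y)$ and tends to $0$ as $n\to\infty$, since $\phi_n\to\phi$ uniformly on $A_1$ and $\psi_n\to\psi$ uniformly on $A_2$ by hypothesis, while $m(I)$, $M$, and $\|\phi\|_{L^\infty(A_1)}$ are finite constants. Taking the supremum over $(x,y)\in B_1\times B_2$ then yields $\sup_{(x,y)\in B_1\times B_2}|K_n(x,y)-K(x,y)|\to 0$, which is exactly the asserted uniform convergence. I do not expect a genuine obstacle here: the argument is a standard $\varepsilon$-estimate, and the compactness and continuity hypotheses play only the bookkeeping roles of making the uniform limits bounded and of ensuring that the arguments $g(u,x)$ stay in the sets $A_1$, $A_2$ where uniform convergence is available.
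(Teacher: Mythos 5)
Your proof is correct and follows essentially the same route as the paper: a standard cross-term (add-and-subtract) estimate, using that the ranges $g(I\times B_j)=A_j$ keep all arguments in the sets where uniform convergence and uniform boundedness hold. The only cosmetic differences are that you insert the intermediate term $\phi\,\psi_n$ whereas the paper uses $\phi_n\,\psi$, and that you carry the finite factor $m(I)$ explicitly, which the paper's displayed bound leaves implicit.
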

\begin{proof}
Write
\begin{equation}
	\label{InTh}
	\begin{aligned}
		\underset{\substack{x\in B_1\\y\in B_2}}{\sup} \left|K_n(x,y)-K(x,y)\right| \le &\underset{s \in A_2}{\sup} \left|\psi_n(s)-\psi(s)\right| \underset{s\in A_1}{\sup} \left|\phi_n(s)\right|\\
		&+ \underset{s\in A_2}{\sup}\left|\psi(s)\right| \underset{s \in A_1}{\sup}\left|\phi_n(s)-\phi(s)\right|.
	\end{aligned}
\end{equation}
Because of the uniform convergence, the limiting functions are continuous and the sequences are uniformly bounded. Hence, the right-hand side of~\eqref{InTh} converges to zero as~$n \to \infty$.
\end{proof}

\subsection{Proof of Theorem~\ref{thm22}}
\label{sect_42}
Further, we will often omit the argument~$\alpha$ in~$n(\alpha)$, $\nu(\alpha)$, and~$\ell(\alpha)$. Fix~$T>0$ such that $t_1, \ldots, t_N \in [0,T]$, and define
\begin{equation}
	g_\alpha(t) = \log{n} + [t \nu] \left(\log{\frac{\nu + \ell}{\nu}} + \frac{\ell}{2\nu (\nu + \ell)} \right)-\frac{1}{2n}.
\end{equation}
Set
\begin{equation}
	K_\alpha(q, x; r, y) \df K_{n,\vec{\nu},\vec{\ell}} (q, x; r, y)
\end{equation}
for the kernel~\eqref{ker_trunc_prod_log}, where~$n, \vec{\nu}$ and~$\vec{\ell}$ are as described in Section~\ref{Section2.2}.

One can plug in~\eqref{eq:thm1_transf} to rewrite the left-hand side of~\eqref{eq:thm1_fidi} in the form
\begin{equation}
	P \df \prob{\Crit_\alpha(t_k) \leq s_{k},\ k=1,\ldots,N} =  \prob{\Time_\alpha([t_k \nu]) \leq s_{k} + g_\alpha(t_k),\ k=1,\ldots,N}.
\end{equation}
The latter probability becomes a Fredholm determinant due to Theorem~\ref{thm21},
\begin{equation}
	P = \fdet{I-\chi_f K_\alpha  \chi_f}_{L_2(\{[t_1 \nu],\ldots,[t_N \nu]\}\times\Rpl)},
\end{equation}
where
\begin{equation}
	f(q,x)= \sum \limits_{j=1}^N \delta_{q,[t_j \nu]} 1_{(s_j+g_\alpha(t_j),+\infty)}(x). 
\end{equation}

By changing variables in the Fredholm determinant, one obtains
\begin{equation}
	\label{eq:thm22_proof_det}
	P = \fdet{I-\chi_{\hat{f}} \widehat{K}_\alpha  \chi_{\hat{f}}}_{L_2(\{t_1,\ldots,t_N\}\times\R, \mathfrak{c} \otimes m)},
\end{equation}
where~$\mathfrak{c}$ is the counting measure on~$\{t_1,\ldots,t_N\}$, $m(dx) = e^{-\kappa x}\ dx $,
\begin{equation}
	\label{eq:thm22_proof_eq1}
	\begin{aligned}
		\widehat{K}_\alpha(\tau, x; t, y) \df &e^{-x(T-\kappa-\tau)+y(T+\kappa-t)} \left(\frac{\GammaF{\nu + \ell}}{\GammaF{\nu}}\right)^{[t \nu] - [\tau \nu]} \\
	&\times K_\alpha ([\tau \nu], x + g_\alpha(\tau); [t \nu], y + g_\alpha(t)),\quad \kappa>0,
	\end{aligned}
\end{equation}
and
\begin{equation}
	\hat{f}(q,x)= \sum \limits_{j=1}^N \delta_{q,t_j} 1_{(s_j,+\infty)}(x). 
\end{equation}
Several commentaries are in order. Firstly, as easy to see, $g_\alpha(t) \to \infty$ as~$\alpha \to \infty$. Hence, above we assumed that~$\alpha$ is large enough so that~$s_j+g_\alpha(t_j)>0$ for all~$j=1,\ldots,N$. Since~$\hat{f}$ restricts the spacial coordinates in the kernel to~$(s_j,+\infty)$, our choice of~$\alpha$ justifies extending~$\Rpl$ to~$\R$ in the~$L_2$ space. Secondly, note that the kernel is conjugated by a gauge factor, which does not change the corresponding determinant. Lastly, we changed the measure in the Fredholm determinant, so the extra factor~$e^{\kappa(x+y)}$ appears in the kernel, and we choose~$\kappa$ to be
\begin{equation}
	\label{eq:thm22_proof_kappa}
	\kappa = \frac{1}{3} \min \{ |t-\tau| \in \R|\, t,\tau \in \{0, t_1, \ldots, t_N, T \}, t \ne \tau\} > 0.
\end{equation}
This form of the kernel is more convenient for our analysis as we can rely on Lemma~3.4.5 from Anderson, Guionnet, and Zeituni~\cite{AndersonGuionnetZeitouni}. From which it follows that we need only prove the convergence of the kernel for fixed~$\tau$ and~$t$, uniformly for~$x \in [a_1,+\infty)$ and~$y \in [a_2,+\infty)$, where~$a_1, a_2 \in \R$ are arbitrary but fixed. 

We note that, instead of kernels, our further argument can be given in terms of trace-class operators and the~$L_2$ norms of the corresponding Hilbert--Schmidt factors in their decomposition. This, however, would have only complicated the matter without much of added value.

Set 
\begin{equation}
	\label{eq:thm22_proof_h}
	h_{\alpha}(\eta; p) = \left( \frac{\GammaF{\eta + \nu}}{\GammaF{\nu}\nu^\eta}\frac{\GammaF{\nu+ \ell}(\nu + \ell)^\eta}{\GammaF{\eta + \nu + \ell}}\right)^{p} e^{\frac{p \ell \eta}{2\nu(\nu + \ell)}} =  e^{p (F_{\nu}(\eta) - F_{\nu + \ell}(\eta))}.
\end{equation}
Then, after straightforward manipulations and by using~\eqref{MeijerGFunction}, the kernel~\eqref{eq:thm22_proof_eq1} becomes
\begin{equation}
	\label{eq:scaled_kern}
	\begin{aligned}
		&\widehat{K}_{\alpha}(\tau, x; t, y) = -\frac{1_{t>\tau}}{2 \pi i} \int \limits_{-\frac{1}{2} - i\infty}^{-\frac{1}{2}+i\infty} h_{\alpha}(\eta; [t \nu]-[\tau \nu]) e^{-x(\eta+T-\kappa-\tau)+y(\eta+T+\kappa-t)} \, d\eta \\
		&+ \int\limits_{S_{\sigma}}\frac{d\sigma}{2 \pi i}\int\limits_{S_{\zeta}^{(n)}}\frac{d\zeta}{2 \pi i}\ \frac{h_{\alpha}(\sigma; [t \nu])}{h_{\alpha}(\zeta; [\tau \nu])} \frac{e^{F_n(-\sigma)}\GammaF{-\zeta}}{e^{F_n(-\zeta)}\GammaF{-\sigma}} \frac{e^{-x(\zeta+T-\kappa-\tau)+y(\sigma+T+\kappa-t)}}{\sigma-\zeta},
	\end{aligned}
\end{equation}
where we write~$1_{t>\tau}$ instead of~$1_{[t \nu]> [\tau \nu]}$, which is justified for~$\alpha$ large enough, and the contours~$S_\sigma$ and~$S_\zeta^{(n)}$ are as in Fig.~\ref{contour2}. 

Observe that, when all the arguments except for~$\sigma$ and~$\zeta$ are fixed, one can write
\begin{equation}
	\left| h_{\alpha}(\sigma, [t \nu]) \frac{e^{F_n(-\sigma)}}{\GammaF{-\sigma}} e^{y\sigma} \right| \le \frac{C}{|\sigma|^{\ell [t \nu]-n}}.
\end{equation}
For~$\alpha$ large enough, $\ell [t \nu]-n > 1$, so we can deform the contour~$S_\sigma$ in the double integral in~\eqref{eq:scaled_kern} into a vertical straight line. For convenience, the new integration contour, together with the old~$S_\zeta^{(n)}$, is presented in Fig.~\ref{contour6}. Additionally, note that, both in the single and the double integral, the vertical line can be shifted arbitrarily, as long as (in the latter case) it does not intersect~$S_\zeta^{(n)}$ and the deformation does not cross the poles of the integrand. For the single integral this follows from the estimate 
\begin{equation}
	\left|h_{\alpha}(\eta; [t \nu]- [\tau \nu])\right| \le \frac{C}{|\eta|^{\ell( [t \nu] - [\tau \nu])}}
\end{equation}
and the fact that~$\ell( [t \nu] - [\tau \nu])>1$ for~$\alpha$ large enough. We will use this in our analysis below.
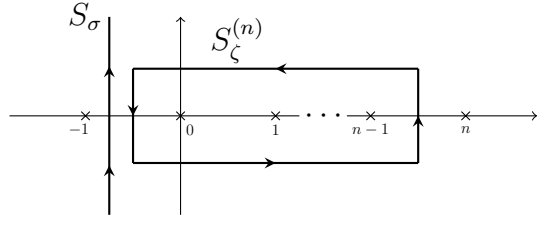
\begin{figure}[ht!]
	\centering
	\begin{tikzpicture}[scale=2.5]
		\begin{scope}[compass style/.style={color=black}, color=black, decoration={markings,mark= at position 0.5 with {\arrow{stealth}}}]

			\draw (-0.9,0) -- (0.625,0);
			\draw[->] (0.875,0) -- (1.875,0);

			\draw (0.765,0) node {$\cdots$};

			\draw (1.5,0) node [cross] {};
			\draw (1,0) node [cross] {};
			\draw (0.5,0) node [cross] {};
			\draw (0,0) node [cross] {};
			\draw (-0.5,0) node [cross] {};

			\node at (1.5,-0.075) {\scalebox{0.5}{$n$}};
			\node at (1.0,-0.075) {\scalebox{0.5}{$n-1$}};
			\node at (0.5,-0.075) {\scalebox{0.5}{$1$}};
			\node at (0.05,-0.075) {\scalebox{0.5}{$0$}};
			\node at (-0.535,-0.075) {\scalebox{0.5}{$-1$}};
			\draw[->] (0,-0.525) -- (0,0.525);
			\draw[thick, postaction=decorate] (-0.25,0.25) -- (-0.25, -0.25);
			\draw[thick, postaction=decorate] (-0.25,-0.25) -- (1.25,-0.25);
			\draw[thick, postaction=decorate] (1.25,-0.25) -- (1.25, 0.25);
			\draw[thick, postaction=decorate] (1.25,0.25) -- (-0.25,0.25);


			\node at (0.3,+0.4) {$S_{\zeta}^{(n)}$};
			\draw[thick, postaction=decorate] (-0.375,-0.525) -- (-0.375,0);
			\draw[thick, postaction=decorate] (-0.375,0) -- (-0.375,0.525);

			\node at (-0.5,0.525) {$S_{\sigma}$};
		\end{scope}
	\end{tikzpicture}  
	\caption{The contour~$S_{\sigma}$ and~$S_{\zeta}^{(n)}$.} 
	\label{contour6}
\end{figure}

Observe
\begin{equation}
\frac{1}{\sigma-\zeta} = -\int \limits_0^{+\infty} e^{u(\sigma - \zeta)}\, du, \quad \zeta \in S_\zeta^{(n)}, \ \sigma \in S_\sigma,
\end{equation}
and rewrite~\eqref{eq:scaled_kern} in the following form,
\begin{equation}
	\widehat{K}_{\alpha}(\tau, x; t, y) = \widehat{K}_{\alpha}^{(0)}(\tau, x; t, y) - \int \limits_0^{+\infty} \phi_{\alpha}(x + u) \psi_\alpha(y+u)\, du,
\end{equation}
where
\begin{align}
	&\widehat{K}_{\alpha}^{(0)}(\tau, x; t, y) = -\frac{1_{t>\tau}}{2 \pi i} \int \limits_{S_\sigma} h_{\alpha}(\sigma; [t \nu]-[\tau \nu]) e^{-x(\sigma+T-\kappa-\tau)+y(\sigma+T+\kappa-t)} \, d\sigma, \\
\label{thm1_proof:eq1}
&\phi_\alpha(x) = \frac{e^{-x(T-\kappa-\tau)}}{2\pi i}\int\limits_{S_{\zeta}^{(n)}} (h_{\alpha}(\zeta, [\tau \nu]))^{-1} \frac{\GammaF{-\zeta}}{e^{F_n(-\zeta)}} e^{-x\zeta}\, d\zeta,\\
	\label{thm1_proof:eq2}
	&\psi_\alpha(y) = \frac{e^{y(T+\kappa-t)}}{2\pi i}\int\limits_{S_{\sigma}} h_{\alpha}(\sigma, [t \nu]) \frac{e^{F_n(-\sigma)}}{\GammaF{-\sigma}} e^{y\sigma}\, d\sigma.
\end{align}

Also, introduce
\begin{equation}
	\begin{aligned}
		\widehat{K}^{(0)}(\tau, x; t, y) &= -\frac{1_{t>\tau}}{2 \pi i} \int \limits_{S_\sigma} e^{\frac{(t-\tau) \sigma^2}{2}-x(\sigma+T-\kappa-\tau)+y(\sigma+T+\kappa-t)} \, d\sigma\\
		&= -\frac{1_{t>\tau}e^{-x(T-\kappa-\tau)+y(T+\kappa-t)}}{\sqrt{2 \pi (t-\tau)}} e^{-\frac{1}{2}\frac{(x-y)^2}{t-\tau}}
	\end{aligned}
\end{equation}
\begin{equation}
	\phi(x) = \frac{e^{-x(T-\kappa-\tau)}}{2\pi i}\int\limits_{S_{\zeta}}  \frac{\GammaF{-\zeta}}{e^{\frac{\tau \zeta^2}{2}}} e^{-x\zeta}\, d\zeta,\quad \psi(y) = \frac{e^{y(T+\kappa-t)}}{2\pi i}\int\limits_{S_{\sigma}}  \frac{e^{\frac{t \sigma^2}{2}}}{\GammaF{-\sigma}} e^{y\sigma}\, d\sigma,
\end{equation}
where the contours~$S_\sigma$ and~$S_\zeta$ are given in Fig.~\ref{contour3}. 

Observe that
\begin{equation}
	e^{-x(T-\kappa-\tau)+y(T+\kappa-t)}K_\Crit(\tau,x; t, y) = \widehat{K}^{(0)}(\tau, x; t, y) - \int \limits_0^{+\infty} \phi(x + u) \psi(y+u)\, du,
\end{equation}
where~$K_\Crit(\tau,x; t, y)$ is the extended critical kernel from~\eqref{eq:lim_kern} and the exponential gauge factor does not change the corresponding Fredholm determinant.

Na\"{i}vely, in view of~\eqref{cor2:eq1}, we expect that  
\begin{equation}
	\label{eq:thm24_proof_eq2}
	\begin{aligned}
		&\widehat{K}_{\alpha}^{(0)} (\tau,x; t,y) \to \widehat{K}^{(0)} (\tau,x; t,y),\\
		&\phi_{\alpha}(x) \to \phi(x),\quad \psi_{\alpha}(y) \to \psi(y),
	\end{aligned}
\end{equation}
as~$\alpha \to \infty$. Thus, by Proposition~\ref{PropositionMeijFirth} with~$I = [0,+\infty]$ and~$g(u,x) = x + u$, we have 
\begin{equation}
	\label{eq:thm24_proof_eq3}
	\widehat{K}_{\alpha}(\tau,x; t,y) \limalptoinf e^{-x(T-\kappa-\tau)+y(T+\kappa-t)}{K}_{\Crit} (\tau,x; t,y).
\end{equation}
Moreover, if the convergence in~\eqref{eq:thm24_proof_eq2} is uniform for~$x \in [a_1,+\infty)$ and~$y \in [a_2, +\infty)$, $a_1, a_2 \in \R$, Proposition~\ref{PropositionMeijFirth} will guarantee the uniform convergence in~\eqref{eq:thm24_proof_eq3}, which is what we seek.
\\[1ex]
\noindent \textbf{a)} We start by proving~$\psi_\alpha(y) \to \psi(y)$ uniformly for~$y \in [a_2,+\infty)$. First, deform the contour~$S_\sigma$ into~$-T + i \mathbb{R}$. The choice of~$\kappa$ in~\eqref{eq:thm22_proof_kappa} and simple bounds for the integral imply
\begin{equation}
	\label{thm1_proofa:eq1}
	\sup\limits_{y \in [a_2,+\infty)}|\psi_\alpha(y)- \psi(y)| \le C \int\limits_{S_\sigma} \left|h_{\alpha}(\sigma, [t \nu]) e^{F_n(-\sigma)}- e^{\frac{t \sigma^2}{2}}\right| \, \frac{|d\sigma|}{|\GammaF{-\sigma}|},
\end{equation}
where~$C>0$ is a constant. We will show that the right-hand side converges to zero by splitting~$S_\sigma$ into several pieces and analyzing them separately. Our analysis will heavily rely on Lemma~\ref{lemma44}.
\begin{enumerate}
	\item Suppose $|\im{\sigma}| \le \nu^{1-\delta}$, $\sigma \in S_\sigma$. We already know that due to~\eqref{cor2:eq1} the integrand in~\eqref{thm1_proofa:eq1} converges to zero pointwise. Thus, it is enough to establish an integrable bound and leverage the dominated convergence theorem.

		Recall~\eqref{eq:thm22_proof_h}, and then use~\eqref{cor2:eq2} from Lemma~\ref{lemma44} for~$F_\nu(\sigma)$ and~$F_{\nu + \ell}(\sigma)$, Lemma~\ref{lemma46} for~$F_n(-\sigma)$, and~\eqref{eq:app3} from Proposition~\ref{prop:A1} for~$\GammaF{-\sigma}$ to find that 
\begin{equation}
	\label{eq:thm22_proof_eq4}
	\left|h_{\alpha}(\sigma, [t \nu]) \frac{e^{F_n(-\sigma)}}{\GammaF{-\sigma}}\right| \le C e^{-\left(\beta_1 - \frac{\beta_2 \nu}{\nu + \ell}\right)|\im{\sigma}|^2 + \frac{\pi}{2}|\im{\sigma}|} \le \widetilde{C} e^{-\beta |\im{\sigma}|^2 + \frac{\pi}{2}|\im{\sigma}|}.
\end{equation}
The triangle inequality, the fact that the term with~$\sigma^2$ in~\eqref{thm1_proofa:eq1} is integrable, together with~\eqref{eq:thm22_proof_eq4}, supply the needed integrable bound and conclude this case.

\item Suppose that $\nu^{1+\delta} \le |\im{\sigma}| \le (\nu+\ell)^{1-\delta}$, $\sigma \in S_\sigma$.
	Since~$\nu \ll \ell$, we can always choose~$\delta>0$ small enough so that~$\nu^{1+\delta} < (\nu + \ell)^{1-\delta}$. Thus the inequality above does not collapse. 

	In this case, unlike that above, we will use~\eqref{cor2:eq4} for~$F_\nu(\sigma)$. For sufficiently large~$\alpha$, we find that 
\begin{equation}
	\left|h_{\alpha}(\sigma, [t \nu]) \frac{e^{F_n(-\sigma)}}{\GammaF{-\sigma}}\right| \le C e^{- \beta \nu |\im{\sigma}|\left(1 - \frac{|\im{\sigma}|}{\nu + \ell}\right)}e^{\frac{\pi|\im{\sigma}|}{2}} \le e^{-|\im{\sigma}|},
\end{equation}
which is again enough to apply the dominated convergence theorem.

	\item Suppose that~$ |\im{\sigma}| \ge (\nu + \ell)^{1+\delta}$, $\sigma \in S_\sigma$. Now, we cannot use~\eqref{cor2:eq2} for either~$F_{\nu + \ell}(\sigma)$ or~$F_{\nu}(\sigma)$. Moreover, \eqref{cor2:eq3}, together with Lemma~\ref{lemma46}, turns out to be insufficient. Nonetheless, we can still analyze this case by using a more direct calculation, instead of relying on the dominated convergence theorem. The goal is to show that
		\begin{equation}
			I_\alpha \df \int \limits_{|\im{\sigma}| \ge (\nu + \ell)^{1+\delta}} \left|h_{\alpha}(\sigma, [t \nu]) \frac{e^{F_n(-\sigma)}}{\GammaF{-\sigma}}\right| \, |d\sigma|
		\end{equation}
		vanishes in the limit~$\alpha \to \infty$. Use~\eqref{cor2:eq5} and~\eqref{cor2:eq6} from Lemma~\ref{lemma44} to write
\begin{equation}
	\left|h_{\alpha}(\sigma, [t \nu]) \frac{e^{F_n(-\sigma)}}{\GammaF{-\sigma}}\right| \le \myexp{-\nu \ell \left( \left(\beta-\frac{n}{\nu \ell}\right)\log{|\im{\sigma}|} - \beta \frac{\nu + \ell}{\ell} \log{(\nu + \ell)}\right)},
\end{equation}
for some~$\beta>0$.

Evaluating the integral, we have
\begin{equation}
	I_\alpha = 2 (\nu + \ell)^{\beta \nu (\nu + \ell)}\int \limits_{(\nu + \ell)^{1+\delta}}^{+\infty} \frac{dw}{w^{\beta \nu \ell - n}} =  \frac{2 (\nu + \ell)^{\beta \nu (\nu + \ell)}}{(\beta \nu \ell - n- 1)  (\nu + \ell)^{(1+\delta)(\beta \nu \ell - n-1)}}.
\end{equation}
The behavior of this expression depends on
\begin{equation}
	-(1+\delta)(\beta \nu \ell - n - 1) + \beta \nu (\nu + \ell) = - (\beta \nu \ell - n-1) \left(1 + \delta - \frac{\beta \nu (\nu + \ell)}{\beta \nu \ell -n-1}\right),
\end{equation}
which becomes negative as~$\alpha \to \infty$ since~$n \ll \nu \ll \ell$. This implies~$I_\alpha \limalptoinf 0$.

\item Suppose that~$\nu^{1-\delta} \le |\im{\sigma}| \le \nu^{1+\delta}$, $\sigma \in S_\sigma$. We will show that
\begin{equation}
	\label{thm1_proofa:eq2}
	\int \limits_{\nu^{1-\delta} \le |\im{\sigma}| \le \nu^{1+\delta}}  \left|h_{\alpha}(\sigma, [t \nu]) \frac{e^{F_n(-\sigma)}}{\GammaF{-\sigma}}\right|\, |d\sigma| \limalptoinf 0.
\end{equation}

Use Lemma~\ref{lemma45} and Lemma~\ref{lemma46} to find that
\begin{equation}
		A(\sigma) \df \left|h_{\alpha}(\sigma, [t \nu]) \frac{e^{F_n(-\sigma)}}{\GammaF{-\sigma}}\right|\le C \myexp{\frac{\pi |\im{\sigma}|}{2}-\beta \nu |\im{\sigma}| \int \limits_{\frac{|\im{\sigma}|}{\nu + \ell + \re{\sigma}}}^{\frac{|\im{\sigma}|}{\nu+\re{\sigma}}} \frac{d\theta}{1+\theta^2}}.
\end{equation}
Bounding~$\frac{1}{1+\theta^2}$ from below, one obtains
\begin{equation}
	A(\sigma) \le C \myexp{\frac{\pi |\im{\sigma}|}{2}- \frac{\beta \nu \ell |\im{\sigma}|^2}{(\nu + \ell +\re{\sigma})(\nu +\re{\sigma})} \frac{1}{1+{\frac{|\im{\sigma}|^2}{(\nu + \re{\sigma})^2}}}}. 
\end{equation}
Recalling that~$\nu^{1-\delta} \le |\im{\sigma}| \le \nu^{1+\delta}$, we arrive at
\begin{equation}
	A(\sigma) \le C \myexp{-\tilde{\beta} \nu^{1- 3\delta} |\im{\sigma}|} \le e^{-|\im{\sigma}|},
\end{equation}
where we chose~$\delta < \frac{1}{3}$ and $\alpha$ sufficiently large. This implies~\eqref{thm1_proofa:eq2}.

\item Suppose that~$(\nu + \ell)^{1-\delta} \le |\im{\sigma}| \le (\nu + \ell)^{1+\delta}$, $\sigma \in S_\sigma$. Now, we will show that
\begin{equation}
	I_\alpha \df \int \limits_{(\nu+\ell)^{1-\delta} \le |\im{\sigma}| \le (\nu+\ell)^{1+\delta}}  \left|h_{\alpha}(\sigma, [t \nu]) \frac{e^{F_n(-\sigma)}}{\GammaF{-\sigma}}\right|\, |d\sigma| \limalptoinf 0.
\end{equation}
To estimate~$F_{\nu}(\sigma)-F_{\nu+\ell}(\sigma)$ we again apply Lemma~\ref{lemma45}. Since~$n \ll \nu$, we can choose $\delta>0$ small enough so that~$n^{1+\delta} \le \nu^{1-\delta} \le (\nu + \ell)^{1-\delta}$. This enables us to use~\eqref{cor2:eq6} from Lemma~\ref{lemma44} for~$F_n(-\sigma)$. Proceed by writing

\begin{equation}
		\left|h_{\alpha}(\sigma, [t \nu]) \frac{e^{F_n(-\sigma)}}{\GammaF{-\sigma}}\right| \le \myexp{(n + \re{\sigma})\log{|\im{\sigma}|} -\beta \nu |\im{\sigma}| \int \limits_{\frac{\nu+\re{\sigma}}{|\im{\sigma}|}}^{\frac{\nu + \ell+\re{\sigma}}{|\im{\sigma}|}} \frac{d \theta}{1+\theta^2}}.
\end{equation}
Bounding~$\frac{1}{1+\theta^2}$ from below, we get
\begin{equation}
	\left|h_{\alpha}(\sigma, [t \nu]) \frac{e^{F_n(-\sigma)}}{\GammaF{-\sigma}}\right|  \le \myexp{(n + \re{\sigma})\log{|\im{\sigma}|}-\frac{\beta \nu \ell}{1+\frac{(\nu + \ell +\re{\sigma})^2}{|\im{\sigma}|^2}}}.
\end{equation}
Then, $(\nu + \ell)^{1-\delta} \le |\im{\sigma}| \le (\nu + \ell)^{1+\delta}$ gives us 
\begin{equation}
	\left|h_{\alpha}(\sigma, [t \nu]) \frac{e^{F_n(-\sigma)}}{\GammaF{-\sigma}}\right| \le e^{\beta_1 n \log{(\nu + \ell)}-\beta_2 \nu \ell^{1-2\delta}},
\end{equation}
if we choose~$\delta < \frac{1}{2}$ and $\alpha$ sufficiently large. By integrating this bound, we find that
\begin{equation}
	I_\alpha \le e^{(1+\delta+\beta_1 n) \log{(\nu + \ell)}-\beta_2 \nu \ell^{1-2\delta}} \limalptoinf 0
\end{equation}
since~$\nu \ell^{1 -2\delta}$ dominates.
\end{enumerate}

This finishes the proof of the claim for~$\psi_\alpha$.
\\[1ex]
\noindent \textbf{b)} Next step is to prove that~$\phi_\alpha(x) \to \phi(x)$ uniformly for~$x \in [a_1,+\infty)$. Deform the contour~$S_\zeta^{(n)}$ in such a way that~$\re{\zeta}\ge -\epsilon$, for some small~$\epsilon \in (0, \kappa)$. For convenience, set
\begin{equation}
	\widetilde{\phi}_{\alpha}(x) = \frac{e^{-x(T-\kappa-\tau)}}{2\pi i}\int\limits_{S_{\zeta}^{(n)}}  \frac{\GammaF{-\zeta}}{e^{\frac{\tau \zeta^2}{2}}} e^{-x\zeta}\, d\zeta.
\end{equation}

Then, we can write
\begin{equation}
	\sup\limits_{x \in [a_1,+\infty)}|\phi_\alpha(x) -\phi(x)| \le	\sup\limits_{x \in [a_1,+\infty)}|\phi_\alpha(x) -\widetilde{\phi}_\alpha(x)|  +  \sup\limits_{x \in [a_1,+\infty)}|\widetilde{\phi}_\alpha(x) - \phi(x)|.
\end{equation}
Because of the choice of~$\kappa$ and~$\epsilon$, and due to the fast decay of~$e^{-\frac{\tau \zeta^2}{2}} \GammaF{-\zeta}$, we see that 
\begin{equation}
	\sup\limits_{x \in [a_1,+\infty)}|\phi_\alpha(x) -\widetilde{\phi}_\alpha(x)| \limalptoinf 0.
\end{equation}
Hence, it is left to study the other term,

\begin{equation}
	\sup\limits_{x \in [a_1,+\infty)}|\widetilde{\phi}_\alpha(x) - \phi(x)| \le C \int\limits_{C_\zeta^{(n)}} \left|(h_{\alpha}(\zeta, [\tau \nu]))^{-1} e^{-F_n(- \zeta)} - e^{-\frac{\tau \zeta^2}{2}} \right|\cdot |\GammaF{-\zeta}|\, \cdot |d\zeta|. 
\end{equation}
We already know that, by~\eqref{cor2:eq1}, the integrand converges to zero pointwise. Again, we will rely on the dominant convergence theorem, for which it suffices to study
\begin{equation}
	A(\zeta) \df \left|(h_{\alpha}(\zeta, [\tau \nu]))^{-1}  \frac{\GammaF{-\zeta}}{e^{F_n(- \zeta)}} \right| = e^{-[\tau \nu] \re{(F_\nu(\zeta) - F_{\nu + \ell}(\zeta))}} \frac{|\GammaF{-\zeta}|}{e^{\re{F_n(-\zeta)}}}.
\end{equation}
Since~$n \ll \nu$, we can choose~$\delta>0$ small enough so that~$n \le \nu^{1-\delta} \le (\nu + \ell)^{1- \delta}$. Therefore, since~$|\re{\zeta}| \le n$, we can use~\eqref{cor2:eq3} from Lemma~\ref{lemma44}, together with Lemma~\ref{lemma47}, to obtain
\begin{equation}
	A(\zeta) \le C e^{-\beta \left(1-\frac{\nu}{\nu + \ell}\right)|\re{\zeta}|^2} \GammaF{- \zeta} \le C e^{-\beta |\re{\zeta}|^2} \GammaF{-\zeta},
\end{equation}
which holds for~$\alpha$ large enough. This supplies the sought integrable majorant and finishes the proof the claim.
\\[1ex]
\noindent \textbf{c)} The final goal is to establish the convergence 
\begin{equation}
	\sup\limits_{\substack{x \in [a_1,+\infty)\\y \in [a_2,+\infty)}}|\widehat{K}_{\alpha}^{(0)} (\tau,x; t,y) - \widehat{K}^{(0)} (\tau,x; t,y)| \limalptoinf 0.
\end{equation}

Set
\begin{equation}
	\mu = \frac{t + \tau}{2}.
\end{equation}
and deform the contour~$S_\sigma$ into~$-T + \mu + i \mathbb{R}$. This enables us to derive the following estimate,
\begin{equation}
	\label{eq:thm22_proof_eq_singint}
	\sup\limits_{\substack{x \in [a_1,+\infty)\\y \in [a_2,+\infty)}}|\widehat{K}_{\alpha}^{(0)} (\tau,x; t,y) - \widehat{K}^{(0)} (\tau,x; t,y)| \le C \int \limits_{S_\sigma} \left|h_\alpha(\sigma;[t\nu]-[\tau \nu]) - e^{\frac{(t - \tau)\sigma^2}{2}}\right|\, |d \sigma|
\end{equation}
because
\begin{equation}
	|e^{-x(\sigma + T -\kappa-\tau)+y(\sigma+T+\kappa-t)}|\Big|_{\sigma \in S_\sigma} = e^{-x(\mu-\kappa-\tau)+y(\mu+\kappa-t)} = e^{-(x+y)(\frac{t - \tau}{2}-\kappa)} \le 1.
\end{equation}
The pointwise convergence of the integrand follows from~\eqref{cor2:eq1}, and we can again take advantage of the dominated convergence theorem. The existence of the integrable majorant follows from the observation that the right-hand side of~\eqref{eq:thm22_proof_eq_singint} is similar to that of~\eqref{thm1_proofa:eq1}. In fact, the latter expression is more complicated because of the presence of~$\frac{F_n(-\sigma)}{\GammaF{-\sigma}}$. This means that all the estimates in~\textbf{a)} are going to apply to the present situation simply by erasing~$\frac{F_n(-\sigma)}{\GammaF{-\sigma}}$, which only makes the bounds stronger. This finishes the proof of the claim and concludes the proof of Theorem~\ref{thm22}.

\subsection{Proof of Theorem~\ref{thm23}}
\label{sect_43}

The proof follows the line of argument in Kuijlaars and Zhang~\cite[Proposition 5.3]{KuijlaarsZhang}. We will only give a short reminder, emphasizing important details.

  After applying the scaling, the first term in~\eqref{TimeDependentGinibreKernel} becomes independent of~$n$. Hence, we will only analyze the second term. The complement formula for the gamma function implies 
\begin{equation}
	\label{eq:gamma_complement}
	\frac{\GammaF{\sigma+1} \GammaF{\zeta-n+1}}{\GammaF{\zeta+1}\GammaF{\sigma-n+1}} n^{\sigma - \zeta}= n^{\sigma - \zeta}\frac{\GammaF{n-\sigma} \GammaF{-\zeta}}{\GammaF{n-\zeta}\GammaF{-\sigma}} = e^{F_n(-\sigma) - F_n(-\zeta) + \frac{\sigma-\zeta}{2n}} \frac{\GammaF{-\zeta}}{\GammaF{-\sigma}}.
\end{equation}

The known asymptotics~\eqref{cor2:eq1} of the gamma function yields 
\begin{equation}
	e^{F_n(-\sigma) - F_n(-\zeta) + \frac{\sigma-\zeta}{2n}} \to 1,
\end{equation}
as~$n \to \infty$, pointwise. Hence, the claim will follow as soon as we establish that the limit can be exchanged with the double integral. 

To do so, we will use the dominated convergence theorem, and find an integrable bound. We recall that the~$\zeta$ integral in~\eqref{TimeDependentGinibreKernel} is over the contour~$S_\zeta^{(n)}$ in Fig.~\ref{contour2}. Thus, we can apply Lemma~\ref{lemma47} to bound~$e^{-F_n(-\zeta)}$.
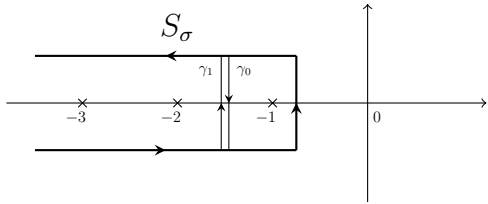
\begin{figure}[ht!]
	\centering
	\begin{tikzpicture}[scale=2.5]
		\begin{scope}[compass style/.style={color=black}, color=black, decoration={markings,mark= at position 0.5 with {\arrow{stealth}}}]

			\draw[->] (-1.9,0) -- (0.625,0);

			\draw (-0.5,0) node [cross] {};
			\draw (-1,0) node [cross] {};
			\draw (-1.5,0) node [cross] {};

			\node at (0.05,-0.075) {\scalebox{0.5}{$0$}};
			\node at (-0.535,-0.075) {\scalebox{0.5}{$-1$}};
			\node at (-1.035,-0.075) {\scalebox{0.5}{$-2$}};
			\node at (-1.535,-0.075) {\scalebox{0.5}{$-3$}};

			\draw[->] (0,-0.525) -- (0,0.525);
			\draw[postaction=decorate] (-0.77,-0.25) -- (-0.77,0.25);
			\draw[postaction=decorate] (-0.73,0.25) -- (-0.73,-0.25);
			\draw[thick, postaction=decorate] (-0.375,-0.25) -- (-0.375,0.25);
			\draw[thick, postaction=decorate] (-0.375,0.25) -- (-1.75,0.25);
			\draw[thick, postaction=decorate] (-1.75,-0.25) -- (-0.375,-0.25);

			\node at (-1,+0.4) {$S_{\sigma}$};

			\node at (-0.65,+0.17) {\scalebox{0.5}{$\gamma_0$}};
			\node at (-0.85,+0.17) {\scalebox{0.5}{$\gamma_1$}};
		\end{scope}
	\end{tikzpicture}  
	\caption{The contours~$S_{\sigma}= \gamma_0 \cup \gamma_1$ and~$S_{\zeta}^{(n)}$.} 
	\label{contour5}
\end{figure}

Next, split the contour~$S_\sigma$ in two subcontours~$\gamma_0$ and~$\gamma_1$ as indicated in Fig.~\ref{contour5}. Due to Lemma~\ref{lemma1}, we find that~$F_n(-\sigma)$ converges to zero uniformly on~$\gamma_0$ and thus is uniformly bounded. Then, certainly~$\frac{e^{F_n(-\sigma)}}{\GammaF{-\sigma}}$ is uniformly bounded on~$\gamma_0$, and the same holds on~$\gamma_1$ due to Lemma~\ref{lemma48}. The last piece of the argument is to observe that~$|x^\zeta y^{-\sigma-1}| \le e^{\alpha (|\re{\zeta}| + |\re{\sigma}|)}$ uniformly for~$x$ and~$y$ in compact subsets of~$\Rpl$, where~$\alpha>0$ is independent of either~$\zeta$ or~$\sigma$. The integrability of the majorant then follows from the fast convergence to zero of
\begin{equation}
	\frac{\prod \limits_{j=1}^{r} \GammaF{\sigma+\nu_j}}{\prod\limits_{j=1}^{q} \GammaF{\zeta+\nu_j}}
\end{equation}
as~$\sigma \to \infty$ or~$\zeta \to \infty$ along the corresponding contours, $S_\sigma$ or~$S_\zeta$. This concludes the proof.

 \subsection{Proof of Theorem~\ref{thm24}}
 \label{sect44}
 For the sake of readability, further we are going to omit the argument~$\alpha$ in~$n(\alpha)$ and~$\ell_j(\alpha)$.

 Set~$\widehat{K}_{\alpha}(q,x; r,y)$ to be the left-hand side of~\eqref{eq:thm_hard_eq1}. Recalling~\eqref{MeijerGFunction} and \eqref{eq:gamma_complement} together with a simple identity
 \begin{equation}
	 \frac{1}{\sigma - \zeta} = -\int \limits_0^1 u^{\zeta - \sigma - 1} \, du
 \end{equation}
 allows us to write
 \begin{equation}
 	\widehat{K}_{\alpha}(q,x; r,y)= \widehat{K}_{\alpha}^{(0)} (q,x; r,y)- \int \limits_0^1 \phi_\alpha (u x ) \psi_\alpha(u y) \, du,
 \end{equation}
where
\begin{align}
	\label{eq:4.63}
	&\widehat{K}_{\alpha}^{(0)} (q,x; r,y) =-\frac{1_{r>q}}{2\pi i y} \int \limits_{S_\sigma} \prod_{j=q+1}^r \frac{\GammaF{\nu_j + \ell_j} (\nu_j + \ell_j)^\sigma}{\GammaF{\nu_j + \ell_j + \sigma}} \prod\limits_{j=q+1}^r \GammaF{\nu_j + \sigma} \left(\frac{x}{y}\right)^\sigma d\sigma,\\
	\label{eq:4.64}
	&\phi_{\alpha}(x) = \frac{1}{2\pi i} \int \limits_{S_\zeta^{(n)}}  \prod_{j=1}^q \frac{\GammaF{\nu_j + \ell_j + \zeta}}{\GammaF{\nu_j + \ell_j} (\nu_j + \ell_j)^\zeta} \frac{\GammaF{-\zeta}\GammaF{n}}{\GammaF{n- \zeta} n^\zeta} \frac{x^{\zeta} d\zeta}{\prod\limits_{j=1}^q \GammaF{\nu_j + \zeta}},\\
	\label{eq:4.65}
	&\psi_{\alpha}(y) = \frac{1}{2\pi i} \int \limits_{S_\sigma}\prod_{j=1}^r \frac{\GammaF{\nu_j + \ell_j} (\nu_j + \ell_j)^\sigma}{\GammaF{\nu_j + \ell_j + \sigma}} \frac{\GammaF{n-\sigma} n^\sigma}{\GammaF{n}\GammaF{-\sigma}} \prod\limits_{j=1}^r \GammaF{\nu_j + \sigma} y^{-\sigma - 1} d\sigma,
\end{align}
and the contours~$S_\sigma$ and~$S_\zeta^{(n)}$ are given in Fig.~\ref{contour2}. 
Introduce
\begin{equation}
	\widehat{K}^{(0)} (q,x; r,y) =-\frac{1_{r>q}}{2\pi i y} \int \limits_{S_\sigma} \prod\limits_{j=q+1}^r \GammaF{\nu_j + \sigma} \left(\frac{x}{y}\right)^\sigma d\sigma,
\end{equation}
\begin{equation}
	\phi(x) = \frac{1}{2\pi i} \int \limits_{S_\zeta}  \frac{\GammaF{-\zeta}x^{\zeta} d\zeta}{\prod\limits_{j=1}^q \GammaF{\nu_j + \zeta}}, \quad \psi(y) = \frac{1}{2\pi i} \int \limits_{S_\sigma} \prod\limits_{j=1}^r \GammaF{\nu_j + \sigma}\frac{y^{-\sigma-1} d\sigma}{\GammaF{-\sigma}},
\end{equation}
where the integration contours are defined in Fig.~\ref{contour4}, and observe that
\begin{equation}
	K^{\Hardedge}_{\vec{\nu}}(q,x;r,y) = \widehat{K}^{(0)} (q,x; r,y) + \int\limits_0^1 \phi(ux) \psi(uy)\, du.
\end{equation}

Na\"{i}vely, in view of~\eqref{cor2:eq1}, we expect that  
\begin{equation}
	\label{eq:K0conv}
	\begin{aligned}
		&\widehat{K}_{\alpha}^{(0)} (q,x; r,y) \to \widehat{K}^{(0)} (q,x; r,y),\\
		&\phi_{\alpha}(x) \to \phi(x),\quad \psi_{\alpha}(y) \to \psi(y),
	\end{aligned}
\end{equation}
as~$\alpha \to \infty$, uniformly for~$x,y$ in compact subsets of~$\Rpl$. Thus, by Proposition~\ref{PropositionMeijFirth} with~$I = [0,1]$ and~$g(u,x) = ux$, 
\begin{equation}
	\widehat{K}_{\alpha}(q,x; r,y) \limalptoinf {K}^{\Hardedge}_{\vec{\nu}} (q,x; r,y)
\end{equation}
uniformly on compacts.

Again, we need only show that it is permissible to exchange the limits and the integrals for~\eqref{eq:4.63} -- \eqref{eq:4.65} and that the convergence is uniform.
\\[1ex]
\noindent \textbf{a)} We start with~$\widehat{K}_{\alpha}^{(0)} (q,x; r,y)$.

Since~$q$ and~$r$ are fixed, we can assume without loss of generality that
\begin{equation}
	N_1 \df \nu_{q+1}+\ell_{q+1} \le \ldots \le N_k \df \nu_{r} + \ell_{r}, \quad k = r-q.
\end{equation}

Let~$S_0$ be the part of the contour~$S_\sigma$ with~$-1 \le \re{\sigma} \le 0$. On~$S_0$, the integrand converges uniformly, and thus it suffices to take care of the horizontal pieces of the contour~$S_\sigma$. Decompose the negative half-axis in the following manner
\begin{equation}
	(-\infty,0)=(-N_1,0] \cup (-N_2,-N_1] \cup \ldots \cup (-N_{k},-N_{k-1}] \cup (-\infty, -N_k],
\end{equation}
and let~$S_1$, $S_2,\ldots, S_k$, and $S_{k+1}$ be the corresponding pieces of $S_\sigma$. For convenience, set
\begin{equation}
	\begin{aligned}
		I_j= \left|\int \limits_{S_j} \prod_{p=q+1}^r \frac{\GammaF{\nu_p + \ell_p} (\nu_p + \ell_p)^\sigma}{\GammaF{\nu_p + \ell_p + \sigma}} \prod\limits_{p=q+1}^r \GammaF{\nu_p + \sigma} \left(\frac{x}{y}\right)^\sigma d\sigma\right|, \quad j = 1, \ldots, k+1. 
	\end{aligned}
\end{equation}

Consider~$S_1$. A simple change of variables shows that Lemma~\ref{lemma47} can be applied, so one gets
\begin{equation}
	\left|\prod\limits_{j=q+1}^r \frac{\GammaF{\nu_j + \ell_j} (\nu_j + \ell_j)^\sigma}{\GammaF{\nu_j + \ell_j + \sigma} }\right| \le C.
\end{equation}
The dominated convergence theorem then shows that
\begin{equation}
	\label{Meij3}
	-\frac{1_{r>q}}{2\pi i y} \int \limits_{S_0 \cup S_1} \prod_{j=q+1}^r \frac{\GammaF{\nu_j + \ell_j} (\nu_j + \ell_j)^\sigma}{\GammaF{\nu_j + \ell_j + \sigma}} \prod\limits_{j=q+1}^r \GammaF{\nu_j + \sigma} \left(\frac{x}{y}\right)^\sigma d\sigma
\end{equation}
converges to~$\widehat{K}^{(0)} (q,x; r,y)$, uniformly for~$x$ and~$y$ in compact subsets of~$\Rpl$.

We need to show that the remaining integrals vanish. Consider~$S_{j}$, $2 \le j \le k$. We will use Lemma~\ref{lemma49} and Lemma~\ref{lemma47}, after a simple change of variables, to obtain
\begin{equation}
	\begin{aligned}
		&\left|\frac{\GammaF{\nu_p + \ell_p}\GammaF{\nu_p+\sigma}}{\GammaF{\nu_p + \ell_p + \sigma}}\right| \le C e^{\alpha \ell_p}, &&q + 1 \le p \le q + j-1, \\
		&\left|\frac{\GammaF{\nu_p+\ell_p}(\nu_p+\ell_p)^\sigma}{\GammaF{\nu_p + \ell_p + \sigma}}\right| \le C, &&q+ j \le p \le r.
	\end{aligned}
\end{equation}

Thus,
\begin{equation}
I_j \le C_1 e^{\alpha \sum \limits_{p= q+1}^{q+j-1} \ell_p} \int\limits_{N_{j-1}}^{N_j} \left(\prod\limits_{p=1}^{j-1} N_p\right)^{-s} \prod\limits_{p=q+j}^r \left|\GammaF{\nu_p - s + i \delta}\right| \left(\frac{y}{x}\right)^s ds.
\end{equation}

For~$s \in [N_{j-1},N_j]=[\nu_{q+j-1} + \ell_{q+j-1}, \nu_{q+j}+\ell_{q+j}]$ and~$p \ge q+j$, due to the Euler complement formula, elementary estimates from Proposition~\ref{prop:A1}, and monotonicity of the gamma-function with the large arguments, we have
\begin{equation}
	\left|\Gamma\left(\nu_p-s+i\delta\right)\right| \le \frac{C_2}{\GammaF{s-\nu_p+1}} \le \frac{C_2}{\GammaF{\nu_{q+j-1} + \ell_{q+j-1}-\nu_p}}.
\end{equation}
Hence, 
\begin{equation}
	I_j \le C   \frac{e^{\alpha \sum \limits_{p= q+1}^{q+j-1} \ell_p}}{\prod\limits_{p = q+j}^r\GammaF{N_{j-1}-\nu_p}}  \frac{e^{-N_{j-1}\left(\log(N_1\cdots N_{j-1}x)-\log{y}\right)}}{\log(N_1\cdots N_{j-1}x)-\log{y}}.
\end{equation}
Since there is at least one gamma-function in the denominator, it dominates the exponent in the enumerator, and the latter expression goes to zero as $N_1\rightarrow\infty$, $\ldots$, $N_{j-1}\rightarrow\infty$. 

The last step is to study~$I_{k+1}$. Use Lemma~\ref{lemma49} to find that
\begin{equation}
	I_{k+1} \le C  e^{\alpha \sum \limits_{p= q+1}^{r} \ell_p} \times \frac{e^{-N_{k}(\log\left(N_1\ldots N_{k}x\right) - \log{y})}}{\log\left(N_1\cdots N_{k}x\right)-\log{y}}. 
\end{equation}
This time the latter expression goes to zero as $N_1\rightarrow\infty$, $\ldots$, $N_{k}\rightarrow\infty$ because the exponent in the fraction dominates the other exponent.

Note that in all cases the converges is uniform in~$x$ and~$y$, as long as they are positive and separated from zero or infinity. The first line in~\eqref{eq:K0conv} is established.
\\[1ex]
\noindent \textbf{b)} The proof of~$\psi_{\alpha}(y) \to \psi(y)$ copies that of~$a)$ almost word for word. Indeed, observe the similarity between~\eqref{eq:4.63} and~\eqref{eq:4.65}, and we need only provide an extra bound for
\begin{equation}
	\frac{\GammaF{n-\sigma} n^\sigma}{ \GammaF{n}\GammaF{-\sigma}},
\end{equation}
which follows from Lemma~\ref{lemma48} if~$\re{\sigma} \le -1$ and from the uniform convergence for the rest of the contour.

The last step is to prove~$\phi_{\alpha}(x) \to \phi(x)$, with~$\phi_\alpha$ from~\eqref{eq:4.64}. Note that we can write
\begin{equation}
	\phi_{\alpha}(x) = \frac{1}{2\pi i} \int \limits_{S_\zeta^{(n)}} e^{-F_n(-\zeta) + \sum\limits_{j=1}^q F_{\nu_j + \ell_j}(\zeta) + \frac{\zeta}{2}\left(\frac{1}{n}-\sum\limits_{j=1}^q \frac{1}{\nu_j + \ell_j}\right)} \frac{\GammaF{-\zeta}x^{\zeta} d\zeta}{\prod\limits_{j=1}^q \GammaF{\nu_j + \zeta}},
\end{equation}
where we recall~$F_a(z)$ is defined in~\eqref{eq:def_Fa}. 

Since~$n \ll \ell_j$ for all~$j \in \N$, we can choose~$\delta \in (0,1)$ so that~$n \le (\nu + \ell_j)^{1- \delta}$. For~$\zeta \in S_\zeta^{(n)}$ one has~$|\re{\zeta}| \le n$, and we can use~\eqref{cor2:eq3}. This together with~\eqref{lemma5:eq1} yields
\begin{equation}
	\left|e^{-F_n(-\zeta) + \sum\limits_{j=1}^q F_{\nu_j + \ell_j}(\zeta) + \frac{\zeta}{2}\left(\frac{1}{n}-\sum\limits_{j=1}^q \frac{1}{\nu_j + \ell_j}\right)}\right| \le \widetilde{C} e^{|\re{\zeta}|^2\sum \limits_{j=1}^q \frac{\alpha}{\nu_j+\ell_j}} \le C e^{|\re{\zeta}|},
\end{equation}
where and~$n$ and the~$\ell_j$ are large enough. The dominated convergence theorem can be applied, and the proof is concluded.

\subsection{Proof of Theorem~\ref{thm25}}
 \label{sect_45}

 First, recall~\eqref{eq:thm22_proof_h}, and introduce a similar (but simpler) expression
\begin{equation}
	\widetilde{h}_{\nu}(\eta; p) = \left( \frac{\GammaF{\eta + \nu}}{\GammaF{\nu}\nu^\eta}\right)^{p} e^{\frac{p \eta}{2\nu}} =  e^{p F_{\nu}(\eta)},
\end{equation}
which obtained from~\eqref{eq:thm22_proof_h} by erasing the dependence on~$\ell$ and~$n$. The kernel~$\widehat{K}_\nu(\tau,x; t, y)$ in~\eqref{eq:thm25_eq1} takes the form
\begin{equation}
	\label{eq:thm25_proof_eq1}
	\begin{aligned}
		&\widehat{K}_{\nu}(\tau, x; t, y) = -\frac{1_{t>\tau}}{2 \pi i} \int \limits_{S_\sigma} h_{\nu}(\eta; [t \nu]-[\tau \nu]) e^{-(x-y)\eta} \, d\eta \\
		&+ \int\limits_{S_{\sigma}}\frac{d\sigma}{2 \pi i}\int\limits_{S_{\zeta}} \frac{d\zeta}{2 \pi i}\ \frac{h_{\nu}(\sigma; [t \nu])}{h_{\nu}(\zeta; [\tau \nu])} \frac{\GammaF{-\zeta}}{\GammaF{-\sigma}} \frac{e^{-x\zeta+y\sigma}}{\sigma-\zeta},
	\end{aligned}
\end{equation}
where~$S_\sigma$ and~$S_\zeta$ are as in Fig.~\ref{contour4}. For large~$\nu$, which is the case, $S_\sigma$ can be deformed into that in Fig.~\ref{contour3}. Now, compare~\eqref{eq:thm25_proof_eq1} with~\eqref{eq:scaled_kern}. We see that overall, the former is a simplified version of the latter, with all the dependence on~$\ell$ or~$n$ erased. Then, the proof of Theorem~\ref{thm22} in Section~\ref{sect_42} goes through in the present case as well and is even simpler. In particular, we no longer need the condition~$n \ll \nu \ll \ell$. The artificial exponential gauge factors we used in \eqref{eq:scaled_kern} do not matter since~$x$ and~$y$ are in compacts of~$\R$. In fact, it is easy to extend the statement of the theorem to that of convergence of the corresponding gap probabilities. However, we do not pursue this here. The rest of the details is left to the reader.


\section*{Acknowledgements}
S.B. was supported by the Research Foundation -- Flanders (FWO), project 12K1823N ''New universal dynamical determinantal point processes associated with products of random matrices''. E.S. was supported by the BSF grant 2018248 ``Products of random matrices via the theory of symmetric functions''.



\begin{thebibliography}{19}
	\bibitem{Ahn1}{Ahn, A. Extremal singular values of random matrix products and Brownian motion on~$\mathrm{GL}(N,\mathbb{C})$. Probab. Theory Relat. Fields 187 (2023), 949--997.}

\bibitem{Ahn2}{Ahn, A. Fluctuations of $\beta$-Jacobi product processes. Probab. Theory Relat. Fields 183 (2022), 57--123.}

\bibitem{Akemann1}{Akemann, G.; Burda, Z. Universal microscopic correlation functions for products of independent Ginibre matrices. J. Phys. A: Math. Theor. {\bf 45} (2012),  465201.}

\bibitem{AkemannKieburgWei}{Akemann, G.; Kieburg M.; Wei, L. Singular value correlation functions for products of Wishart random matrices. J. Phys. A. {\bf 46} (2013), 275205.}

\bibitem{AkemannIpsenKieburg}{Akemann, G.; Ipsen, J.; Kieburg M. Products of rectangular random matrices: singular values and progressive scattering.  Phys. Rev. E {\bf 88} (2013), 052118.}

\bibitem{AndersonGuionnetZeitouni}{Anderson, G. W.; Guionnet, A.; Zeitouni, O. An Introduction to Random Matrices. Cambridge studies in advanced mathematics, 118, Cambridge University press, 2010.}

\bibitem{BaikDeiftSuidanBook}{Baik, J.; Deift, P.; Suidan, T. Combinatorics and random matrix theory. Graduate Studies in Mathematics, 172. American Mathematicsal Society, Providence, RI, 2016.}

\bibitem{BorodinGorinStrahov}{Borodin, A.; Gorin, V.; Strahov, E. Product Matrix Processes as Limits of Random Plane Partitions, Int. Math. Res. Not. 2019.}

\bibitem{BorodinForrester}{Borodin, A.; Forrester, P. Increasing subsequences and the hard-to-soft edge transition in matrix ensembles. J. Phys. A: Math. Gen. {\bf 36} (2003), 2963--2981.}

\bibitem{BorodinPeche}{Borodin, A.; P\'{e}ch\'{e}, S. Airy kernel with two sets of parameters in directed percolation and random matrix theory. J. Stat. Phys. 132 (2008), no. 2, 275–-290.}

\bibitem{Collins_phd}{Ben\^{o}\i{}t Collins. Int\'{e}grales matricielles et probabilit\'{e}s non-commutatives. Th\'{e}se de doctorat de l'Universit\'{e} Paris 6, 2003.}

\bibitem{CorwinHammond}{Corwin, I.; Hammond, A. Brownian Gibbs property for Airy line ensembles. Invent. math. 195 (2014), 441--508.}

\bibitem{JohansssonDiscretePolynuclearGrowth}{Johansson, K. Discrete polynuclear growth and determinantal processes. Comm. Math. Phys. 242 (2003), 277--329.}

\bibitem{JohanssonLastPassagePaper}{Johansson, K. On some special directed last-passage percolation models. Integrable systems and random matrices, 333–-346, Contemp. Math., 458, Amer. Math. Soc., Providence, RI, 2008.}

\bibitem{JohanssonTwoTimes}{Johansson,  K. The two-time distribution in geometric last-passage percolation. Probab. Theory Related Fields 175 (2019), no. 3--4, 849--895.}

\bibitem{JohanssonRahman}{Johansson, K.; Rahman, M. Multi-time distribution in discrete polynuclear growth. Comm. Pure Appl. Math. 74 (2021), no. 12, 2561--2627.}

\bibitem{KuijlaarsZhang}{Kuijlaars, A.B.J.; Zhang, L. Singular values of products of Ginibre random
  matrices, multiple orthogonal polynomials and hard edge scaling limits. Commun. Math. Phys.
  {\bf 332} (2014), 759--781.}

\bibitem{LiuWangWang}{Liu, D-Z.; Wang, D.; Wang, Y. Lyapunov Exponent, Universality and Phase Transition for Products of Random Matrices. Commun. Math. Phys. 399 (2023), 1811--1855.}

\bibitem{Luke} {Luke, Y.L. The special functions and their approximations. Academic Press, New York 1969.}

\bibitem{Macdonald}{Macdonald, I.~G. Symmetric Functions and Hall Polynomials. Oxford, 1995.}


\bibitem{PrahoferSpohn}{Pr\"{a}hofer, M.; Spohn, H. Scale Invariance of the PNG Droplet and the Airy Process. J. Statist. Phys. 108 (2002), 1071--1106.}

\bibitem{QuastelRemenik}{Quastel, J., Remenik, D. Airy processes and variational problems. In "Topics in Percolative and Disordered Systems". Springer Proceedings in Mathematics and Statistics, vol. 69 (2014). Springer, New York, NY.}

\bibitem{StrahovD}{Strahov, E. Dynamical correlation functions for products of random matrices. Random Matrices Theory Appl. 4 (2015), no. 4, 1550020.}

\bibitem{DLMF}{NIST Digital Library of Mathematical Functions. \verb+http://dlmf.nist.gov+ (release 1.1.1 of 2021-03-15).}
\end{thebibliography}
\end{document}